\newtheorem{theorem}{Theorem}
\newtheorem{lemma}{Lemma}
\newtheorem{proposition}{Proposition}
\newtheorem{assumption}{Assumption}
\newtheorem{definition}{Definition}
\newtheorem*{theorem*}{Theorem}
\theoremstyle{remark}
\newtheorem{remark}{Remark}
\newcommand{\xmath}[1]{\ensuremath{#1}\xspace}
\newcommand{\E}{\xmath{\mathbb{E}}}
\renewcommand{\Pr}{\xmath{\mathbb{P}}}
\newcommand{\evA}{\xmath{ S_n > b}}
\newcommand{\pn}{\xmath{n\bar{F}(b)}}
\newcommand{\Et}{\xmath{\mathbb{E}_1}}
\newcommand{\Pt}{\xmath{\mathbb{P}_1}}
\newcommand{\Eh}{\xmath{\mathbb{E}_{1}}}
\newcommand{\Ph}{\xmath{\mathbb{P}_{1}}}
\newcommand{\Eth}{\xmath{\mathbb{E}_{2}}}
\newcommand{\Pth}{\xmath{\mathbb{P}_{2}}}
\newcommand{\evk}{\xmath{n_{k-1} < \tau_b \leq n_k}}
\newcommand{\Fi}{\xmath{\bar{F}_I}}
\newcommand{\nkm}{\xmath{n_{k-1}}}
\newcommand{\pK}{\xmath{p_{_K}}}
\newcommand{\tn}{\xmath{\theta _{n}}}
\newcommand{\tnb}{\xmath{\theta _{n,b}}}
\newcommand{\gn}{\xmath{\gamma / \phi_n}}
\newcommand{\fin}{\xmath{\phi _n}}
\newcommand{\ega}{\xmath{e^{\gamma }}}
\newcommand{\thr}{\xmath{n^{\beta +\epsilon}}}
\newcommand{\Ad}{\xmath{A_{\textnormal{dom}}}}
\newcommand{\Ar}{\xmath{A_{\textnormal{res}}}}
\newcommand{\Zd}{\xmath{Z_{\textnormal{dom}}}}
\newcommand{\Zr}{\xmath{Z_{\textnormal{res}}}}
\newcommand{\Gb}{\xmath{\bar{G}^{(\beta)}}}
\newcommand{\Gib}{\xmath{\bar{G}^{(\beta)}_I}}
\begin{document}

\begin{frontmatter}

\title{State-independent Importance Sampling for Random Walks with
  Regularly Varying Increments}
\runtitle{State-independent IS for Heavy-tailed Random Walks}

\author{\fnms{Karthyek R. }\snm{A. Murthy,}\corref{}\thanksref{m1}\ead[label=e1]{kamurthy@tifr.res.in}}
\author{\fnms{Sandeep }\snm{Juneja}\thanksref{m1}\ead[label=e2]{juneja@tifr.res.in}}
\and
\author{\fnms{\newline Jose }\snm{Blanchet}\thanksref{m2}\ead[label=e3]{jose.blanchet@columbia.edu}}
\affiliation{Tata Institute of Fundamental Research\thanksmark{m1} and
  Columbia University\thanksmark{m2}}

\runauthor{Murthy, Juneja and Blanchet}

\address{School of Technology and Computer Science\\
Tata Institute of Fundamental Research\\
Mumbai, India\\
\printead{e1}\\
\phantom{E-mail:\ }\printead*{e2}}

\address{Department of Industrial Engineering\\ \quad  and Operations Research\\
  Columbia University\\ \quad In the city of New York\\
  \printead{e3}}

\begin{abstract}
  We develop importance sampling based efficient simulation techniques
  for three commonly encountered rare event probabilities associated
  with random walks having i.i.d. regularly varying increments;
  namely, 1) the large deviation probabilities, 2) the level crossing
  probabilities, and 3) the level crossing probabilities within a
  regenerative cycle. Exponential twisting based state-independent
  methods, which are effective in efficiently estimating these
  probabilities for light-tailed increments are not applicable when
  the increments are heavy-tailed.  To address the latter case, more
  complex and elegant state-dependent efficient simulation algorithms
  have been developed in the literature over the last few years.  We
  propose that by suitably decomposing these rare event probabilities
  into a dominant and further residual components, simpler
  state-independent importance sampling algorithms can be devised for
  each component resulting in composite unbiased estimators with
  desirable efficiency properties.  When the increments have infinite
  variance, there is an added complexity in estimating the level
  crossing probabilities as even the well known zero-variance measures
  have an infinite expected termination time. We adapt our algorithms
  so that this expectation is finite while the estimators remain
  strongly efficient.  Numerically, the proposed estimators perform at
  least as well, and sometimes substantially better than the existing
  state-dependent estimators in the literature.
\end{abstract}

\begin{keyword}[class=AMS]
\kwd[Primary ]{60G50, 60J05, 68W40}
\kwd[; secondary ]{60J20.}
\end{keyword}

\begin{keyword}
\kwd{State-independent importance sampling}
\kwd{rare-event simulation}
\kwd{heavy-tails}
\kwd{random walks}
\kwd{single-server queue}
\kwd{insurance ruin.}
\end{keyword}

\end{frontmatter}

\section{Introduction}
\label{SEC-INTRO}
In this paper, we develop importance sampling algorithms involving
simple, state-independent changes of measure for the efficient
estimation of large deviations, and level crossing probabilities of
random walks with regularly varying increments. Specifically, let
$X,X_1,X_2,\ldots$ denote a collection of independent and identically
distributed (i.i.d.)  random variables such that $\Pr\{X > x\} =
L(x)x^{-\alpha}$, for some $\alpha>1$ and a slowly varying
function\footnote{That is, $\lim_{x \rightarrow \infty} {L(tx)}/{L(x)}
  =1$ for any $t>0;$ see Section \ref{REG-VAR} for examples and more
  details} $L(\cdot)$. Note that $\alpha >2$ ensures finite variance
for $X$ whereas $\alpha <2$ implies that $X$ has infinite variance.
Set \[S_0 = 0, \quad S_n = X_1+\ldots+X_n, \text{ and } M_n = \max_{k
  \leq n} S_k, \text{ for } n \geq 1.\] Further, let $M := \sup_n
S_n,$ $\tau_b := \inf\{n \geq 1 : S_n > b\}$ and the regenerative
cycle duration $\tau := \inf\{ n \geq 1: S_n \leq 0\}.$ We are
interested in importance sampling based efficient estimation of:
 \begin{itemize}
 \item[1)] Large deviations probabilities $\Pr\{S_n - n \E X > b\}$
   for $b > \thr,$ with $\beta := (\alpha \wedge 2)^{-1}$ and any
   fixed $\epsilon > 0,$ as $n \nearrow \infty,$
 \item[2)] Level crossing probabilities $\Pr\{ \tau_b < \infty \},$ or
   equivalently, the tail probabilities $\Pr\{ M > b\},$ when $\E X <
   0$ and $b \nearrow \infty,$ 
 \item[3)] Level crossing probabilities within the regenerative cycle
   $\Pr\{\tau_b < \tau\},$ or equivalently, the tail probabilities of
   regenerative cycle maximum $\Pr \{ M_\tau > b\},$ when $\E X < 0,
   \alpha > 2$ and $b \nearrow \infty.$
 \end{itemize}
 Our methodology for estimating the large deviations probabilities
 easily extends to the efficient estimation of $\Pr\{S_N>u\}$ for
 random $N,$ when $N$ is light-tailed\footnote{As is well-known, $X$
   is light-tailed if the moment generating function $\E\left[
     \exp(\theta X)\right]$ is finite for some $\theta > 0,$ and is
   heavy-tailed otherwise.} and independent of increments $\{X_n\}$
 (popular in literature are $N$ fixed or geometrically distributed) as
 $u \nearrow \infty$ . However, in the interest of space, we do not
 explicitly consider the `random sum tail probabilities' estimation
 problem in this paper.

 Importance sampling via appropriate change of measure has been
 extremely successful in efficiently simulating rare events, and has
 been studied extensively in both the light and heavy tailed settings
 (see, e.g., \cite{MR2331321} for an introduction to rare event
 simulation and applications).  In importance sampling for random
 walks, state-dependence essentially means that the sampling
 distribution for generating the increment $X_k$ depends on the
 realized values of $X_1$, \ldots, $X_{k-1}$ (typically, through
 $S_{k-1}$); state-independence on the other hand implies that samples
 of $X_1, \ldots, X_n$ can be drawn independently. State-independent
 methods often enjoy advantages over state-dependent ones in terms of
 complexity of generating samples and ease of implementation.  The
 zero-variance changes of measure for estimating the large deviations
 and the level crossing probabilities are well known and are
 state-dependent (see, e.g., \cite{juneja2006rare}). While typically
 unimplementable, they provide guidance in search for implementable
 approximately zero-variance importance sampling techniques.

 In the light-tailed settings, large deviations analysis can be used
 to show that exponential twisting based state-independent importance
 sampling well approximates the zero-variance measures (see, e.g.,
 \cite{MR2331321}) for efficiently estimating the large deviations as
 well as the level crossing probabilities (see, e.g., \cite{MR1053850}
 and \cite{MR0418369}).  However, development of state-independent
 techniques for these probabilities is considered harder in the
 heavy-tailed settings. In \cite{ABH00}, Asmussen et al. provide an
 account of failure of simple large deviations based simulation
 methods that approximate zero-variance measure in heavy-tailed
 systems. Further, Bassamboo et al. \cite{MR2311409} prove that any
 importance sampling change of measure that prescribes increments to
 be drawn in an i.i.d. fashion cannot efficiently estimate
 probabilities of level crossing within a regenerative cycle of a
 heavy-tailed random walk.  The fact that the zero-variance measures
 for estimating both the large deviations and the level crossing
 probabilities are state-dependent, and the above mentioned negative
 results, have motivated research over the last few years in
 development of complex and elegant state-dependent algorithms to
 efficiently estimate these probabilities (see, e.g.,
 \citep{Dupuis:2007:ISS:1243991.1243995, MR2434174, MR2488534,
   Blanchet20122994, chan2012}).

 In this paper we introduce simple state-independent change of
 measures to estimate the large deviations and the level crossing
 probabilities with regularly varying increments.  We show that the
 proposed methods are provably efficient\footnote{We show that the
   estimators have asymptotically vanishing relative error; this
   corresponds to their coefficient of variation converging to zero as
   the event becomes rarer. We also have a related weaker notion of
   strong efficiency where the coefficient of variation of the
   estimators, and subsequently the number of i.i.d. replications
   required, remains bounded as the event becomes rarer.  Weak
   efficiency is another standard notion of performance in rare event
   simulation corresponding to a slow increase in the number of
   replications required as the event becomes rarer. These are briefly
   reviewed in Section \ref{sec:STR-EFF}.}  and perform at least as
 well as the existing state-dependent algorithms. Thus our key
 contribution is to question the prevailing view that one needs to
 resort to state-dependent methods for efficient computation of rare
 event probabilities involving `large number' of heavy-tailed random
 variables.  A key idea to be exploited in the estimation of
 probabilities considered is the fact that the corresponding rare
 event occurrence is governed by the ``single big jump'' principle,
 that is, the most likely paths leading to the occurrence of the rare
 event have one of the increments taking large value (see, for e.g.,
 \cite{MR2810144} and the references therein). Our approach for
 estimating the large deviations probability $\Pr\{S_n > b\}$ relies
 on decomposing it into a dominant and a residual component, and
 developing efficient estimation techniques for both. For estimating
 the level crossing probability $\Pr\{ \tau_b < \infty \}$, in
 addition to such a decomposition, we partition the event of interest
 into several blocks that are sampled using appropriate randomization.
 When the increments $X_n$ have infinite variance, there is an added
 complexity in estimating the level crossing probabilities $\Pr \{
 \tau_b < \infty\}$ as even the well known zero-variance measure is
 known to have an infinite expected termination time. We modify our
 algorithms so that this expectation remains finite while the
 estimators remain strongly efficient although they may no longer have
 asymptotically vanishing relative error.

Our specific contributions are as follows:

 \begin{enumerate}
 \item We provide importance sampling estimators that achieve
   asymptotically vanishing relative error for the estimation of
   $\Pr\{ S_n > b\},$ as $n \nearrow \infty.$ Given $n$ and $\epsilon
   > 0,$ our simulation methodology is uniformly efficient for values
   of $b$ larger than $n^{\frac{1}{2} + \epsilon}$ when the increments
   $X_n$ have finite variance, and for $b > n^{\frac{1}{\alpha} +
     \epsilon}$ in the case of increments having infinite variance --
   thus operating throughout the large deviations regime where the
   well-known asymptotics $\Pr \{ S_n > b\} \sim n\bar{F}(b)$
   hold. Further, this is the first instance that we are aware of
   where efficient simulation techniques for the large deviations
   probability include the case of increments having infinite
   variance, which is not uncommon in practical applications involving
   heavy-tailed random variables.

 \item For $\alpha >1$, we develop unbiased estimators for level
   crossing probabilities $\Pr\{\tau_b < \infty\}$ that achieve
   vanishing relative error as $b \nearrow \infty.$ These estimators
   require an overall computational effort that scales as $O(b)$ when
   the variance of increments $X_n$ is finite.  This is similar to the
   complexity of the zero variance operator since, as is well known,
   the latter requires order $\E [\tau_b | \tau_b < \infty]$
   computation in generating a single sample and this is known to be
   linear in $b$ when the variance of increments is finite (see
   \cite{Asmussen1996103}).  However, since $\E [\tau_b | \tau_b <
   \infty] =\infty$ for the case of increments having infinite
   variance, the zero-variance change of measure might not necessarily
   be a good benchmark, because from a computational standpoint any
   useful estimator needs to have finite expected termination time.
   For random walks with increments having infinite variance, we
   develop algorithms such that:
   \begin{enumerate}[(a)]
   \item When $\alpha > 1.5,$ the associated estimators are strongly
     efficient and have $O(b)$ expected termination time. As a
     converse, we also prove that for $\alpha < 1.5$ no algorithm can
     be devised in our framework that has both the variance and
     expected termination time simultaneously finite. The situation is
     more nuanced when $\alpha=1.5$ and depends on the form of the
     slowly varying function $L(\cdot)$.

   \item When $\alpha \leq 1.5,$ each replication of the estimator
     terminates in $O(b)$ time in expectation; also we require only
     $O(1)$ replications to achieve a given relative error, thus
     resulting in overall complexity of $O(b).$ 
   \end{enumerate}
   The above results for infinite increment variance, and in
   particular the bottleneck arising at $\alpha = 1.5$, closely mirror
   the results proved in \cite{Blanchet20122994} where vastly
   different state-dependent algorithms are considered.
 \item Similarly, for the level crossing probabilities $\Pr \{ \tau_b
   < \tau\},$ we partition the event into dominant and residual
   components, and devise changes of measure separately for the
   component events. The resulting importance sampling estimators are
   proved to be strongly efficient, as $b \nearrow \infty.$ This is
   significant considering the negative result of
   \cite{DBLP:journals/ior/BassambooJZ08} in context, where it is
   proved that no state-independent change of measure can be devised
   to efficiently simulate $\{ \tau_b < \tau \}.$ Our analysis thus
   informs that decomposing the event of interest in a suitable manner
   may be a reasonable way to address problems where designing
   importance sampling measures are known to be difficult.
\end{enumerate}

A brief discussion on practical applications and a literature review
may be in order: Efficient estimation of the level crossing
probability is important in many practical contexts, e.g., in
computing steady state probability of delays in $GI/GI/1$ queues and
in ruin probabilities in insurance settings (see, e.g.,
\cite{MR2331321}). Siegmund \cite{MR0418369} provides the first weakly
efficient importance sampling algorithm for estimating the level
crossing probabilities when the increments $X_n$ are light-tailed
using large deviations based exponentially twisted change of measure.
In \cite{MR1053850}, Sadowsky and Bucklew develop a weakly efficient
algorithm for estimating $\Pr( S_n > na) \text{ for } a > \E X,$ and
$X$ light-tailed, again using exponential twisting based importance
sampling distribution (also see \citep{MR1398051, MR2100018,
  MR2743897, MR2144566, agarwal2013} for related analysis).  This
problem is important mainly because it forms a building block to many
more complex rare event problems involving combination of renewal
processes: for examples in queueing, see \cite{MR970932} and in
financial credit risk modeling, see
\cite{DBLP:journals/mansci/GlassermanL05} and
\cite{DBLP:journals/ior/BassambooJZ08}.  

Research on efficient simulation of rare events involving heavy-tailed
variables first focussed on probabilities such as $\Pr \{S_N > b\}$ in
the simpler asymptotic regime where $N$ is fixed or geometrically
distributed and $b \nearrow \infty$.  In this simpler setting
state-independent algorithms are easily designed (see, e.g.,
\cite{ABH00,Juneja:2002:SHT:566392.566394,AK06}).  In \cite{6376313},
it is shown that a variant capped exponential twisting based
state-independent importance sampling, which does not involve any
decomposition, provides a strongly efficient estimator for the large
deviations probability that we consider in this paper. 

Statistical analysis reveals that heavy-tailed distributions are very
common in practice: in particular, heavy-tailed increments with
infinite variance are a convenient means to explain the long-range
dependence observed in tele-traffic data, and to model highly variable
claim sizes in insurance settings. Popular references to this strand
of literature include \cite{EKM97, RES97, MR1652283}.

The organization of the remaining paper is as follows: In Section
\ref{SEC-PRELIMS} we discuss preliminary concepts relevant to the
problems addressed. We propose our importance sampling method for
estimating the large deviations probability and prove its efficiency
in Section \ref{SEC-LD}.  In Section \ref{SEC-SIM-METH}, we develop
algorithms for estimating the level crossing probabilities $\Pr \{
\tau_b < \infty\}$.  Proofs of some of the key results pertaining to
efficiency and expected termination time of algorithms proposed in
Section \ref{SEC-SIM-METH} are presented in Section
\ref{PROOFS-KEY-RES}. The efficient simulation of level crossing
within a regenerative cycle is considered in Section
\ref{SEC-BCYC-SIM}.  Numerical experiments supporting our algorithms
are given in Section \ref{SEC-NUM-EG} followed by a brief conclusion
in Section \ref{SEC-CONC}.  Some of the more technical proofs are
presented in the appendix.

\section{Preliminary Background}
\label{SEC-PRELIMS}
In this section we briefly review the use of importance sampling in
estimating rare event probabilities.  We use Landau's notation for
describing asymptotic behaviour of functions: for given functions
$f:\mathbb{R}^+ \rightarrow \mathbb{R}^+$ and $g:\mathbb{R}^+
\rightarrow \mathbb{R}^+,$ we say $f(x) = O(g(x))$ if there exists
$c_1 > 0$ and $x_1$ large enough such that $f(x) \leq c_1g(x)$ for all
$x > x_1$; and $f(x) = \Omega(g(x))$ if there exists $c_2 > 0$ and
$x_2$ large enough such that $f(x) \geq c_2g(x)$ for all $x > x_2.$ We
use $f(x) = o(g(x))$ if $f(x)/g(x) \rightarrow 0,$ and $f(x) \sim
g(x)$ if $f(x)/g(x) \rightarrow 1,$ as $x \nearrow \infty.$ Throughout
this paper, if a probability measure is specified with a suffix, the
expectation and variance operators evaluated with respect to that
measure are specified with the same suffix. For example, $\E_n[\cdot]$
and $\text{Var}_n[\cdot]$ denote expectation and variance operators
associated with the measure $\Pr_n(\cdot).$

\subsection{Rare event simulation and importance sampling}
\label{sec:IS}
Let $A$ denote a rare event on the probability space $( \Omega,
\mathcal{F}, \Pr )$, i.e., $z:= \Pr(A) >0$ is small (in our setup $A$
corresponds to the events $\{S_n>b\}$ or $\{ \tau_b < \infty \}$).
Suppose that we are interested in obtaining an estimator $\hat{z}$ for
$z$ such that the relative error $|\hat{z} - z|/z $ is not more than
$\epsilon,$ with probability at least $1 - \delta,$ for given
$\epsilon$ and $\delta > 0.$ Naive simulation for estimating $z$
involves drawing $N$ independent samples of the indicator
$\mathbb{I}(A)$ and taking their sample mean as the estimator. For a
different measure $\Pt(\cdot)$ such that the Radon-Nikodym derivative
$d \Pr/d \Pt$ is well defined on $ A,$ we have:
\begin{equation*}
  \Pr(A) = \int _{ A } \frac{d \Pr}{d \Pt} (\omega
  ) d \Pt (\omega ) = \Et \left[ L \mathbb{I}_{A} \right],
\end{equation*}
where $L := d \Pr/d \Pt$ and $\Et[\cdot]$ is the expectation
associated with $\Pt(\cdot).$ Define $Z := L\mathbb{I}(A);$ then $Z$
is an unbiased estimator of $z$ under measure $\Pt(\cdot).$ If $N$
i.i.d samples $Z_1,\ldots,Z_N$ of $Z$ are drawn from $\Pt(\cdot),$
then by the strong law of large numbers we have:
\begin{equation*}
  \hat{z}_{_N} := \frac{Z_1 + \ldots + Z_N}{N}
  \rightarrow z \text{ a.s.},
\end{equation*} as $N \nearrow \infty.$
This method of arriving at an estimator is called \textit{importance
  sampling} (IS). The measure $\Pt(\cdot)$ is called the importance sampling
measure and $Z$ is called an importance sampling estimator. Using
Chebyshev's inequality allows us to find an upper bound on the number
of replications $N$ required to achieve the desired relative
precision:
\begin{align*}
  \Pr \left( \frac{|\hat{z}_{_N} - z|}{z} > \epsilon \right) \leq
  \frac{ \text{Var}_1 [\hat{z}_{_N}]}{z^2 {\epsilon }^2} =
  \frac{CV^2(Z)}{N \epsilon ^2}.
\end{align*}
Here $\text{Var}_1[\cdot] := \E_1[(\cdot)^2] - \E_1[\cdot]^2$ is the
variance operator associated with measure $\Pt(\cdot)$ and $CV(Z) =
\sqrt {\text{Var}_1[Z]}/z$ is the coefficient of variation of $Z.$
This enables us to conclude that if we generate at least
\begin{equation}
  N = \frac{CV^2(Z)}{\delta \epsilon ^2}
\label{NSAMP}
\end{equation}
i.i.d. samples of $Z$ for computing $\hat{z}_{_N}$, we can guarantee
the desired relative precision. In naive simulation we use the measure
$\Pr(\cdot)$ itself and have $Z = \mathbb{I}(A)$ as the estimator; so
the number of samples required in (\ref{NSAMP}) grows (roughly
proportional to $z^{-1}$) to infinity if $z \searrow 0.$ As is well
known, the choice $\Pr^*(\cdot) := \Pr (\cdot | A)$ as an importance
sampling measure yields zero variance for the associated estimator $Z
= z\mathbb{I}(A)$ (see e.g., \cite{MR2331321}); then every sample
obtained in simulation equals $z$ with $\Pr^*(\cdot)$ probability
1. However, the explicit dependence of $Z$ on $z,$ the quantity which
we want to estimate, makes this method impractical.

\subsection{Efficiency notions of algorithms}
\label{sec:STR-EFF} Consider a family of events $\{A_n: n \geq 1\}$
such that $z_n := \Pr (A_n) \searrow 0$ as the rarity parameter $n
\nearrow \infty.$ For an importance sampling algorithm to compute
$(z_n: n \geq 1),$ we come up with a sequence of changes of measure
$(\Pr_n(\cdot): n \geq 1)$ and estimators $( Z_n: n \geq 1 )$ such
that $\E_n Z_n = z_n.$ 
\begin{definition}
  The sequence $(Z_n: n \geq 1)$ of unbiased importance sampling
  estimators of $\{z_n: n \geq 1\}$, is said to achieve
  \textit{asymptotically vanishing relative error} if,
\begin{equation}
  \varlimsup _{ n \rightarrow \infty} \frac{\textnormal{Var}_n
    \left[ Z_n \right]}{z_n^2} = 0.
\end{equation} The sequence $(Z_n: n \geq 1)$ is said to be
\textit{strongly efficient} if,
\begin{equation}
  \sup_{n} \frac{\textnormal{Var}_n \left[
      Z_n \right]}{z_n^{2}} < \infty,
\end{equation}
and \textit{weakly efficient} if for all $\epsilon > 0,$
\begin{equation}
  \varlimsup_{ n \rightarrow \infty} \frac{\textnormal{Var}_n \left[
      Z_n  \right]}{z_n^{2-\epsilon}} = 0.
\end{equation}
\end{definition}
The significance of these definitions can be seen from \eqref{NSAMP}:
if an algorithm is strongly efficient, the number of simulation runs
required to guarantee the desired relative precision stays bounded as
$n \nearrow \infty$. If $\text{Var}(Z_n) = o\left(z^2_n\right),$ then
$(Z_n: n \geq 1)$ satisfies asymptotically vanishing relative error
property. As a result, it is enough to generate
$o(\delta^{-1}\epsilon^{-2})$ i.i.d. replications of the estimator. As
is apparent from the definition, all strongly efficient algorithms are
weakly efficient, and vanishing relative error is the strongest notion
among all three. Also it can be verified that naive
simulation is not even weakly efficient.

\subsection{Regularly varying tails}
\label{REG-VAR}
A function $L:\mathbb{R}^+ \rightarrow \mathbb{R}^+$ is said to be
\textit{slowly varying} at infinity if
\[\lim _{x \rightarrow \infty} \frac{L(tx)}{L(x)} =
1, \text{ for all } t > 0.\] Some examples of slowly varying functions
include $|\log x|^\beta \text{ for any } \beta \in \mathbb{R}, 1 -
e^{-x},$ etc. A random variable $X$ is said to be \textit{regularly
  varying} with index $-\alpha$ if for each $t > 0,$
\begin{equation*}
\lim_{x \rightarrow \infty} \frac{\Pr\{ X > tx \}}{ \Pr \{ X > x\}}
= t^{-\alpha}.
\label{RV-RV}
\end{equation*}
In other words, $\Pr\{ X > x\} = x^{-\alpha}L(x)$ for some slowly
varying function $L(\cdot).$ It can be easily verified that any
regularly varying random variable $X$ is heavy-tailed: that is,
$\E[\exp(\theta X)] = \infty$ for any $\theta > 0.$ These regularly
varying distribution functions capture the concept of polynomially
decaying tails, and form an important class of heavy-tailed
distributions. The following properties of regularly varying functions
will be useful in our analysis:\\\\
1) \textit{Karamata's theorem}: For any regularly varying function
$V(\cdot)$ with index $-\alpha,$ if $\beta$ is such that $\alpha -
\beta > 1,$ then
\begin{equation}
  \int_x^\infty u^\beta V(u)du \sim
  \frac{x^{\beta+1}V(x)}{\alpha-\beta-1}, \text{ as } x \nearrow
  \infty.
  \label{KARAMATA}
\end{equation}
This result, a part of Karamata's theorem (cf. Theorem 1 in Chapter
VIII.9 of \cite{feller1971introduction}), provides an asymptotic
characterization of integrated tails.\\\\
2) \textit{Potter's bounds}: Potter's bounds: If $L(\cdot)$ is a
slowly varying function, then as in Theorem 1.1.4 of \cite{MR2424161},
for any $\delta > 0,$ there exists a $t_\delta > 0$ such that for all
$t$ and $v$ satisfying $t \geq t_\delta$ and $vt \geq t_\delta,$
    \begin{equation}
      (1-\delta) \min \{v^\delta, v^{-\delta}\} \leq \frac{L(vt)}{L(t)}
      \leq (1+\delta)\max \{v^\delta, v^{-\delta}\}
      \label{LONG-TAIL-EXT}
    \end{equation}

\section{Simulation of $\{ \evA \}$}
\label{SEC-LD}
Let $X$ be a zero mean random variable with distribution $F(\cdot)$
satisfying the following:
\begin{assumption}
  The tail probabilities of $X$ are given by $\bar{F}(x) := \Pr \{ X >
  x \} = x^{-\alpha}L(x),$ for some slowly varying function $L(\cdot)$
  and $\alpha > 1.$ Further if $\text{Var}[X] = \infty,$ the tail
  probabilities of $X$ satisfy the following condition:
  \[\varlimsup_{x \rightarrow \infty} \frac{\Pr \{X < -x\}}{\Pr \{ X >
    x\}} < \infty.\]
\label{LEFT-TAIL-LIGHTER-ASSUMP} 
\end{assumption}
\noindent For the independent collection $(X_n: n \geq 1)$ of random
variables which are distributed identically as $X,$ define the random
walk $(S_n: n \geq 0)$ as below:
\[ S_0 = 0, \text{ and } S_n = X_1 + \ldots + X_n \text{ for } n \geq
1.\] In this section we devise a simulation procedure for estimating
the large deviation probabilities $\Pr\{ \evA \}$ 
and prove its efficiency as $n \nearrow \infty.$
For accomplishing this, we quickly review the following well-known
asymptotics: Let $\beta := (\alpha \wedge 2)^{-1}.$ When $\E X^2 <
\infty,$ we have that
\begin{align*}
  \frac{n\log n}{b^2} \int_{|x| \leq b} x^2F(dx) &\leq \frac{n \log
    n}{b^2} \E X^2 \searrow 0,\text{ as } n \nearrow \infty
\end{align*}
uniformly for $b > n^{\frac{1}{2} + \epsilon}.$ Similarly when $\E X^2
= \infty,$ since $\E X = 0,$ it follows from Assumption
\ref{LEFT-TAIL-LIGHTER-ASSUMP} that
\begin{align*}
  \frac{n}{b} \int_{|x| \leq b} xF(dx) &= - \frac{n}{b} \int_{|x|
    >  b} xF(dx)\\
  &= \frac{n}{b} \left( \int_{x < -b} |x|F(dx) + \int_{x > b}
    |x|\bar{F}(dx)\right)\\
  &\leq \frac{2n}{b} \left(\int_{x > b} x\bar{F}(dx)\right)(1+o(1))\\
  &= \frac{2n}{b} \left(b\bar{F}(b) + \int_b^\infty
    \bar{F}(u)du\right) (1+o(1))\\
  &= \frac{2n}{b} \left( b\bar{F}(b) + \frac{b\bar{F}(b)}{\alpha - 1}
  \right) (1+o(1)) \searrow 0, \text{ as } n \nearrow \infty,
\end{align*}
uniformly for $b > n^{\beta + \epsilon}.$ Then it follows from Theorem
3.3 of \cite{cline1991large} that for any $\epsilon > 0,$
\begin{equation}
  \sup_{ b > n^{\beta + \epsilon}} \left| \frac{\Pr \{ \evA \}}{\pn} -
    1 \right| = o(1), \text{ as } n \nearrow \infty.
  \label{LD-PROBS}
\end{equation}
A simple application of Bonferroni inequalities will yield 
\begin{equation}
  \Pr \{\max\{X_1,\ldots,X_n\} > b\} = n\bar{F}(b)\left( 1 -
    \frac{1}{2}\bar{F}(b) + \frac{\theta}{6}\left(
      n\bar{F}(b)\right)^2\right), 
  \label{MAX-BONF-INEQ}
\end{equation}
for some $\theta$ in $(0,1).$ This indicates that the tail asymptotics
of maximum and the sum of increments $\{ X_1,\ldots,X_n\}$ match
asymptotically.


The strategy for simulation is to partition the event $\{ S_n > b \}$
based on whether the maximum of the increments $\{ X_1, \ldots, X_n\}$
has exceeded the large value $b$ or not:
\begin{align*}
 \Ad(n,b) &:= \left\{ \evA, \max_{k \leq n} X_k \geq b\right\} \text{
  and }\\
\Ar(n,b) &:= \left\{ \evA, \max_{k \leq n} X_k < b\right\}.
\end{align*}
Such a partition is considered in \cite{DBLP:journals/questa/Juneja07}
for the simulation of $\{ S_n>b \}$ when $n$ is fixed. We prove the
following result in the appendix:
\begin{proposition}
  Under Assumption \ref{LEFT-TAIL-LIGHTER-ASSUMP}, given any
  $\epsilon > 0,$
  \[ \sup_{b > n^{\beta + \epsilon}} \frac{\Pr
    \left(\Ar(n,b)\right)}{n \bar{F}(b)} = o(1) \text{ as } n \nearrow
  \infty.\]
  \label{LD-RES-PROB}
\end{proposition}
Therefore, the probability of the event $\Ar$ is vanishingly small
compared to the probability of $\Ad$ as $n \nearrow \infty;$ the
suffixes stand to indicate that $\Ad$ is the dominant way of
occurrence of $\{\evA\}$ for large $n,$ and $\Ar$ has only residual
contributions. We estimate $\Pr(\Ad)$ and $\Pr(\Ar)$ independently via
different changes of measure that typify the way in which the
respective events occur, and add the individual estimates to arrive at
the final estimator for $\Pr\{\evA\}.$

\subsection{Simulating $\Ad$}
\label{SUBSEC-SIM-AD}
For the simulation of $\Ad,$ we follow the two-step procedure outlined
in \cite{chan2012}:
\begin{enumerate}
\item Choose an index $I$ uniformly at random from $\{1,\ldots,n\}$
\item For $k=1,\ldots,n,$ generate a realization of $X_k$ from
  $F(\cdot|X_k \geq b)$ if $k=I;$ otherwise, generate $X_k$ from
  $F(\cdot)$
\end{enumerate}
Let $\Ph(\cdot)$ denote the measure induced when the increments are
generated according to the above procedure; for brevity, we have
chosen not to highlight the dependence of the importance sampling
change of measure $\Ph(\cdot)$ on $n$ and $b$ in the
notation. 
Note that the probability measure $\Pr(\cdot)$ is
absolutely continuous with respect to $\Ph(\cdot)$ when restricted to
$\Ad.$ We have,
\begin{align*}
  d\Ph\left( x_1,\ldots,x_n\right) &= \sum_{k=1}^n
  \frac{1}{n}\cdot\frac{dF(x_1)\ldots
    dF(x_n)}{\bar{F}(b)}\mathbf{1}(x_k \geq b).
\end{align*}
Therefore the likelihood ratio on the set $\Ad$ is given by,
\[ \frac{d\Pr}{d\Ph}(X_1,\ldots,X_n) = \frac{n\bar{F}(b)}{\#\{X_i \geq
  b: 1 \leq i \leq n\}},\] and the resulting unbiased estimator for
the evaluation of $\Pr(\Ad)$ is,
\begin{equation}
  \Zd(n,b) := \frac{n\bar{F}(b)}{\#\{X_i \geq b: 1 \leq i \leq
    n\}}\mathbb{I}(\Ad).
  \label{EST-DOM}
\end{equation}
Generate $N$ independent realizations of $\Zd$ and take their sample
mean as an estimator of $\Pr(\Ad).$ To evaluate how large $N$ should
be chosen so that the computed estimate satisfies the given relative
error specification, we need to obtain bounds on the variance of
$\Zd.$ Since $\#\{X_i \geq b: 1 \leq i \leq n\}$ is at least 1, when
the increments are drawn following the measure $\Ph(\cdot),$ we have:
$\Zd(n,b) \leq \pn,$ and hence,
\[ \Eh \left[ \Zd^2(n,b) \right] \leq \left( n\bar{F}(b) \right)^2.\]
Also $\Eh \left[ \Zd(n,b) \right] = \Pr(\Ad(n,b)) \sim \Pr\{\evA\}
\sim \pn,$ as $n \nearrow \infty.$ Therefore we get,
\begin{equation}
  \text{Var}_1\left[\Zd(n,b)\right] = o\left( \left( n\bar{F}(b)
    \right)^2 \right), \text{ as } n \nearrow \infty.
  \label{VAR-DOM}
\end{equation}

\begin{remark}
  Since $\Pr\{S_n > b, M_n>b\} = n\Pr\{S_n > b, M_n>b,M_n=X_1\},$ one
  can estimate $\Pr\{S_n > b, M_n>b,M_n=X_1\}$ efficiently by
  simulating $X_1$ from $F(\cdot|X_1 > b)$ and the other increments
  from $F(\cdot).$ This avoids the simulation of an additional random
  variable $I.$ However, we have presented the two step procedure
  above so that the simulation procedures introduced later in the
  paper appear intuitive.
\label{WHY-GEN-I}
\end{remark}

\begin{remark}
  If the increments $X_1,\ldots,X_n$ are not identically distributed,
  and if at least one of the increments is regularly varying, then it
  can be verified that the following modification to the simulation of
  auxiliary random variable $I$ would suffice: Say $X_j \sim
  F_j(\cdot).$ Then choose $I=i$ from $\{1,\ldots,n\}$ with
  probability $\bar{F}_i(b)/\sum_{j=1}^n\bar{F}_j(b).$
  \label{NON-IDENT-INC}
\end{remark}

\subsection{Simulating $\Ar$}
\label{SUBSEC-SIM-AR}
We see that all the increments $\{ X_1,\ldots, X_n\}$ are bounded from
above by $b$ on the occurrence of event $\Ar.$ Though the bound on the
increments vary with $n,$ we can employ methods similar to exponential
twisting of light-tailed random walks to simulate the event $\Ar,$ as
illustrated in this section. For given $b,$ define
\begin{align*}
\Lambda_b(\theta) &:= \log
  \left(\int_{-\infty}^{b}\exp(\theta x)F(dx) \right), \quad
  \theta \geq 0.
\end{align*}
Since the upper limit of integration is $b,$ $\Lambda_b(\cdot)$ is
well-defined for any positive value of $\theta.$ For given values of
$n$ and $b,$ consider the distribution function $F_{\theta}(\cdot)$
satisfying,
\begin{align*}
  \frac{dF_{\theta}(x)}{dF(x)} = \exp (\tnb x -
  \Lambda_b(\tnb))\mathbf{1}(x < b),
\end{align*}
for all $x \in \mathbb{R}$ and some $\tnb > 0.$ Now the prescribed
procedure is to just obtain independent samples of the increments
$\{X_1,\ldots,X_n\}$ from $F_{\theta}(\cdot)$ and adjust via the
likelihood ratio resulting due to the procedure of sampling from a
different distribution $F_{\theta}(\cdot).$

Let $\Pth(\cdot)$ denote the measure induced by sampling increments
i.i.d from $F_{\theta}(\cdot).$ As before, for brevity, we have chosen
not to highlight the dependence on parameters $n$ and $b$ in the
notations $F_{\theta}(\cdot)$ and $\Pth(\cdot).$ 
For given values of $n$ and $b,$ we have the following unbiased
estimator for the computation of $\Pr(\Ar):$
\begin{align}
  \Zr(n,b) := \exp\left(-\tnb S_n + n\Lambda_b(\tnb)\right)
  \mathbb{I}(\Ar).
  \label{EST-RES}
\end{align}
Now generate independent replications of $\Zr$ and take their sample
mean as the computed estimate for $\Pr(\Ar).$ However it remains to
choose $\tnb.$ Since $S_n$ is larger than $b$ on $\Ar,$
\begin{align*}
  \Zr(n,b) \leq \exp\left(-\tnb b + n\Lambda_b(\tnb)\right)
  \mathbb{I}(\Ar).
\end{align*}
If we choose
\begin{align}
  \tnb &:= -\frac{\log\left(\pn\right)}{b}, \text{ then } \label{THETA-N}\\
  \quad \Zr(n,b) &\leq \pn \exp\left( n\Lambda_b(\tnb)\right)
  \mathbb{I}(\Ar). \label{RES-EST-BND}
\end{align}
We use Lemma \ref{LEM-LD}, which is proved in the appendix, to obtain
an upper bound on the second moment of the estimator $\Zr.$
\begin{lemma}
  Under Assumption \ref{LEFT-TAIL-LIGHTER-ASSUMP}, for the choice of
  $\tnb$ as in \eqref{THETA-N},
  \[ \exp \left( \Lambda_b(\tnb) \right) \leq
  1+\frac{1}{n}(1+o(1)), \] as $n \nearrow \infty,$ uniformly for $b >
  \thr.$
  \label{LEM-LD}
\end{lemma}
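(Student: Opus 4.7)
Write $p := \pn$ and $\lambda := \tnb\cdot b = -\log p$; under $b > \thr$ we have $p \to 0$ and $\lambda \to \infty$. Using $\E[X]=0$,
$$\exp(\Lambda_b(\tnb)) = \E\bigl[e^{\tnb X}\mathbf{1}(X<b)\bigr] = 1 - \bar{F}(b) - \tnb\,\E[X\mathbf{1}(X\geq b)] + \E\bigl[g(X)\mathbf{1}(X<b)\bigr],$$
where $g(x) := e^{\tnb x}-1-\tnb x\geq 0$. Karamata \eqref{KARAMATA} gives $\E[X\mathbf{1}(X\geq b)] = O(b\bar{F}(b))$, so the third term is $O(\lambda\bar{F}(b)) = O(p|\log p|)/n = o(1/n)$, while $\bar{F}(b)=p/n = o(1/n)$. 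It thus suffices to show $\E[g(X)\mathbf{1}(X<b)] \leq (1+o(1))/n$.

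\textbf{Moderate $X$.} Split on $|X| \leq 1/\tnb$ and $X \leq -1/\tnb$. For $|X| \leq 1/\tnb$, the elementary inequality $0 \leq g(x) \leq (e/2)\tnb^2 x^2$ gives the bound $(e/2)\tnb^2\E[X^2\mathbf{1}(|X|\leq 1/\tnb)]$. In the finite-variance case this is $O(\tnb^2)$, hence $o(1/n)$ using $b > n^{1/2+\epsilon}$; in the infinite-variance case, Karamata (combined with Assumption~\ref{LD-ASSUMP} bounding the left tail by a constant times the right) gives $\E[X^2\mathbf{1}(|X|\leq 1/\tnb)] \asymp \tnb^{-2}\bar{F}(1/\tnb)$, reducing the bound to $O(\bar{F}(1/\tnb)) = o(1/n)$ under $b > n^{1/\alpha+\epsilon}$. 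For $X \leq -1/\tnb$, use $g(X)\leq -\tnb X$ together with the same Karamata-plus-Assumption~\ref{LD-ASSUMP} argument to obtain $\tnb\E[|X|\mathbf{1}(X\leq -1/\tnb)] = O(\bar{F}(1/\tnb)) = o(1/n)$.

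\textbf{Tail region $1/\tnb < X < b$.} On this set $g(X) \leq e^{\tnb X}$. Lebesgue--Stieltjes integration by parts gives
$$\int_{1/\tnb}^b e^{\tnb x}dF(x) = e\,\bar{F}(1/\tnb) - \tfrac{1}{n} + \tnb\int_{1/\tnb}^b e^{\tnb x}\bar{F}(x)\,dx,$$
using $e^{\tnb b}\bar{F}(b)=1/n$. The substitution $y=\tnb(b-x)$ rewrites the last integral as
$$\frac{1}{n}\int_0^{\lambda-1}e^{-y}\,\frac{\bar{F}\!\bigl(b(1-y/\lambda)\bigr)}{\bar{F}(b)}\,dy.$$
The crux is to show this integral tends to $1$ as $\lambda\to\infty$. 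Split $[0,\lambda-1]$ into $[0,M]$, $[M,\lambda/2]$, and $[\lambda/2,\lambda-1]$ for fixed $M$: on the first, uniform convergence of $\bar{F}(bs)/\bar{F}(b)\to s^{-\alpha}$ near $s=1$ yields a contribution $\to 1-e^{-M}$; on the second, Potter's bound forces the ratio $\leq C$, contributing $O(e^{-M})$; on the third, Potter gives ratio $\leq C\lambda^{\alpha+\delta}$, but the prefactor $e^{-\lambda/2}$ forces the contribution to $0$. Letting $M\to\infty$ after $\lambda\to\infty$ proves the claim. Combining, $\E[g(X)\mathbf{1}(1/\tnb<X<b)] \leq e\bar{F}(1/\tnb) + o(1/n) = o(1/n)$, which yields even $\exp(\Lambda_b(\tnb)) \leq 1 + o(1/n)$, comfortably within the stated bound.

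The principal difficulty is the Laplace-type convergence in the tail region: $e^{-y}$ concentrates near $y=0$ while the ratio $\bar{F}(b(1-y/\lambda))/\bar{F}(b)$ blows up as $y\to\lambda$, and Potter's inequality must be deployed to quantitatively balance these two effects. Uniformity over $b > \thr$ is automatic because every error estimate depends on $b$ only through $p$ and $\lambda$.
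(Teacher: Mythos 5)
Your argument is essentially correct and in fact yields a slightly sharper conclusion than the lemma asserts: by retaining the negative boundary term $-e^{\tnb b}\bar F(b)=-1/n$ in the integration by parts, you get $\exp(\Lambda_b(\tnb))\leq 1+o(1/n)$, whereas the paper discards that term and obtains $1+(1+o(1))/n$. The broad architecture matches the paper's (exploit $\E X=0$ to remove the linear term, bound a moderate region via a Taylor-type estimate, and treat the upper-tail region by integration by parts, the change of variable $y=\tnb(b-x)$, and a Potter/Karamata dominated-convergence argument to show the resulting integral tends to $1$), but the technical implementation is genuinely different. The paper proves a separate auxiliary lemma bounding $\int_{-\infty}^{x_n}e^{\phi_n x}\,dF(x)$ by means of a fractional Taylor-type inequality involving $|x|^\kappa$ for a single exponent $\kappa\in(1,\alpha\wedge 2)$, which handles the finite- and infinite-variance cases uniformly because $\E|X|^\kappa<\infty$ in both; it then supplies two rate estimates ($n\tnb^\kappa\to 0$ and $n\bar F(2\alpha/\tnb)\to 0$, uniformly in $b>\thr$). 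You instead use the exact decomposition $e^y=1+y+g(y)$ and a two-pronged bound $g(y)\leq Cy^2$ for $|y|\leq 1$ and $g(y)\leq|y|$ for $y\leq -1$, covering the infinite-variance case by truncating at $1/\tnb$ and invoking Karamata for $\E[X^2\mathbf{1}(|X|\leq T)]$ together with Assumption~\ref{LD-ASSUMP}. Your route is a perfectly valid and arguably more transparent alternative, at the small cost of a finite/infinite-variance case split; the paper's $|x|^\kappa$ device is a neat unifying shortcut that avoids that split.

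One small gap should be repaired. For the region $X\leq -1/\tnb$ you write that ``the same Karamata-plus-Assumption~\ref{LD-ASSUMP} argument'' gives $\tnb\,\E[|X|\mathbf{1}(X\leq -1/\tnb)]=O(\bar F(1/\tnb))$, but Assumption~\ref{LD-ASSUMP} controls the left tail only when $\textnormal{Var}[X]=\infty$. In the finite-variance case that hypothesis is unavailable, and you should instead mirror your treatment of the moderate region: by Markov's inequality,
\[
\tnb\,\E\bigl[|X|\mathbf{1}(X\leq -1/\tnb)\bigr]\;\leq\;\tnb^2\,\E[X^2]\;=\;O(\tnb^2)\;=\;o(1/n)
\]
using $b>n^{1/2+\epsilon}$. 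With this one-line fix the proof is complete, and its uniformity in $b>\thr$ follows, as you note, because every estimate depends on $(n,b)$ only through $p=n\bar F(b)$ and $\lambda=-\log p$, both of which tend to their limits uniformly over $b>\thr$.
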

\noindent Therefore there exists a constant $c$ such that
\[ \exp\left( n\Lambda_b(\tnb)\right) \leq c,\] for all admissible
values of $n$ and $b.$ We evaluate the second moment of the estimator
$\Zr$ through the equivalent expectation operation corresponding to
the original measure $\Pr(\cdot)$ as below:
\begin{align*}
  \Eth \left[ \Zr^2(n,b) \right] = \E \left[ \Zr(n,b) \right] \leq c
  \pn \Pr(\Ar),
\end{align*}
where the last inequality follows from \eqref{RES-EST-BND} and Lemma
\ref{LEM-LD}. From Proposition \ref{LD-RES-PROB}, we have that $\Pr
(\Ar) = o \left( \pn \right).$ Therefore,
\begin{align}
  \text{Var}_2\left[\Zr(n,b)\right] = o\left( (\pn)^2\right), \text{ as
  } n \nearrow \infty,
\label{VAR-RES}
\end{align}
thus arriving at the following theorem:
\begin{theorem}
  If the realizations of the estimators $\Zd$ and $\Zr$ are generated
  respectively from the measures $\Pt(\cdot)$ and $\Pth(\cdot),$ and
  if we let
  \[ Z(n,b) := \Zd(n,b) + \Zr(n,b),\] then under Assumption
  \ref{LEFT-TAIL-LIGHTER-ASSUMP}, the family of estimators $(Z(n,b):n
  \geq 1, b > \thr)$ achieves asymptotically vanishing relative error
  for the estimation of $\Pr\{\evA\},$ as $n \nearrow \infty;$ that
  is,
  \[ \frac{\textnormal{Var}_{n,b}\left[Z(n,b)\right]}{\Pr\{\evA\}^2} =
  o(1),\] as $n \nearrow \infty,$ uniformly for $b > \thr.$
\label{EFF-LD}
\end{theorem}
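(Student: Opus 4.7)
The plan is to assemble the variance bounds already obtained for the two sub-estimators and combine them with the classical asymptotic $\Pr\{S_n > b\} \sim n\bar{F}(b)$ to conclude vanishing relative error. Since $\Ad$ and $\Ar$ partition $\{S_n > b\}$, unbiasedness of $Z$ is automatic from $\E[\Zd] = \Pr(\Ad)$ and $\E[\Zr] = \Pr(\Ar)$. The real content is the variance bound.

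First I would note that $\Zd$ and $\Zr$ are produced from two separate simulation experiments under genuinely different importance sampling measures ($\Pt$ for the dominant piece and $\Pth$ for the residual piece) on independent probability spaces, so they are independent and $\textnormal{Var}[Z] = \textnormal{Var}[\Zd] + \textnormal{Var}[\Zr]$. (Even without independence, $\textnormal{Var}[Z]\le 2(\textnormal{Var}[\Zd] + \textnormal{Var}[\Zr])$ would suffice for the conclusion.) Substituting the bounds \eqref{VAR-DOM} and \eqref{VAR-RES}, obtained respectively from the trivial pathwise inequality $\Zd \le \pn$ together with $\#\{X_i \ge b\}\ge 1$ under $\Pt$, and from Lemma \ref{LEM-LD} combined with the likelihood-to-second-moment identity $\Eth[\Zr^2] = \E[\Zr]\le c\,\pn\,\Pr(\Ar)$ and the residual estimate $\Pr(\Ar) = o(\pn)$ coming from \eqref{third-asymp}, gives
\[
\textnormal{Var}[Z(n,b)] = o\bigl((\pn)^2\bigr).
\]

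To close the proof I would divide by $\Pr\{S_n > b\}^2$ and invoke \eqref{LD-PROBS}, which gives $\Pr\{S_n > b\}^2 \sim (\pn)^2$, yielding $\textnormal{Var}[Z]/\Pr\{S_n>b\}^2 = o(1)$ as $n \nearrow \infty$. The definition in Section~\ref{sec:STR-EFF} then immediately gives the asymptotically vanishing relative error conclusion.

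The main delicacy, and what I would be most careful about, is uniformity in $b$ on the range $b > n^{\beta+\epsilon}$. This uniformity enters through three ingredients: (i) the asymptotic \eqref{LD-PROBS}, which is standard and known to be uniform on this range; (ii) the $o(1)$ statement in \eqref{third-asymp}, which is similarly uniform; and (iii) Lemma \ref{LEM-LD}, whose statement already asserts uniformity for $b > n^{\beta+\epsilon}$. So provided those three uniform statements are in hand, the final bookkeeping is routine and the theorem follows. No delicate probabilistic argument is needed in the theorem's own proof, since the hard work is absorbed into the earlier ingredients.
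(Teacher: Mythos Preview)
Your proposal is correct and follows essentially the same approach as the paper: independence of $\Zd$ and $\Zr$ gives additivity of variances, and then \eqref{VAR-DOM}, \eqref{VAR-RES}, and \eqref{LD-PROBS} immediately yield the result. Your added discussion of uniformity and of the ingredients behind the two variance bounds is more explicit than the paper's one-line proof, but the argument is identical.
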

\noindent Here $\textnormal{Var}_{n,b}[\cdot]$ denotes the variance
operator resulting due to the composite procedure of drawing
realizations of $\Zd$ and $\Zr$ from the measures $\Pt(\cdot)$ and
$\Pth(\cdot)$ respectively.
\begin{proof}
  Since the realizations of $\Zd$ and $\Zr$ are obtained independent
  of each other, the variance of $Z$ is just the sum of variances of
  $\Zd$ and $\Zr$ computed according to the measures from which they
  are generated; the proof is now evident from \eqref{VAR-DOM},
  \eqref{VAR-RES} and \eqref{LD-PROBS}.
\end{proof}
\begin{remark}
  A consequence of the above theorem is that, due to \eqref{NSAMP},
  the number of i.i.d. replications of $Z(n,b)$ required to achieve
  $\epsilon$-relative precision with probability at least $1-\delta$
  is at most $o(\epsilon^{-2}\delta^{-1}),$ which is independent of
  the rarity parameters $n$ and $b.$ In our algorithm each replication
  demands $O(n)$ computational effort, thus requiring an overall
  computational cost of $O(n),$ as $n \nearrow \infty.$
\end{remark}

\begin{remark}
  One can easily check that, this same simulation procedure can also
  be used to efficiently compute probabilities $\Pr\{S_N>b\}$ when $N$
  is a light-tailed random variable independent of the increments
  $X_n.$
\end{remark}

\section{Simulation Methodology for $\{\tau_b < \infty\}$}
\label{SEC-SIM-METH}

As before, the sequence $(S_n: n \geq 0)$ with $S_0:=0$ and $S_n :=
X_1 + \ldots + X_n$ represents the random walk associated with the
i.i.d collection $(X_n: n \geq 1).$ We have $\E X_n = 0,$ and $\Pr\{
X_n > x\} = x^{-\alpha}L(x)$ for some slowly varying function
$L(\cdot)$ and $\alpha > 1.$ Given $\mu > 0,$ let
\[M:=\sup_n \ (S_n-n\mu).\] Since $(S_n-n\mu: n \geq 0)$ is a random
walk with negative drift, the random variable $M$ is proper. For $b >
0,$ recall that the first-passage time $\tau_b$ is defined as $\tau_b
:= \inf\{n \geq 0: S_n-n\mu > b\}.$ In this section we present
simulation methods for the efficient computation of
\[\Pr\{ M > b\} = \Pr \{ \tau_b < \infty\}, \text{ as } b \nearrow
\infty.\]

Naive simulation of $\{ \tau_b < \infty\}$ will require generation of
all the increments until the partial sum $S_n-n\mu$ exceed $b.$ Due to
the negative drift of the random walk $(S_n - n\mu: n \geq 0),$ we
have $\tau_b \nearrow \infty$ a.s. as $b \nearrow \infty,$ and hence
this method is not computationally feasible. To counter the prospect
of generating uncontrollably large number of increment random
variables in simulation, we re-express $\Pr \{ \tau_b < \infty\}$ as
below: Consider a strictly increasing sequence of integers $( n_k : k
\geq 0)$ with $n_0 = 0;$ also fix $p := (p_k : k \geq 1)$ satisfying
$p_k > 0$ for all $k$ and $\sum_k p_k = 1;$ the vector $p$ can be seen
as a probability mass function on positive integers. Consider an
auxiliary random variable $K$ which takes the value of positive
integer $k$ with probability $p_k.$ Then
\begin{align}
  \Pr \{ \tau_b < \infty\} &= \sum_{k \geq 1} p_k \frac{\Pr \{
    \evk\}}{p_k}
  \nonumber\\
  &= \E \left[ \E \left[ \frac{\Pr \{ n_{_{K-1}} < \tau_b \leq
        n_{_K}\}}{\pK} \left| \frac{}{} \right. K \right]\right].
\label{RE-EXP}
\end{align}

Now in a simulation run, if the realized value of the auxiliary random
variable $K$ is $k,$ generate a sample from a probability measure,
possibly different from $\Pr(\cdot),$ of a random variable $Z_k$ that
has $\Pr\{\evk\}$ as its expectation under the changed measure. Then
equation \eqref{RE-EXP} assures that taking the sample mean of
i.i.d. replications of $Z_K/p_{_K}$ following the changes of measure
(to be explained in Section \ref{SUBSEC-SIM-LOC}) for the generation
of $\{Z_k: k \geq 1\}$ will yield an unbiased estimator for the
quantity $\Pr \{ \tau_b < \infty\}.$

The performance of any importance sampling algorithm following the
outlined procedure will depend crucially on the choice of
probabilities $p_k,$ and the changes of measure employed to estimate
$\Pr\{ \evk \},$ for $k \geq 1.$ The sequence $(n_k : k \geq 0)$
partitions non-negative integers into `blocks' $((\nkm, n_k]: k \geq
1).$ For reasons that will be clear later, we choose the blocks
$(\nkm, n_k]$ in the following manner: Fix a positive integer $r > 1$
and let,
\[n_0 = 0, \hspace{2pt} n_k = r^k, \text{ for } k \geq 1.\] In the
following section, we present related asymptotics that will be useful
in the efficiency analysis of the algorithms that are
developed.

\subsection{Related Asymptotics}
\label{ASYMP-LC-SEC}
Recall that 
\[\tau_b := \inf\{k: S_k > b + k \mu\} \text { and } M :=
\sup_n(S_n-n\mu).\] The events $\left\{ M > b \right\}$ and $\{ \tau_b
< \infty\}$ are the same. Let $\Fi(x) := \int_{x}^{\infty}
\bar{F}(u)du $ denote the integrated tail of $\bar{F}(\cdot).$ Under
Assumption \ref{LEFT-TAIL-LIGHTER-ASSUMP}, it is well known (see, for
example, \cite{Veraverbeke197727}) that,
\begin{equation}
  \Pr \{\tau_b < \infty\} \sim \frac{1}{\mu}\Fi(b), \text{ as } b
  \nearrow \infty.
  \label{ASYMP}
\end{equation}
The asymptotics \eqref{ASYMP} hold for level crossing probabilities of
random walks under more general increment distributions (see, for
example, \cite{Korshunov199797}).

The following finite-horizon asymptotics are also available if we make
this non-restrictive smoothness assumption on the tail probabilities
$\bar{F}(\cdot):$
\begin{assumption}
  There exists a $t_0 > 0$ such that the slowly varying function
  $L(\cdot)$ in $\bar{F}(x) = x^{-\alpha}L(x)$ is continuously
  differentiable for all $t \geq t_0.$ Further $L(\cdot)$ satisfies,
  \[ L'(x) = o\left( \frac{L(x)}{x}\right), \text{ as } x \nearrow
  \infty.\]
  \label{SMOOTHNESS-ASSUMP}
\end{assumption}
If $X$ is such that $\text{Var}[X] < \infty$ and it satisfies
Assumptions \ref{LEFT-TAIL-LIGHTER-ASSUMP} and
\ref{SMOOTHNESS-ASSUMP}, then from Theorem 6 of
\cite{doi:10.1137/S0040585X97978877}, we have uniformly in $n$ that,
\begin{equation}
  \Pr \{ \tau_b \leq n\} = \left( \sum_{j=1}^n \bar{F}(b+j\mu) \right) \left( 1 +
    O\left(\frac{1}{b}\right)  \right) + o\left( \sqrt{b \wedge n} \  
    \bar{F}(b)\right). 
  \label{FINITE-HOR-ASYMP-FIN-VAR}
\end{equation}
When $\text{Var}[X] = \infty,$ under Assumptions
\ref{LEFT-TAIL-LIGHTER-ASSUMP} and \ref{SMOOTHNESS-ASSUMP}, it follows
from Theorem 2.4 of \cite{borovkovboxma} that uniformly for all $n,b$
satisfying $n\bar{F}(b) = o(1),$
\begin{equation}
  \Pr \{ \tau_b \leq n \} = \left( \sum_{j=1}^n \bar{F}(b+j\mu)
  \right)  \left( 1 +
    O\left(\frac{n^{\frac{1}{\alpha}+\epsilon}}{b}\right) \right)
  \label{FINITE-HOR-ASYMP-INF-VAR}
\end{equation}
for every $\epsilon > 0.$

The following characterization of the zero-variance measure $\Pr\{
\cdot| \tau_b < \infty\}$ (see Theorem 1.1 of \cite{Asmussen1996103})
sheds light on how the first passage over a level $b$ happens
asymptotically: If we use $a(b):=\Fi(b)/\bar{F}(b),$ then conditional
on $\tau_b < \infty,$
\begin{equation}
  \left( \frac{\tau_b}{a(b)}, \left( \frac{S_{\lfloor u\tau_b
          \rfloor}}{\tau_b}: 0 \leq u < 1\right), \frac{S_{\tau_b} -
      b}{a(b)}\right)
  \Longrightarrow \left( \frac{Y_0}{\mu}, (-u\mu : 0 \leq u < 1),
    Y_1\right)
\label{COND-LT}
\end{equation}
in $\mathbb{R} \times D[0,1) \times \mathbb{R}.$ The joint law of
$Y_0,Y_1$ is defined as follows: for $y_0, y_1 \geq 0, \Pr \{Y_0 >
y_0, Y_1 > y_1\} = \Pr \{ Y_1 > y_0 + y_1\}$ with $Y_0 \overset{d}{=}
Y_1,$ and
\[ \Pr \{ Y_1 > y_1\} =
\frac{1}{\left(1+y_1/(\alpha-1)\right)^{\alpha-1}}.\]

\subsection{Efficient simulation of $\{ \evk \}$}
\label{SUBSEC-SIM-LOC}
In this section we identify importance sampling changes of measure for
the efficient computation of the probabilities $\Pr \{ \evk
\}.$ 
Define the following events:
\[ A_k = \bigcup_{i=\nkm + 1}^{n_k}\left\{ X_i > b+i\mu \right\}
\text{ and } B_k = \bigcap_{i=1}^{n_k} \left\{ X_i < b+ \nkm\mu
\right\}.\] The events $A_k$ and $B_k$ are defined in the same spirit
as that of $\Ad$ and $\Ar$ in the simulation of $\{\evA\}$ in Section
\ref{SEC-LD}: the event $A_k$ includes sample paths that have at least
one ``big'' jump of appropriate size in one of the increments indexed
between $\nkm$ and $n_k,$ whereas on the other set $B_k,$ we have all
the increments bounded from above. As in the simulation of large
deviation probabilities of sums of random variables in Section
\ref{SEC-LD}, we can partition the event $\{\evk\}$ into:
\[ \{ \evk, A_k\}, \{ \evk, B_k\} \text{ and } \{ \evk,
\bar{A}_k\cap\bar{B}_k\},\] and arrive at unbiased estimators for
their probabilities separately via different importance sampling
measures. Here $\bar{A}$ denotes complement of the set $A.$


\subsubsection{Simulating $\{ \evk, A_k\}$}
\label{SIM-AK}
We prescribe the following two step procedure: Let $q_k(b) :=
\sum_{i=\nkm+1}^{n_k} \bar{F}(b+i\mu).$
\begin{enumerate}
\item Choose an index $J \in \{\nkm+1, \ldots, n_k\}$ such that $\Pr
  \{J = n\} = \bar{F}(b+ n \mu)/q_k(b),$ for $\nkm < n \leq n_k.$
\item Simulate the increment $X_n$ from $F(\cdot|X_n \geq b + n \mu),$
  if $n = J;$ otherwise, simulate $X_n$ from $F(\cdot),$ for any $n
  \leq n_k.$
\end{enumerate}
In this sampling procedure, we induce the `big' jumps typically
responsible for the occurrence of $\{ \evk \}$ with suitable
probabilities by sampling from the conditional distribution
$F(\cdot|X_J \geq b + J\mu).$ This sampling procedure results in the
importance sampling measure $\Pr _{k,1}(\cdot)$ characterised by:
\begin{align*}
  d\Pr _{k,1} (x_1,\ldots,x_{n_k}) := \sum_{i=\nkm+1}^{n_k}
  \frac{\bar{F}(b+ i \mu)}{q_k(b)}. \frac{dF(x_1)\ldots
    dF(x_{n_k})}{\bar{F}(b+ i \mu)}\mathbf{1}(x_i \geq b+ia).
\end{align*}
This in turn yields a likelihood ratio,
\begin{align*}
  \frac{d\Pr}{d\Pr_{k,1}}(X_1,\ldots,X_{n_k}) = \frac{q_k(b)}{\#\{ X_i
    \geq b+i \mu : \nkm < i \leq n_k\}},
  \label{LOC-LLR-1}
\end{align*}
on the set $A_k.$ Then we have,
\begin{equation}
  Z_{k,1}(b) := \frac{q_k(b)}{\#\{X_i \geq b+i \mu : \nkm < i \leq
    n_k \}} \mathbb{I}(\evk, A_k)
  \label{LOC-EST-1}
\end{equation}
as the unbiased estimator for the quantity $\Pr\{ \evk, A_k\}.$ Here
note that $\mathbb{I}(\evk, A_k) = 1$ a.s. under $\Pr_{k,1}(\cdot).$

\subsubsection{Simulating $\{ \evk, B_k\}$}
\label{SIM-BK}
On the event $B_k,$ none of the random variables $X_1,\ldots,X_{n_k}$
exceed the level $(b+\nkm\mu);$ since these increments are bounded (on
$B_k)$, we can draw their samples from an appropriately truncated,
exponentially twisted variation of $F(\cdot),$ as in Section
\ref{SUBSEC-SIM-AR}, without losing absolute continuity on $\{ \evk,
B_k\}$. For estimating $\Pr \{ \evk, B_k\},$ we draw samples of
$X_1,\ldots, X_{\tau_b \wedge n_k}$ independently from the
distribution $F_{k}(\cdot)$ satisfying,
\begin{align}
  \frac{dF_k(x)}{dF(x)} &= \exp(\theta_k x -
  \Lambda_k(\theta_k))\mathbf{1}(x < b+ \nkm \mu), \quad x \in
  \mathbb{R}; \nonumber\\
  \text{here, } \theta_k (= \theta_k(b)) &:= \frac{-\log
    (n_k\bar{F}(b+ \nkm \mu))}{b+ \nkm \mu}, \text{ and } \label{THETA-K}\\
  \Lambda_k(\theta) &:= \log \left(\int_{-\infty}^{b+ \nkm
      \mu}\exp(\theta_k x)F(dx) \right), \quad \theta \geq
  0. \label{LAMBDA-K}
\end{align}
Let $\Pr_{k,2}(\cdot)$ be the measure induced by drawing samples as
above. Then the resulting likelihood ratio on $\{ \evk, B_k\}$ is:
\begin{align*}
  \frac{d\Pr}{d \Pr_{k,2}}(X_1,\ldots,X_{n_k}) =
  \exp\left(-\theta_kS_{\tau_b} + \tau_b\Lambda_k(\theta_k)\right).
\end{align*}
The associated estimator for computing $\Pr \{ \evk, B_k \}$ is:
\begin{align}
  Z_{k,2}(b) := \exp\left(-\theta_kS_{\tau_b} +
    \tau_b\Lambda_k(\theta_k)\right) \mathbb{I}(\evk,B_k)
  \label{LOC-EST-2}
\end{align}

\subsubsection{Simulating $\{ \evk, \bar{A}_k\cap\bar{B}_k\}$}
\label{SIM-ABK}
We draw samples in a two step procedure similar to that in the Section
\ref{SIM-AK}.
\begin{enumerate}
\item Choose an index $J$ uniformly at random from $\{1, \ldots,
  n_k\}$
\item Simulate the increment $X_n$ from $F(\cdot|X_n \geq b + \nkm
  \mu),$ if $n = J;$ otherwise, simulate $X_n$ from $F(\cdot),$ for
  any $n \leq n_k.$
\end{enumerate}
If $\Pr_{k,3}(\cdot)$ denotes the change of measure induced by drawing
samples according to the above procedure, then the likelihood ratio on
the set $\{ \evk, \bar{A}_k\cap\bar{B}_k\}$ is:
\begin{align*}
  \frac{d\Pr}{d\Pr_{k,3}}(X_1,\ldots,X_{n_k}) = \frac{n_k
    \bar{F}(b+\nkm \mu)}{\#\{ X_i \geq b + \nkm \mu : 1 < i \leq
    n_k\}}.
\end{align*}
The resulting estimator for the computation of $\Pr \{ \evk,
\bar{A}_k\cap\bar{B}_k \}$ is:
\begin{align}
  Z_{k,3}(b) := \frac{n_k \bar{F}(b+\nkm\mu)}{\#\{ X_i \geq b + \nkm
    \mu : 1 < i \leq n_k \}} \mathbb{I}\left(\evk,
    \bar{A}_k\cap\bar{B}_k \right).
  \label{LOC-EST-3}
\end{align}

As in Section \ref{SEC-LD}, the estimator for $\Pr \{\evk\}$ can be
obtained by summing the estimators of component events $\Pr\{ \evk,
A_k\}, \Pr\{ \evk, B_k\}, \text{ and } \Pr\{ \evk,
\bar{A}_k\cap\bar{B}_k\}:$
\begin{align}
  Z_k(b) := Z_{k,1}(b) + Z_{k,2}(b) + Z_{k,3}(b).
  \label{LOC-EST}
\end{align}

\subsection{Simulation of $\{\tau_b < \infty\}$ - the finite variance
  case}
\label{SEC-FIN-VAR}
Here we develop on the ideas stated at the beginning of Section
\ref{SEC-SIM-METH}. We have the increasing sequence of integers $(n_k:
k \geq 0),$
\[ n_0 = 0, n_k = r^k \text{ for } k \geq 1,\] for some integer $r >
1.$ Further, we have an auxiliary random variable $K$ taking values in
positive integers according to the probability mass function $(p_k: k
\geq 1).$ As in \eqref{RE-EXP}, we re-express the quantity of interest
as:
\begin{align*}
  \Pr \{ \tau_b < \infty\} &= \E \left[ \E \left[ \frac{\Pr \{ n_{_{K-1}} < \tau_b \leq
        n_{_K}\}}{\pK} \left| \frac{}{} \right. K \right]\right].
\end{align*}
From \eqref{LOC-EST}, we have estimators $\{ Z_k(b): k \geq 1\}$ that
can be used to compute the corresponding probabilities $\{ \Pr \{ \evk
\}: k \geq 1 \}.$ Consider the following simulation procedure:
\begin{enumerate}
\item Draw a sample of $K$ such that $\text{\Pr}\{K=k\} = p_k.$
\item Conditional on the realized value of $K,$
  \begin{enumerate}
  \item[2a)] Generate a realization of $Z_K(b)$ as in Section
    \ref{SUBSEC-SIM-LOC}.
  \item[2b)] Return ${Z_K(b)}/{\pK}.$
  \end{enumerate}
\end{enumerate}

We present the sample mean of the values returned by $N$ independent
simulation runs of the above procedure as our final estimate of $\Pr\{
\tau_b < \infty\}.$ Let $Q(\cdot)$ denote the probability measure in
the path space induced by the generation of increment random variables
as a result of this sampling procedure; let $\E^Q[\cdot]$ and
$\text{Var}^Q[\cdot]$ be the expectation and variance operator
associated with the measure $Q(\cdot).$ Given $b > 0,$ the overall
unbiased estimator for the computation of $\Pr \{\tau_b < \infty\}$
is,
\[ Z(b) := \frac{Z_K(b)}{\pK}.\]

Note that the number of independent simulation runs needed to achieve
a desired relative precision, as in \eqref{NSAMP}, is directly related
to the sampling variance of $Z(b).$ If $(Z(b):b > 0)$ offer
asymptotically vanishing relative error, we just need
$o(\epsilon^{-2}\delta^{-1})$ independent replications of the
estimator.  However, as pointed in \cite{MR0223065}, and further
justified in \cite{MR1180030}, both the variance of an estimator and
the expected computational effort required to generate a single sample
are important performance measures, and their product can be
considered as a `figure of merit' in comparing performance of
algorithms that provide unbiased estimators of $\Pr \{\tau_b <
\infty\}$.  For any given $b,$ let $\nu_b$ denote the largest index of
the increment random variables $(X_n:n \geq 1)$ considered for
simulation in a particular simulation run. The expectation of $\nu_b$,
then gives a measure of the expected number of increment random
variables generated, and subsequently of the expected computational
effort in every simulation run.  In particular, the latter may be
bounded from above by a constant $C >0$ times the expectation of
$\nu_b$.

In a single run of the above procedure, if the realized value of $K$
is $k,$ we look for estimating $\Pr\{ \evk \}$ which does not entail
the generation of more than $n_k$ increment random variables, thus
ensuring termination. In particular, $n_{_{K-1}} \leq \nu_b \leq
n_{_K}$.  The following theorems give a measure of both the variance
and the expected computational effort per replication of $Z(b)$ for a
specific choice of the probabilities $p_k$. Recall that $Q(\cdot)$ is
the probability measure that governs the law of $Z(b)$ when the random
variables $Z_{K,j}(b)$ are generated as explained in Sections
\ref{SIM-AK}, \ref{SIM-BK} and \ref{SIM-ABK}.

In all the theorems that follow it is assumed that the common
distribution $F(\cdot)$ of the increments satisfy Assumptions
\ref{LEFT-TAIL-LIGHTER-ASSUMP} and \ref{SMOOTHNESS-ASSUMP}.
\begin{theorem}
For
\begin{equation}
 p_k = \frac{\Fi(b+\nkm \mu) - \Fi(b+n_k \mu)}{\Fi(b)}, k \geq 1,
 \label{PK-SEL}
\end{equation}
the family of unbiased estimators $\left( Z(b): b > 0 \right)$
achieves asymptotically vanishing relative error for the computation
of $\Pr\{ \tau_b < \infty\},$ as $b \nearrow \infty;$ that is:
\[ \varlimsup_{b \rightarrow \infty} \frac{\textnormal{Var}^Q
  \left[Z(b)\right]}{\Pr\{ \tau_b < \infty \}^2} = 0.\]
\label{GR2-EFF}
\end{theorem}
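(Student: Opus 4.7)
The plan is to decompose the second moment of $Z(b)$ over the random index $K$ and then exploit two facts: (i) each local estimator $Z_k(b)$ already has vanishing relative error uniformly in $k$ by Theorem \ref{LOC-EFF}, and (ii) the chosen $p_k$ are asymptotically proportional to $\Pr\{\evk\}$ thanks to the localization asymptotic \eqref{LOC-ASYMP}. Concretely, since $K$ is sampled independently of the path and $\E^Q[Z(b)\mid K=k] = \Pr\{\evk\}/p_k$ is unbiased (as already noted via \eqref{RE-EXP}), conditioning on $K$ gives
\begin{equation*}
  \E^Q[Z(b)^2] \;=\; \sum_{k \geq 1} p_k \cdot \frac{\E[Z_k(b)^2]}{p_k^2} \;=\; \sum_{k \geq 1} \frac{\E[Z_k(b)^2]}{p_k}.
\end{equation*}

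The first step would be to invoke Theorem \ref{LOC-EFF}, which yields $\sup_{k \geq 1} \E[Z_k(b)^2]/(\Pr\{\evk\})^2 = 1 + o(1)$ as $b \nearrow \infty$. Hence, for any $\epsilon > 0$ and $b$ large enough,
\begin{equation*}
  \E^Q[Z(b)^2] \;\leq\; (1+\epsilon)\sum_{k \geq 1} \frac{(\Pr\{\evk\})^2}{p_k}.
\end{equation*}
The second step is to insert the explicit form of $p_k$ from \eqref{PK-SEL} and rewrite
\begin{equation*}
  \frac{(\Pr\{\evk\})^2}{p_k} \;=\; \Pr\{\evk\}\cdot \frac{\Pr\{\evk\}}{p_k} \;=\; \Pr\{\evk\}\cdot \Fi(b) \cdot \frac{\Pr\{\evk\}}{\Fi(b+\nkm\mu)-\Fi(b+n_k\mu)}.
\end{equation*}
By \eqref{LOC-ASYMP}, the last ratio equals $\mu^{-1}(1+o(1))$ uniformly in $k$ as $b \nearrow \infty$. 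Combined with $\Fi(b)/\mu \sim \Pr\{\tau_b < \infty\}$ from \eqref{ASYMP}, this gives, uniformly in $k$,
\begin{equation*}
  \frac{\Pr\{\evk\}}{p_k} \;=\; (1+o(1))\,\Pr\{\tau_b < \infty\}.
\end{equation*}

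Plugging back and using $\sum_{k\geq 1}\Pr\{\evk\} = \Pr\{\tau_b < \infty\}$, the third step yields
\begin{equation*}
  \E^Q[Z(b)^2] \;\leq\; (1+\epsilon)(1+o(1))\,\Pr\{\tau_b < \infty\}\sum_{k \geq 1}\Pr\{\evk\} \;=\; (1+o(1))\bigl(\Pr\{\tau_b < \infty\}\bigr)^2.
\end{equation*}
Since $\E^Q[Z(b)] = \Pr\{\tau_b < \infty\}$ by unbiasedness, subtracting $(\Pr\{\tau_b<\infty\})^2$ gives $\text{Var}^Q[Z(b)] = o\bigl((\Pr\{\tau_b<\infty\})^2\bigr)$, which is the claimed vanishing relative error.

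The only delicate point is the \emph{uniformity} of the two $o(1)$ estimates over the infinite range of $k$: Theorem \ref{LOC-EFF} supplies uniformity for the variance of $Z_k(b)$, while \eqref{LOC-ASYMP} is stated to hold uniformly in the block endpoints $n_{k-1}, n_k$. These two uniformities are precisely what make the summation over $k$ legitimate without worrying about tail effects of $K$; I expect this to be the main (and essentially the only) subtle step in the write-up.
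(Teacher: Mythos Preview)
Your proposal is correct and follows essentially the same approach as the paper's proof: condition on $K$, apply Theorem~\ref{LOC-EFF} for the uniform bound $\E[Z_k^2]\leq(1+\epsilon)(\Pr\{\evk\})^2$, use \eqref{LOC-ASYMP} with \eqref{PK-SEL} to get $\Pr\{\evk\}/p_k \leq (1+\epsilon)\mu^{-1}\Fi(b)$ uniformly in $k$, and conclude via \eqref{ASYMP}. The only cosmetic difference is that the paper keeps both factors of $\Pr\{\evk\}/p_k$ and sums $\sum_k p_k=1$, whereas you pull out one factor and sum $\sum_k\Pr\{\evk\}=\Pr\{\tau_b<\infty\}$; these are equivalent rearrangements, and your identification of the uniformity in $k$ as the only delicate point is exactly right.
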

\begin{theorem}
  If $\bar{F}(\cdot)$ is regularly varying with index $\alpha > 2,$
  for the choice of $p = (p_k : k \geq 1)$ in \eqref{PK-SEL}:
  \[ \E^Q[\nu_b] \leq \frac{r + o(1)}{\mu (\alpha - 2)}b, \text{ as }
  b \nearrow \infty.\]
\label{GR2-RT}
\end{theorem}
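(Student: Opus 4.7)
The plan is to start from the deterministic domination $\nu_b \leq n_{_K}$ that holds in every simulation run (since once $K=k$ is drawn, the algorithm of Section~\ref{SUBSEC-SIM-LOC} never looks at increments beyond index $n_k$). Taking $Q$-expectation gives
\[
E^Q[\nu_b] \;\leq\; \sum_{k\geq 1} p_k\, n_k \;=\; \frac{1}{\Fi(b)}\sum_{k\geq 1} r^k\bigl[\Fi(b+n_{k-1}\mu) - \Fi(b+n_k\mu)\bigr].
\]
A summation-by-parts (shifting the index in the first piece by one and matching tails) then collapses this to
\[
E^Q[\nu_b] \;\leq\; r \;+\; \frac{r-1}{\Fi(b)}\sum_{k\geq 1} r^k\,\Fi(b+n_k\mu),
\]
where the legitimacy of the rearrangement follows from $r^N\Fi(b+r^N\mu)\to 0$ as $N\to\infty$, which is immediate when $\alpha>2$ since $\Fi$ is regularly varying with index $-(\alpha-1)$ and $r^{N(2-\alpha)}\to 0$.

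Next I will control the remaining sum by an integral. Because $\Fi$ is monotone decreasing, on each interval $s\in[r^{k-1},r^k]$ we have $\Fi(b+s\mu)\geq \Fi(b+r^k\mu)$, so integrating gives
\[
r^k\,\Fi(b+n_k\mu) \;\leq\; \frac{r}{r-1}\int_{r^{k-1}}^{r^k}\Fi(b+s\mu)\,ds.
\]
Summing over $k\geq 1$ and changing variables $v=b+s\mu$ yields
\[
\sum_{k\geq 1} r^k\,\Fi(b+n_k\mu) \;\leq\; \frac{r}{\mu(r-1)}\int_{b+\mu}^{\infty}\Fi(v)\,dv.
\]

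Finally, apply Karamata's theorem \eqref{KARAMATA} with $V=\Fi$ (regularly varying of index $-(\alpha-1)$) and $\beta=0$; since $\alpha-1>1$, this gives $\int_b^\infty \Fi(v)\,dv \sim b\,\Fi(b)/(\alpha-2)$ as $b\nearrow\infty$, and the truncation at $b+\mu$ instead of $b$ contributes only an $o(1)$ relative error by another application of regular variation. Plugging back,
\[
E^Q[\nu_b] \;\leq\; r \;+\; (r-1)\cdot\frac{r}{\mu(r-1)}\cdot\frac{b}{\alpha-2}\bigl(1+o(1)\bigr) \;=\; \frac{r+o(1)}{\mu(\alpha-2)}\,b,
\]
the additive constant $r$ being absorbed into the $o(1)$ since the leading term is linear in $b$. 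No single step is a genuine obstacle: the only moderately delicate point is justifying that the $o(1)$ in Karamata's asymptotic remains $o(1)$ after the dyadic-type discretization, which follows from the uniform convergence theorem for slowly varying functions since the upper Riemann sum already overshoots the integral by a bounded factor $r/(r-1)$ that is then exactly cancelled by the $(r-1)$ prefactor.
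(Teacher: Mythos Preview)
Your proof is correct and complete. It follows a somewhat different route from the paper's own argument, so a brief comparison is worthwhile.

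Both proofs begin with the deterministic bound $\nu_b\le n_K$ and the resulting sum $\sum_{k\ge 1}p_k n_k$. From there the paper writes each $p_k$ as $\frac{1}{\Fi(b)}\int_{b+n_{k-1}\mu}^{b+n_k\mu}\bar F(u)\,du$ and then uses the elementary identity $r^k=\bigl((b+r^k\mu)-b\bigr)/\mu$ \emph{inside} the integral to convert the geometric weight into a factor of $u$; summing, this produces $\frac{r}{\mu}\bigl(\int_{b+r\mu}^\infty u\bar F(u)\,du - b\int_{b+r\mu}^\infty\bar F(u)\,du\bigr)$, and Karamata is applied separately to the two integrals. Your route instead performs an Abel summation first, collapsing the telescoping differences of $\Fi$ to $r+\frac{r-1}{\Fi(b)}\sum_{k\ge 1}r^k\Fi(b+n_k\mu)$, then bounds this Riemann sum by $\frac{r}{\mu(r-1)}\int_{b+\mu}^\infty\Fi(v)\,dv$ and applies Karamata once to the integral of $\Fi$. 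The $(r-1)$ factor cancels exactly, giving the same leading constant $r/\bigl(\mu(\alpha-2)\bigr)$.

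Your argument is arguably a shade cleaner: it works throughout with the integrated tail $\Fi$, avoids splitting off the $k=1$ term, and needs Karamata only once. The paper's version, on the other hand, never needs the summation-by-parts step and makes the role of the moment $\int u\bar F(u)\,du$ (and hence the finite-variance condition $\alpha>2$) more transparent. Both are elementary and of comparable length; the final bound is identical.
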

\noindent Proofs of both these results are given later in Section
\ref{PROOFS-KEY-RES}.
\begin{remark}
  From Theorem \ref{GR2-EFF}, we have the vanishing relative error
  property for computing $\Pr\{ \tau_b < \infty\}$ whenever the
  increment random variables $X_n$ have finite mean (irrespective of
  the variance). Therefore we require only
  $o(\epsilon^{-2}\delta^{-1})$ i.i.d replications of $Z(b)$ to arrive
  at estimators that differ relatively at most by $\epsilon$ with
  probability at least $1-\delta.$ Now from Theorem \ref{GR2-RT} we
  conclude that, if the tail index $\alpha > 2$ (in which case the
  increments have finite variance), our importance sampling
  methodology estimates $\Pr \{ \tau_b < \infty\}$ in $O(b)$ expected
  computational effort.
  \label{COMP-EFF-GR2}
\end{remark}
\begin{remark}
  From the conditional limit result in \eqref{COND-LT}, one can infer
  that the values $p_k$ as in \eqref{PK-SEL} roughly match the
  zero-variance probability $\Pr\{ \evk \ | \ \tau_b < \infty\}$
  asymptotically.  For tails $\bar{F}(\cdot)$ with regularly varying
  index $1 < \alpha < 2,$ we have that $\E [ \tau_b \ | \ \tau_b <
  \infty ] = \infty;$ that is, the zero-variance measure itself has
  infinite expected termination time!  Since $p_k$ are assigned a
  value similar to $\Pr \{ \evk \ | \ \tau_b < \infty\},$ one might
  suspect infinite expected termination time for a single run of
  Algorithm 1 as well. As we note later in
  Remark~\ref{remark:remark_infinite_variance} after proof of Theorem
  \ref{GR2-RT}, for $p_k$s as in \eqref{PK-SEL}, this is indeed the
  case.
  \label{ZV-PROB-INF}
\end{remark}

\subsection{Simulation of $\{\tau_b < \infty\}$ - the infinite
  variance case}
\label{SEC-INFIN-VAR}
As indicated in Remark \ref{ZV-PROB-INF}, infinite termination time
for a simulation algorithm is clearly unacceptable. The following
question then is natural: By choosing $p_k$s differently, even if it
means compromising on variance of the estimator, can one achieve
finite expected termination time for the procedure in Section
\ref{SEC-FIN-VAR}?  Before answering this question below, we introduce
a family of tail distributions and their integrated counterparts: for
any $\beta > 2,$ define
\begin{align}
  \Gb(x) := \frac{\bar{F}(x)}{x^{\beta - \alpha}}, \text{ and }
  \Gib(x):= \int_x^{\infty}\Gb(u)du.
\label{GTAILS}
\end{align}
\begin{theorem}
  If the tail $\bar{F}(\cdot)$ is regularly varying with index $\alpha
  \in (1.5,2],$ then for any $\beta \in (2, 2\alpha -1)$,
  \begin{equation}
    p_k = \frac{\Gib(b+\nkm \mu) - \Gib(b+n_k \mu)}{\Gib(b)}, k \geq 1
    \label{PK-SEL-LT2}
  \end{equation}
  yields a family of unbiased estimators $\left( Z(b) = Z_{_K}(b)/\pK:
    b > 0 \right)$ achieving
  \begin{enumerate}
  \item strong efficiency: $\varlimsup_{b \rightarrow \infty}
    \frac{\textnormal{Var}^Q \left[Z(b)\right]}{\Pr\{ \tau_b < \infty
      \}^2} < \infty, \text{ and }$
  \item finite expected termination time: $\E^Q[\nu_b] \leq \frac{r +
      o(1)}{\mu (\beta - 2)}b, \text{ as } b \nearrow \infty$.
  \end{enumerate}
  \label{GR1P5-EFF-RT}
\end{theorem}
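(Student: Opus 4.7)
The plan is to address the two claims separately, each by reducing to a Karamata-type asymptotic computation built on the block-level efficiency from Theorem \ref{LOC-EFF}.

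For strong efficiency, since $K$ is sampled independently of the mechanism used to generate $Z_k(b)$ within a block, the second moment factorizes as $\E^Q[Z(b)^2]=\sum_{k\ge 1}\E[Z_k(b)^2]/p_k$. Theorem \ref{LOC-EFF} supplies $\E[Z_k(b)^2]\le(1+\epsilon)(\Pr\{\evk\})^2$ uniformly in $k$ for $b$ large, and \eqref{LOC-ASYMP} gives $\Pr\{\evk\}\sim\mu^{-1}[\Fi(b+\nkm\mu)-\Fi(b+n_k\mu)]$ uniformly in $k$. Combined with $\Pr\{\tau_b<\infty\}\sim\Fi(b)/\mu$ and the normalization $\sum_k p_k=1$ (which forces $\Gib(b)$ as the denominator of $p_k$), the claim reduces to showing that
\[ \frac{\Gib(b)}{\Fi(b)^2}\sum_{k\ge 1}\frac{[\Fi(b+\nkm\mu)-\Fi(b+n_k\mu)]^2}{\Gib(b+\nkm\mu)-\Gib(b+n_k\mu)} \]
remains bounded as $b\nearrow\infty$.

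The key step is a blockwise Cauchy-Schwarz inequality. Writing $I_k=(b+\nkm\mu,\,b+n_k\mu]$ and factoring $\bar F(u)=\Gb(u)^{1/2}\cdot(\bar F(u)u^{\beta-\alpha})^{1/2}$, one obtains
\[ \Big(\int_{I_k}\bar F(u)\,du\Big)^{\!2}\le\Big(\int_{I_k}\Gb(u)\,du\Big)\Big(\int_{I_k}\bar F(u)u^{\beta-\alpha}\,du\Big). \]
The first factor on the right is exactly $\Gib(b+\nkm\mu)-\Gib(b+n_k\mu)$, so after dividing and telescoping over $k$ the sum collapses to $\int_b^\infty\bar F(u)u^{\beta-\alpha}du$. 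This is where the hypothesis $\beta<2\alpha-1$ enters: Karamata's theorem \eqref{KARAMATA} applies iff $\alpha-(\beta-\alpha)>1$, yielding an asymptotic of order $b^{\beta-\alpha+1}\bar F(b)/(2\alpha-\beta-1)$. Substituting the Karamata asymptotics $\Fi(b)\sim b\bar F(b)/(\alpha-1)$ and $\Gib(b)\sim b\Gb(b)/(\beta-1)=b^{1+\alpha-\beta}\bar F(b)/(\beta-1)$, all powers of $b$ and $\bar F(b)$ cancel and the ratio converges to the finite constant $(\alpha-1)^2/[(\beta-1)(2\alpha-\beta-1)]$, establishing strong efficiency.

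For the expected termination time, since $\nu_b\le n_{_K}$ it suffices to bound $\sum_k n_kp_k$. With $n_k=r^k$, Abel summation rewrites this as $r+(r-1)\Gib(b)^{-1}\sum_{k\ge 1}r^k\Gib(b+r^k\mu)$, the boundary term $r^K\Gib(b+r^K\mu)\to0$ vanishing because $\Gib$ has index $1-\beta<-1$. Since $\Gib$ is monotone, the geometric Riemann sum is dominated by $(r/\mu)\int_b^\infty\Gib(v)dv$, and a further application of Karamata \eqref{KARAMATA} (valid because $\beta>2$) gives $\int_b^\infty\Gib(v)dv\sim b\Gib(b)/(\beta-2)$. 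Dividing by $\Gib(b)$ yields the claimed $\E^Q[\nu_b]\le(r+o(1))b/[\mu(\beta-2)]$.

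The main technical obstacle is justifying uniformity in $k$ of the asymptotic substitutions inside infinite sums: \eqref{LOC-ASYMP} gives equivalence for each $(n_{k-1},n_k)$ as $b\to\infty$, but the strong-efficiency calculation exchanges an infinite sum with a limit, and the termination-time argument compares a geometric Riemann sum to an integral at arbitrarily large arguments. Both are handled by standard Potter-type bounds on regularly varying functions, which dominate $\bar F$, $\Fi$, and $\Gib$ by $(1\pm\epsilon)$ envelopes simultaneously in $k$ once $b$ is sufficiently large.
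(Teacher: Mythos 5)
Your proof is correct, and it departs from the paper's argument in two interesting ways. For strong efficiency, the paper bounds the ratio $\Pr\{\evk\}/p_k$ by separately controlling numerator and denominator: it uses $\Fi(b+\nkm\mu)-\Fi(b+n_k\mu)\leq(n_k-\nkm)\mu\bar F(b+\nkm\mu)$, $\Gib(b+\nkm\mu)-\Gib(b+n_k\mu)\geq(n_k-\nkm)\mu\Gb(b+n_k\mu)$, and the regular-variation estimate $\Gb(b+\nkm\mu)/\Gb(b+n_k\mu)\leq(1+\epsilon)r^\beta$, arriving at $\Pr\{\evk\}/p_k\lesssim r^\beta(b+\nkm\mu)^{\beta-\alpha}\Gib(b)$ before summing. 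Your blockwise Cauchy--Schwarz factorization $\bar F=\sqrt{\Gb}\cdot\sqrt{\bar F\,u^{\beta-\alpha}}$ gives $\bigl(\int_{I_k}\bar F\bigr)^2\leq\bigl(\int_{I_k}\Gb\bigr)\bigl(\int_{I_k}\bar F\,u^{\beta-\alpha}\bigr)$, which collapses the sum to $\int_b^\infty\bar F(u)u^{\beta-\alpha}\,du$ in a single step and, as a bonus, eliminates the $r^\beta$ factor: your asymptotic constant $(\alpha-1)^2/[(\beta-1)(2\alpha-\beta-1)]$ is independent of $r$, whereas the paper's bound carries a factor $r^\beta$. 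For the expected termination time, the paper manipulates the sum $\sum_k r^{k+1}\int_{b+r^k\mu}^{b+r^{k+1}\mu}\Gb(u)\,du$ directly by the algebraic identity $r^k=(b+r^k\mu-b)/\mu$ and telescopes $\int u\Gb-b\int\Gb$; your Abel summation to $r+(r-1)\Gib(b)^{-1}\sum_k r^k\Gib(b+r^k\mu)$ followed by a Riemann-sum comparison with $\int_b^\infty\Gib(v)\,dv$ is an equivalent but somewhat tidier route (note the boundary term vanishes because $\Gib$ has index $1-\beta<-1$). Both of your variants are legitimate, and your concern about uniformity in $k$ is correctly resolved: Theorem~\ref{LOC-EFF} and the asymptotics \eqref{LOC-ASYMP} are already stated uniformly over the blocks, so the limiting exchanges go through without any additional Potter-bound machinery.
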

\begin{remark}
  Because of the strong efficiency, we need just $O(\epsilon^{-2}
  \delta^{-1})$ i.i.d. replications of $Z(b)$ to achieve the desired
  relative precision.  As in Remark \ref{COMP-EFF-GR2}, due to the
  bound on $\E[\nu_b]$ in Theorem \ref{GR1P5-EFF-RT}, the average
  computational effort for the entire estimation procedure is just
  $O(\epsilon^{-2}\delta^{-1}b).$ It is important to see this
  achievement in the context of Remark \ref{ZV-PROB-INF}: the induced
  measure $Q(\cdot)$ deviates from the zero-variance measure such that
  we get finite expected termination time, but only at the cost of
  losing vanishing relative error property to strong efficiency. Thus
  for the selection of $p_k$s as in \eqref{PK-SEL-LT2}, the suggested
  procedure ends up offering superior performance (in terms of
  computational complexity) compared to the algorithms that tend to
  just approximate the zero-variance measure.
\end{remark}

Given this result, it is difficult not to wonder why the tail index
$\alpha$ should be larger than 1.5 in the statement of Theorem
\ref{GR1P5-EFF-RT}, and what happens when $\alpha \leq 1.5.$ The
following result shows that it is indeed impossible to have both
strong efficiency and finite expected termination time when the tail
index $\alpha < 1.5.$
\begin{theorem}
  If the tail index $\alpha < 1.5,$ there does not exist an assignment
  of $(p_k, n_k : k \geq 1)$ such that both $\E^Q[Z^2(b)]$ and
  $\E^Q[\nu_b]$ are simultaneously finite.
\label{IMPOSS}
\end{theorem}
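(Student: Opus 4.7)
The plan is to derive a contradiction from assuming both $\E^Q[Z^2(b)]$ and $\E^Q[\nu_b]$ are finite, using a Cauchy--Schwarz bound that reduces the question to the divergence of an integral of the form $\int_0^\infty \sqrt{n}\,\bar F(b+n\mu)\,dn$ when $\alpha<3/2$. Conditioning on the auxiliary index $K$ and using the unbiasedness $\E_k[Z_k(b)]=\Pr\{\evk\}$, Jensen's inequality gives
\[
\E^Q[Z^2(b)] \;=\; \sum_{k\geq 1}\frac{\E_k[Z_k^2(b)]}{p_k} \;\geq\; \sum_{k\geq 1}\frac{\Pr\{\evk\}^2}{p_k}.
\]
For the termination time, I use that within a run with $K=k$, computing $Z_{k,1}(b)$ requires knowing the count $\#\{X_i\geq b+i\mu: n_{k-1}<i\leq n_k\}$, which forces observation of all of $X_1,\ldots,X_{n_k}$; hence $\nu_b\geq n_k$ on $\{K=k\}$, and $\E^Q[\nu_b]\geq \sum_{k\geq 1}p_k n_k$.

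Multiplying these two bounds and applying Cauchy--Schwarz with weights $\sqrt{p_k n_k}$ and $\Pr\{\evk\}/\sqrt{p_k}$ delivers
\[
\E^Q[Z^2(b)]\cdot \E^Q[\nu_b] \;\geq\; \left(\sum_{k\geq 1}\Pr\{\evk\}\sqrt{n_k}\right)^{\!2}.
\]
From the uniform asymptotic \eqref{LOC-ASYMP}, for $b$ large enough, $\Pr\{\evk\}\geq \tfrac{1}{2}\int_{n_{k-1}}^{n_k}\bar F(b+n\mu)\,dn$ uniformly in $k$. Since $\sqrt{n_k}\geq \sqrt{n}$ on $[n_{k-1},n_k]$, telescoping over $k$ gives the clean lower bound
\[
\sum_{k\geq 1}\Pr\{\evk\}\sqrt{n_k} \;\geq\; \tfrac{1}{2}\int_0^\infty \sqrt{n}\,\bar F(b+n\mu)\,dn.
\]

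Because $\bar F(b+n\mu)\sim \mu^{-\alpha}n^{-\alpha}L(n\mu)$ as $n\to\infty$, the integrand is regularly varying with index $1/2-\alpha$, which exceeds $-1$ precisely when $\alpha<3/2$; standard properties of regular variation (of the Karamata type, cf.\ \eqref{KARAMATA}) then force the integral to be infinite, yielding the desired contradiction. The step I expect to require the most care is the lower bound $\E^Q[\nu_b]\geq \sum_k p_k n_k$: although $Z_{k,1}(b)$ as written in Section~\ref{SUBSEC-SIM-LOC} requires all of $X_1,\dots,X_{n_k}$ to be observed in order to evaluate its likelihood-ratio denominator, ensuring this cannot be circumvented by a cleverer implementation demands a careful reading of the algorithm. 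Should only the weaker bound $\E^Q[\nu_b]\geq\sum_k p_k n_{k-1}$ be available as a fallback, a short case split on the growth of $n_k/n_{k-1}$ still suffices: when the ratio stays bounded, $\sqrt{n_{k-1}}$ differs from $\sqrt{n_k}$ only by a constant factor; when it grows unboundedly along a subsequence, each corresponding term $\Pr\{\evk\}\sqrt{n_{k-1}}$ scales like $n_{k-1}^{3/2-\alpha}L(n_{k-1})\to\infty$, and the series diverges either way.
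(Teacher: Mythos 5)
Your proposal is correct and follows essentially the same route as the paper's proof: apply Jensen to lower-bound $\E^Q[Z^2(b)]$ by $\sum_k \Pr\{\evk\}^2/p_k$, pair it with $\E^Q[\nu_b]\geq\sum_k p_k n_k$, invoke Cauchy--Schwarz to obtain $\bigl(\sum_k\Pr\{\evk\}\sqrt{n_k}\bigr)^2$, then telescope the blocks into $\int_0^\infty\sqrt{n}\,\bar F(b+n\mu)\,dn$ and conclude via Karamata that this diverges when $\alpha<3/2$. The one place you flag as needing care — whether $\nu_b$ really equals $n_K$ — is indeed glossed over in the paper (which earlier states only $n_{K-1}\leq\nu_b\leq n_K$ before using $\E^Q[\nu_b]\geq\sum_k p_k n_k$ in this proof); in the algorithm of Section \ref{SUBSEC-SIM-LOC} the estimators $Z_{K,1}$ and $Z_{K,3}$ do force generation of all of $X_1,\dots,X_{n_K}$, so $\nu_b=n_K$ and the bound holds, but your fallback dichotomy on the growth of $n_k/n_{k-1}$ is a sensible defensive addition that the paper does not include.
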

\begin{remark}
  If the tail index $\alpha = 1.5,$ the possibility of having both
  $\E^Q[Z^2(b)]$ and $\E^Q[\nu_b]$ finite will depend on the slowly
  varying function $L(\cdot)$. As we shall see in the proof of Theorem
  \ref{IMPOSS},
  \[\E^Q[Z^2(b)]\E^Q[\nu_b] =
  \Omega\left(\int_{b^2}^{\infty}\sqrt{u}\bar{F}(u)du) \right),\] as
  $b \nearrow \infty.$ If $L(x) = O((\log x)^{-m})$, $m \geq 2$, the above integral
  is finite, whereas if $L(x) = O(\log x)$ it is infinite; and it
  easily verified that the case of $L(x) = O((\log x)^{-m})$, $m \geq 2$, goes
  through the proof of Theorem \ref{GR1P5-EFF-RT}, thus achieving both
  strong efficiency and finite expected termination time. This
  illustrates the subtle dependence on the associated slowly varying
  function $L(\cdot)$ for the existence of such $p_k$s and $n_k$s.
\end{remark}

As illustrated by the theorem below, for $\alpha \in (1, 1.5],$ we
still have algorithms that demand only $O(b)$ units of expected
computer time if we look for less stringent notions of efficiency.

\begin{theorem}
  If the tail $\bar{F}(\cdot)$ is regularly varying with index $\alpha
  \in (1,1.5],$ then there exists an explicit selection of $p = (p_k:k
  \geq 1)$ such that the family of unbiased estimators $\left( Z(b) :
    b > 0 \right)$ satisfies both:
  \begin{align}
    \varlimsup_{b \rightarrow \infty} \frac{\E^Q
      \left[Z^{1+\gamma}(b)\right]}{\Pr\{ \tau_b < \infty
      \}^{1+\gamma}} &< \infty \text{ for all } \gamma \in
    \left(0,\frac{\alpha-1}{2-\alpha} \right), \text{ and
    } \label{LT1P5-EFF}\\
    \E^Q[\nu_b] &\leq Cb \text{ for some constant C}. \nonumber
  \end{align}
  In particular, for the following selection of $p=(p_k: k \geq 1),$
  \begin{equation}
    p_k = \frac{\Gib(b+\nkm \mu) - \Gib(b+n_k \mu)}{\Gib(b)}, k \geq 1
    \label{PK-SEL-LT1P5}
  \end{equation}
  if $\beta$ is chosen in $(2,\alpha+\gamma^{-1}(\alpha-1)),$ both the
  above inequalities are satisfied.
  \label{LT1P5-EFF-RT}
\end{theorem}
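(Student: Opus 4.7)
The plan is to combine the block-wise estimators $Z_k(b)$ from Section~\ref{SUBSEC-SIM-LOC} with the randomization weights \eqref{PK-SEL-LT1P5}, trading a slightly weaker moment condition on $Z(b)=Z_{K}(b)/\pK$ against a favourable expected termination time. I would first upgrade the vanishing relative error in Theorem~\ref{LOC-EFF} to a $(1+\gamma)$-moment estimate. For $\alpha \in (1,1.5]$ and $\gamma \in (0,(\alpha-1)/(2-\alpha))$ one has $1+\gamma<2$, so Lyapunov's inequality combined with Theorem~\ref{LOC-EFF} gives
\[
\E_k[Z_k^{1+\gamma}(b)] \le \bigl(\E_k[Z_k^2(b)]\bigr)^{(1+\gamma)/2} \le C\,(\Pr\{\evk\})^{1+\gamma},
\]
uniformly in $k\ge 1$ for all $b$ large enough. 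Conditioning on $K$ then yields
\[
\E^Q[Z^{1+\gamma}(b)] = \sum_{k\ge 1} p_k^{-\gamma}\,\E_k[Z_k^{1+\gamma}(b)] \le C\sum_{k\ge 1} p_k^{-\gamma}(\Pr\{\evk\})^{1+\gamma}.
\]

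Using \eqref{LOC-ASYMP} for $\Pr\{\evk\}$ and \eqref{PK-SEL-LT1P5} for $p_k$, the analysis reduces to controlling
\[
\Sigma(b) := \sum_{k\ge 1}\frac{(\Fi(b+\nkm\mu)-\Fi(b+n_k\mu))^{1+\gamma}}{(\Gib(b+\nkm\mu)-\Gib(b+n_k\mu))^{\gamma}},
\]
which I would split at the threshold $k^* := \lceil\log_r(b/\mu)\rceil$. For near blocks ($k\le k^*$, $n_k\mu \lesssim b$), a local expansion at $b$ gives $\Fi(b+\nkm\mu)-\Fi(b+n_k\mu)\sim \bar F(b)(n_k-\nkm)\mu$ and analogously for $\Gib$, producing a telescoping contribution of order $b\,\bar F(b)^{1+\gamma}/\Gb(b)^\gamma$. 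For far blocks ($k>k^*$), Karamata's theorem \eqref{KARAMATA} yields $\Fi(x)\sim x^{1-\alpha}L(x)/(\alpha-1)$ and $\Gib(x)\sim x^{1-\beta}L(x)/(\beta-1)$, so each summand scales like $n_k^{(1-\alpha)+\gamma(\beta-\alpha)}L(n_k)^{1+\gamma}$; the geometric sum converges if and only if $(1-\alpha)+\gamma(\beta-\alpha)<0$, i.e.\ $\beta<\alpha+(\alpha-1)/\gamma$, which is precisely the specified range. Under this condition, the far-block tail is dominated by its leading term near $k^*$, of the same order as the near-block contribution. Invoking $\Gib(b)/\Gb(b)\sim b/(\beta-1)$ and $\Fi(b)\sim b\bar F(b)/(\alpha-1)$ then turns $\Sigma(b)\cdot\Gib(b)^\gamma$ into $O(\Fi(b)^{1+\gamma})=O(\Pr\{\tau_b<\infty\}^{1+\gamma})$ via \eqref{ASYMP}, establishing \eqref{LT1P5-EFF}.

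For the termination time one writes $\E^Q[\nu_b]\le\sum_{k\ge 1} p_k n_k$ and uses the same near/far split. Near blocks give $\sum_{k\le k^*}(n_k-\nkm)n_k\mu\cdot\Gb(b)/\Gib(b)$, a geometric series of ratio $r^2$ with leading term $n_{k^*}^2\sim(b/\mu)^2$; multiplied by $\Gb(b)/\Gib(b)\sim(\beta-1)/b$ this yields $O(b)$. Far blocks contribute a geometric series in $n_k^{2-\beta}L(n_k)$, summable because $\beta>2$, whose leading term is again $O(b)$. The main obstacle is the matching at the crossover $k\approx k^*$, where neither the local expansion at $b$ nor the Karamata expansion at $n_k\mu$ is individually tight; I would handle this by exploiting the monotonicity of $\Fi$, $\Gib$, $\bar F$, $\Gb$ together with Potter-type uniform bounds for the slowly varying $L(\cdot)$, so that the two regimes glue together without hidden losses. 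A secondary point is checking that the constant $C$ emerging from the combination of Lyapunov and Theorem~\ref{LOC-EFF} is genuinely uniform in $k$, which is ensured by the uniform-in-$k$ form of that theorem.
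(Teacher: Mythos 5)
Your proposal follows the paper's strategy in its first step (Jensen/Lyapunov upgrade of Theorem~\ref{LOC-EFF} to a uniform $(1+\gamma)$-moment bound, then conditioning on $K$ to reduce to $\sum_k p_k^{-\gamma}\Pr\{\evk\}^{1+\gamma}$), but then diverges. The paper (explicitly carried out in the proof of Theorem~\ref{GR1P5-EFF-RT}) first establishes a single \emph{uniform-in-$k$} bound on the ratio $\Pr\{\evk\}/p_k \leq C\,(b+\nkm\mu)^{\beta-\alpha}\Gib(b)$ by comparing the two integrals over the common block $(b+\nkm\mu,\,b+n_k\mu]$ endpoint-to-endpoint and using that $b+n_k\mu\le r(b+\nkm\mu)$; raising this to the power $\gamma$ and reassembling the remaining $\Pr\{\evk\}$ factors back into $\int_b^\infty u^{\gamma(\beta-\alpha)}\bar F(u)\,du$ then requires just one application of Karamata and never distinguishes $n_k\mu\lesssim b$ from $n_k\mu\gtrsim b$. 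Your route instead splits the sum at $k^*\approx\log_r(b/\mu)$ and applies a local linearization at $b$ for near blocks and the Karamata asymptotics of $\Fi,\Gib$ directly for far blocks. This is workable and correctly identifies both the geometric-summability condition $(1-\alpha)+\gamma(\beta-\alpha)<0\Leftrightarrow\beta<\alpha+(\alpha-1)/\gamma$ and the constraint $\beta>2$ for the termination time, but it pays an avoidable cost: you must glue the two regimes across $k\approx k^*$, which, as you note, needs Potter bounds and careful bookkeeping. The paper's single-bound-plus-integral trick sidesteps the crossover entirely, and it is also what the paper uses for $\E^Q[\nu_b]$ (rewriting $\sum_k p_k n_k$ as $\int u\,\Gb(u)\,du$ and applying Karamata with $\beta>2$) rather than a near/far split there either. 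One small slip in your far-block estimate: the summand scales like $n_k^{(1-\alpha)+\gamma(\beta-\alpha)}L(n_k)$, not $L(n_k)^{1+\gamma}$, since the common slowly varying factor $L$ in $\Fi$ and $\Gib$ cancels in the power $(1+\gamma)-\gamma$; this does not affect the convergence conclusion, but it matters if you try to pin down the exact order of the leading term at the crossover.
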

\begin{remark}
  If the estimator $Z(b)$ satisfies \eqref{LT1P5-EFF}, similar to how
  we arrived at \eqref{NSAMP}, it can be shown that
  $O(\epsilon^{-(1+\gamma^{-1})} \delta^{-\gamma^{-1}})$
  i.i.d. replications of $Z(b)$ are enough to produce estimates having
  relative error at most $\epsilon$ with probability at least
  $1-\delta.$ Now according to Theorem \ref{LT1P5-EFF-RT}, the
  expected termination time in each replication is $O(b).$ Thus with
  the $p_k$s chosen as in \eqref{PK-SEL-LT1P5}, we expend just
  $O(\epsilon^{-(1+\gamma^{-1})} \delta^{-\gamma^{-1}}b)$ units of
  computer time on an average, which is still linear in $b.$ The price
  we pay by not adhering to strong efficiency is the worse dependence
  on the parameters $\epsilon$ and $\delta.$
\end{remark}

It is further interesting to note that a vastly different
state-dependent methodology developed using Lyapunov inequalities in
\cite{Blanchet20122994} also hits identical barriers and provides
results similar to ours: They present algorithms that are both
strongly efficient and possess $O(b)$ expected termination time for
the case of tails having index $\alpha > 1.5;$ whereas when $\alpha
\in (1,1.5],$ they provide estimators satisfying \eqref{LT1P5-EFF}
along with $O(b)$ expected termination time of a simulation run.

\section{Proofs of key theorems}
\label{PROOFS-KEY-RES}
For proving Theorems \ref{GR2-EFF}, \ref{GR1P5-EFF-RT} and
\ref{LT1P5-EFF-RT}, which are on the efficiency of estimators $\{Z(b):
b > 0\},$ we first present a result pertaining to the efficiency of
component estimators $\{Z_k(b): k \geq 1\}$. Recall from Section
\ref{SUBSEC-SIM-LOC} that
\begin{align*}
  Z_k(b) := Z_{k,1}(b) + Z_{k,2}(b) + Z_{k,3}(b)
\end{align*}
is an unbiased estimator for $\Pr \{ \evk \},$ and
\[q_k(b) := \sum_{j=\nkm + 1}^{n_k} \bar{F}(b+j\mu).\] To aid the
analysis of second moment of estimators $Z_k(b),$ let $\Pr_k(\cdot)$
denote the composite measure induced due to the simulation of random
variables $Z_{k,j}, j =1,2,3$ independently according to measures
$\Pr_{k,j}, j=1,2,3,$ respectively. Let $\E_k[\cdot]$ denote the
corresponding expectation operator.
\begin{theorem}
  Under Assumptions \ref{LEFT-TAIL-LIGHTER-ASSUMP} and
  \ref{SMOOTHNESS-ASSUMP}, the family of estimators $\{Z_k(b): k \geq
  1, b > 0\}$ satisfies the following as $b \nearrow \infty:$
  \begin{align*} 
    \sup_{k: n_k < b^\eta} \frac{\E_k \left[ Z_k^2(b)\right]
    }{q_k^2(b)} \leq 1+o(1) \text{ and } \sup_{k: n_k \geq b^\eta}
    \frac{ \E_k \left[ Z_k^2(b)\right]}{q_k^2(b)} \leq c
  \end{align*}
  for some $c > 0$ and $\eta > 1.$
  \label{LOC-EFF}
\end{theorem}
We prove Theorem \ref{LOC-EFF} by analysing the second moment of
estimators $Z_{k,1}(\cdot), Z_{k,2}(\cdot)$ and $Z_{k,3}(\cdot)$
separately in the Lemmas \ref{VAR-LEM-1}, \ref{VAR-LEM-2} and
\ref{VAR-LEM-3} below.
\begin{lemma}
  Under Assumption \ref{LEFT-TAIL-LIGHTER-ASSUMP},
  \[ \sup_k \frac{\E_{k,1}\left[ Z_{k,1}^2(b)\right]}{q_k^2(b)} \leq
  1.\]
\label{VAR-LEM-1}
\end{lemma}
\begin{proof}
  Recall that $\Pr_{k,1}(\cdot)$ is the measure resulting due to the
  simulation of increments as in the two-step procedure specified in
  Section \ref{SIM-AK}. Since the quantity $\#\{X_i \geq b+i \mu :
  \nkm < i \leq n_k\}$ is at least 1 when the increments are generated
  from $\Pr_{k,1}(\cdot),$ we have $Z_{k,1}(b) \leq q_k(b).$
  Therefore,
\begin{align}
  \E_{k,1}\left[Z_{k,1}^2(b)\right] \leq q_k^2(b),
  \label{LOC-VAR-1-I}
\end{align}
which proves the claim.
\end{proof}

For a similar analysis on the second moment of estimators $Z_{k,2}(b)$
and $Z_{k,3}(b),$ we need the following results which are proved in
the appendix.
\begin{lemma}
  Under Assumption \ref{LEFT-TAIL-LIGHTER-ASSUMP}, there exists a
  constant $c_1 > 1$ such that $\exp(n_k\Lambda_k(\theta_k)) \leq c_1$
  for all $k,b.$
\label{NORM-TERM}
\end{lemma}
\begin{lemma}
  Under Assumption \ref{LEFT-TAIL-LIGHTER-ASSUMP}, there exists a
  positive constant $c_2$ such that,
  \[ \sup_{k \geq 1, b >0} \frac{n_k\bar{F}(b+\nkm\mu)}{q_k(b)} \leq
  c_2.\]
\label{OTHER-TERM}
\end{lemma}
\begin{proposition}
  Under Assumption \ref{LEFT-TAIL-LIGHTER-ASSUMP},
  \[ \sup_{k \geq 1} \left|\frac{\Pr \left\{\evk, A_k
      \right\}}{q_k(b)} -1\right| =
  O\left(b^{-\frac{\alpha-1}{2\alpha}}\right),\] $\text{ as } b
  \nearrow \infty.$
  \label{AK-ASYMP}
\end{proposition}
\begin{lemma}
  Under Assumptions \ref{LEFT-TAIL-LIGHTER-ASSUMP} and
  \ref{SMOOTHNESS-ASSUMP}, there exist constants $\eta > 1$ and $c_3$
  such that,
  \begin{align*} 
    \sup_{k: n_k < b^\eta} \frac{\Pr
      \{\evk,\bar{A}_k\} }{q_k(b)} &= o(1) \text{ and }\\
    \sup_{k: n_k \geq b^\eta} \frac{\Pr \{\evk,\bar{A}_k\} }{q_k(b)}
    &\leq c_3,
\end{align*}
as $b \nearrow \infty.$
\label{RES-TERM-ASYMP}
\end{lemma}
\noindent Using Lemmas \ref{NORM-TERM}, \ref{OTHER-TERM} and
\ref{RES-TERM-ASYMP}, we now present an asymptotic analysis on the
second moment of estimators $Z_{k,2}(\cdot)$ and $Z_{k,3}(\cdot).$
\begin{lemma}
  Under Assumptions \ref{LEFT-TAIL-LIGHTER-ASSUMP} and
  \ref{SMOOTHNESS-ASSUMP}, as $b \nearrow \infty,$
  \begin{align*} 
    \sup_{k: n_k < b^\eta} \frac{\E_{k,2} \left[ Z_{k,2}^2(b)\right]
    }{q_k^2(b)} = o(1) \text{ and } \sup_{k: n_k \geq b^\eta} \frac{
      \E_{k,2} \left[ Z_{k,2}^2(b)\right]}{q_k^2(b)} \leq c_4
\end{align*}
for some positive constant $c_4.$
\label{VAR-LEM-2}
\end{lemma}
\begin{proof}
  Since $\tau_b \leq n_k$ on the event $\{\evk\},$
\begin{align*}
  \exp\left(\tau_b\Lambda_k(\theta_k)\right) \mathbb{I}(\evk,B_k) \leq
  c_1,
\end{align*}
because of Lemma \ref{NORM-TERM}. Further note that
$\theta_kS_{\tau_b} \geq -\log(n_k\bar{F}(b+\nkm\mu))$ on $\{ \evk\}.$
Therefore from \eqref{LOC-EST-2},
\begin{align*}
  Z_{k,2}(b) \leq
  c_1\left(n_k\bar{F}(b+\nkm\mu)\right)\mathbb{I}(\evk,B_k), \text{
    for all } k.
\end{align*}
Now changing the expectation operator in the evaluation of second
moment of the estimator results in the following bound: for all $k,$
\begin{align*}
  \E_{k,2}\left[ Z_{k,2}^2(b) \right] = \E \left[ Z_{k,2}(b) \right]
  \leq c_1\left(n_k\bar{F}(b+\nkm \mu)\right)\Pr \{ \evk,B_k \}.
\end{align*}
\[ \hspace{-30pt} \text{Therefore } \frac{\E_{k,2} \left[
    Z_{k,2}^2(b)\right] }{q_k^2(b)} \leq
c_1\frac{\left(n_k\bar{F}(b+\nkm \mu)\right)}{q_k(b)}\frac{\Pr \{
  \evk,\bar{A}_k \}}{q_k(b)}.\] Then it follows from Lemmas
\ref{OTHER-TERM} and \ref{RES-TERM-ASYMP} that, as $b \nearrow
\infty,$
\begin{align*}
  \sup_{k: n_k < b^\eta} \frac{\E_{k,2} \left[ Z_{k,2}^2(b)\right]
  }{q_k^2(b)} = o(1), \text{ and } \sup_{k: n_k \geq b^\eta}
  \frac{\E_{k,2} \left[ Z_{k,2}^2(b)\right] }{q_k^2(b)} \leq c_1c_2c_3
  =: c_4 < \infty,
\end{align*}
thus proving the claim.
\end{proof}
\begin{lemma}
  Under Assumptions \ref{LEFT-TAIL-LIGHTER-ASSUMP} and
  \ref{SMOOTHNESS-ASSUMP}, as $b \nearrow \infty,$
  \begin{align*} 
    \sup_{k: n_k < b^\eta} \frac{\E_{k,3} \left[ Z_{k,3}^2(b)\right]
    }{q_k^2(b)} = o(1) \text{ and } \sup_{k: n_k \geq b^\eta} \frac{
      \E_{k,3} \left[ Z_{k,3}^2(b)\right]}{q_k^2(b)} \leq c_5
\end{align*}
for some positive constant $c_5.$
\label{VAR-LEM-3}
\end{lemma}
\begin{proof}
  When the increments are generated as prescribed in the two-step
  procedure in Section \ref{SIM-ABK}, we have $\#\{ X_i \geq b+ \nkm
  \mu : 1 < i \leq n_k \} \geq 1,$ and hence,
  \[ Z_{k,3}(b) \leq n_k \bar{F}(b+\nkm \mu)\mathbb{I}\left(\evk,
    \bar{A}_k\cap\bar{B}_k \right). \] Now a bound on the second
  moment of the estimator can be obtained as before:
\begin{align*}
  \E_{k.3}\left[Z_{k,3}^2(b) \right] = \E\left[ Z_{k.3}(b) \right]
  \leq n_k\bar{F}(b+\nkm \mu)\Pr \left\{\evk, \bar{A}_k\cap\bar{B}_k
  \right\}.
\end{align*}
\[ \hspace{-30pt} \text{Therefore } \frac{\E_{k,3} \left[
    Z_{k,3}^2(b)\right] }{q_k^2(b)} \leq \frac{\left(n_k\bar{F}(b+\nkm
    \mu)\right)}{q_k(b)}\frac{\Pr \{ \evk,\bar{A}_k \}}{q_k(b)}.\]
Then it follows from Lemmas \ref{OTHER-TERM} and \ref{RES-TERM-ASYMP}
that, as $b \nearrow \infty,$
\begin{align*}
  \sup_{k: n_k < b^\eta} \frac{\E_{k,3} \left[ Z_{k,3}^2(b)\right]
  }{q_k^2(b)} = o(1), \text{ and } \sup_{k: n_k \geq b^\eta}
  \frac{\E_{k,3} \left[ Z_{k,3}^2(b)\right] }{q_k^2(b)} \leq c_2c_3 =:
  c_5 < \infty,
\end{align*}
thus establishing the claim.
\end{proof}

\paragraph{Proof of Theorem \ref{LOC-EFF}}
Since $\{Z_{k,i}(b):i=1,2,3\}$ are independent, for $i \neq 1,$
  \begin{align*}
    \frac{\E_{k}\left[Z_{k,1}(b)Z_{k,i}(b)\right]}{q_k^2(b)} &=
    \frac{\E_{k,1}\left[Z_{k,1}(b)\right]}{q_k(b)}
    \frac{\E_{k,i}\left[Z_{k,i}(b)\right]}{q_k(b)}\\
    &\leq
    \frac{\Pr\{\evk,A_k\}}{q_k(b)}\frac{\Pr\{\evk,\bar{A}_k\}}{q_k(b)}.
  \end{align*}
  Then from Proposition \ref{AK-ASYMP} and Lemma \ref{RES-TERM-ASYMP},
  we have that as $b \nearrow \infty,$
  \[ \sup_{k:n_k < b^\eta}
  \frac{\E_{k}\left[Z_{k,1}(b)Z_{k,i}(b)\right]}{q_k^2(b)} = o(1),
  \text{ and } \sup_{k:n_k \geq b^\eta}
  \frac{\E_{k}\left[Z_{k,1}(b)Z_{k,i}(b)\right]}{q_k^2(b)} <
  \infty. \] Similarly from Lemma \ref{RES-TERM-ASYMP}, as $b \nearrow
  \infty,$
  \[ \sup_k \frac{\E_{k}\left[Z_{k,2}(b)Z_{k,3}(b)\right]}{q_k^2(b)}
  \leq \sup_k \frac{\Pr\{\evk,\bar{A}_k\}^2}{q_k^2(b)} = o(1).
  \]
  Since $Z_k(b) = Z_{k,1}(b) + Z_{k,2}(b) + Z_{k,3}(b),$ we have
  \[\E_k\left[Z_k^2(b)\right] =
  \sum_{i,j=1}^3\E_k\left[Z_{k,i}(b)Z_{k,j}(b)\right].\] Combining
  above observations with the results of Lemmas \ref{VAR-LEM-1},
  \ref{VAR-LEM-2} and \ref{VAR-LEM-3}, we conclude that as $b \nearrow
  \infty,$
  \begin{align*}
    \sup_{k: n_k < b^\eta}\frac{\E_k\left[Z_k^2(b)\right]}{q_k^2(b)}
    \leq 1+ o(1) \text{ and } \sup_{k: n_k \geq
      b^\eta}\frac{\E_k\left[Z_k^2(b)\right]}{q_k^2(b)} \leq c
  \end{align*}
  for some positive constant $c.$ \hfill{$\Box$} 

\vspace{10pt} 
\noindent The following uniform bounds will be useful:
\begin{lemma}
  For all $k \geq 1,$
  \[ q_k(b) \leq \frac{1}{\mu}\left(\bar{F}_I(b+\nkm \mu) -
    \bar{F}_I(b+n_k \mu)\right).\] Further as $b \nearrow \infty,$
  \[ q_k(b) \geq (1-o(1))\frac{1}{\mu}\left(\bar{F}_I(b+\nkm \mu) -
    \bar{F}_I(b+n_k \mu)\right), \] uniformly in $k.$
\label{QKB-FINT-REL-LEM}
\end{lemma}
\begin{proof}
  For any $k \geq 1,$
  \begin{align*}
    q_k(b) = \sum_{i=\nkm + 1}^{n_k}\bar{F}(b+ i \mu)\leq \sum_{i=\nkm
      + 1}^{n_k}\int_{i-1}^{i}\bar{F}(b+u\mu)du =
    \int_{\nkm}^{n_k}\bar{F}(b+u\mu)du.
  \end{align*}
  Changing variables from $u$ to $v=b+u\mu$ results in,
  \begin{align*}
    q_k(b) \leq \frac{1}{\mu}\int_{b+\nkm\mu}^{b+n_k\mu} \bar{F}(v)dv,
  \end{align*}
  which establishes the upper bound because $\bar{F}_I(x) :=
  \int_x^\infty\bar{F}(u)du.$ 

  For the lower bound, see that
  \begin{align*}
    q_k(b) &= \sum_{i=\nkm + 1}^{n_k}\bar{F}(b+ i \mu) \geq
    \sum_{i=\nkm + 1}^{n_k}\int_{i}^{i+1}\bar{F}(b+u\mu)du\\
    &= \int_{\nkm+1}^{n_k+1}\bar{F}(b+u\mu)du.
  \end{align*}
  Now after changing variables from $u$ to $v=b+u\mu,$ we use the
  long-tailedness of $\bar{F}_I(\cdot)$ to see that, given $\epsilon >
  0,$ for large values of $b,$
  \begin{align*}
    q_k(b) &\geq \frac{1}{\mu}\left(\bar{F}_I(b+ (\nkm +1)\mu) -
      \bar{F}_I(b+ (n_k+1) \mu)\right)\\
    &\geq (1-\epsilon)\frac{1}{\mu}\left( \bar{F}_I(b+ \nkm\mu) -
      \bar{F}_I(b+ n_k \mu) \right)
  \end{align*}
  for all $k.$
\end{proof}

\paragraph{Proof of Theorem \ref{GR2-EFF}}
Recall that the overall estimator is, \[Z(b) =
\frac{Z_K(b)}{p_{_K}},\] where $p_k$ is as in \eqref{PK-SEL}. Second
moment of the estimator $Z(b)$ is bounded as below:
\begin{align}
  \E^Q[Z^2(b)] &= \E^Q\left[ \left(\frac{Z_K(b)}{p_{_K}}
    \right)^2\right]
  \nonumber\\
  &= \E^Q\left[ \E^Q\left[\frac{Z_K^2(b)}{q^2_{_K}(b)}
      \frac{q^2_{_K}(b)}{p^2_{_K}}; n_{_K} < b^\eta
      \left.\frac{}{}\right| K \right]\right] \nonumber\\
  &\hspace{30pt}+ \E^Q\left[
    \E^Q\left[\frac{Z_K^2(b)}{q^2_{_K}(b)}\frac{q^2_{_K}(b)}{p^2_{_K}};
      n_{_K} \geq b^\eta \left.\frac{}{}\right| K
    \right]\right] \label{COND-INT}
\end{align}
From the definition of $p_k$ and Lemma \ref{QKB-FINT-REL-LEM}, we have
$q^2_{_K}(b) \leq \bar{F}_I^2(b)p^2_{_K}.$ Combining this with Theorem
\ref{LOC-EFF} it follows that,
\begin{align*}
  \frac{\E^Q[Z^2(b)]}{\bar{F}_I^2(b)} &\leq
  \E^Q\left[\E^Q\left[\frac{Z_K^2(b)}{q^2_{_K}(b)}; n_{_K} < b^\eta
      \left.\frac{}{}\right| K \right]\right] +
  \E^Q\left[\E^Q\left[\frac{Z_K^2(b)}{q^2_{_K}(b)}; n_{_K} \geq b^\eta
      \left.\frac{}{}\right| K \right]\right]\\
  &\leq 1 + o(1) + c \Pr\{n_{_K} \geq b^\eta\}\\
  &\leq 1 + o(1) +
  O\left(\frac{\bar{F}_I(b+b^\eta)}{\bar{F}_I(b)}\right) = 1+o(1),
\end{align*}
as $b \nearrow \infty.$ The last inequality follows from observing
that $\Pr\{ n_{_K} \geq b^\eta\} = \sum_{k:n_k \geq b^\eta}p_k.$ Since
$\eta > 1,$ we have the asymptotically vanishing relative error
property of the estimators $(Z(b): b > 0).$ \hfill\(\Box\)

\paragraph{Proof of Theorem \ref{GR2-RT}}
Recall that $\nu_b$ denotes the maximum of indices of the increment
random variables ($X_i$s) considered for simulation in a particular
simulation run. From the sampling procedures in Section
\ref{SUBSEC-SIM-LOC}, it is clear that $\nu_b \leq n_{_K}.$ Therefore,
  \begin{align}
    \E^Q [ \nu_b] &\leq \sum_{k \geq 1} p_kn_k \nonumber\\
    &= rp_1 + \sum_{k \geq 2}r^k p_k \nonumber\\
    &= \frac{1}{\Fi(b)}\left(r\int_{b}^{b+r\mu} \bar{F}(u)du + \sum_{k
        \geq 1} r^{k+1} \int_{b+r^k\mu}^{b+r^{k+1}\mu}\bar{F}(u)du
    \right).
    \label{NSTEPS-INT}
  \end{align}
  \begin{align*}
    \text{Since } r^k \int_{b+r^k \mu}^{b+r^{k+1}\mu}&\bar{F}(u)du
    = \frac{b + r^{k}\mu
      -b}{\mu}\int_{b+r^{k}\mu}^{b+r^{k+1}\mu}\bar{F}(u)du\\
    &\leq
    \frac{1}{\mu}\left(\int_{b+r^{k}\mu}^{b+r^{k+1}\mu}u\bar{F}(u)du -
      b\int_{b+r^{k}\mu}^{b+r^{k+1}\mu}\bar{F}(u)du \right),
  \end{align*}
  \begin{align}
    \text{we write }\sum_{k \geq 1}r^{k+1}&
    \int_{b+r^{k}\mu}^{b+r^{k+1}\mu}\bar{F}(u)du \nonumber\\
    &\leq \frac{r}{\mu}
     \sum_{k \geq
        1}\left(\int_{b+r^{k}\mu}^{b+r^{k+1}\mu}u\bar{F}(u)du -
      b\int_{b+r^{k}\mu}^{b+r^{k+1}\mu}\bar{F}(u)du \right) \nonumber\\
    &=\frac{r}{\mu}\left(\int_{b+r\mu}^{\infty}u\bar{F}(u)du -
      \int_{b+r\mu}^{\infty}\bar{F}(u)du \right) \label{DOUB-INT-TAIL}\\
    &\leq \frac{r + o(1)}{\mu} \left( \frac{(b+r\mu)^2}{\alpha-2} -
      b\frac{b+r\mu}{\alpha-1}\right)\bar{F}(b+r\mu), \nonumber\\
    &= \frac{r+o(1)}{\mu (\alpha-1)(\alpha-2)} b^2\bar{F}(b), \text{
      as } b \nearrow \infty.  \nonumber
  \end{align}
  where the penultimate step follows from Karamata's theorem (see
  \eqref{KARAMATA}), and the final step just uses long-tailed nature
  of $\bar{F}(\cdot).$ Also note that: $\int_{b}^{b+r\mu} \bar{F}(u)du
  \leq r\mu \bar{F}(b),$ and by application of Karamata's theorem, we
  have $\Fi(b) \sim {bF(b)}/(\alpha-1),$ as $b \nearrow \infty.$
  Therefore from \eqref{NSTEPS-INT},
  \begin{align*}
    \E^Q [ \nu_b] &\leq \frac{r+o(1)}{\mu(\alpha-2)} b, \text{ as } b
    \nearrow \infty,
  \end{align*}
  thus yielding the required bound on the expected termination
  time. \hfill\(\Box\)
\begin{remark} \label{remark:remark_infinite_variance}
  Similar to how we arrived at \eqref{DOUB-INT-TAIL}, lower bounds can
  be obtained to show that $\E^Q[\nu_b] = \Omega \left(
    \int_{b}^{\infty} u \bar{F}(u)du\right).$ If the tail index
  $\alpha < 2, \int_{b}^{\infty} u \bar{F}(u)du$ turns out to be
  infinite, and subsequently $\E^Q[\nu_b] = \infty.$ Though the
  assignment of $p_k$s in \eqref{PK-SEL} yields vanishing relative
  error for any $\alpha > 1,$ it fails to provide algorithms which
  have finite expected termination time when the increment random
  variables $X$ have infinite variance (e.g., when $\alpha <
  2$), thus making this choice of $p_k$ not suitable for practice.
\end{remark}

\paragraph{Proof of Theorem \ref{GR1P5-EFF-RT}} We obtain upper
bounds for both the variance of the estimator $Z(b)$ and the expected
termination time.\\
1. \textit{Variance of $Z(b)$:} Since $Q(K=k) = p_k,$
\begin{align}
  \E^Q[Z^2(b)] &= \E^Q\left[ \frac{Z^2_K(b)}{\pK^2}\right] = \sum_k
  p_k \frac{\E^Q[Z_k^2(b)]}{p_k^2}\\
  &= \sum_k \frac{\E^Q[Z_k^2(b)]}{q^2_k(b)} \frac{q^2_k(b)}{p_k}.
  \label{INF-INTER1}
\end{align}
Following Lemma \ref{QKB-FINT-REL-LEM} and the assignment of $p_k$s as
in \eqref{PK-SEL-LT2}, we can write,
\begin{align*}
  \frac{q_k(b)}{p_k} &\leq \frac{ \Fi(b+\nkm \mu) -
    \Fi(b+n_k\mu)}{\Gib(b+\nkm \mu) - \Gib(b+n_k\mu)} \Gib(b).
\end{align*}
To obtain an upper bound, we note the following:
\begin{align*}
  \Fi(b+\nkm \mu) - \Fi(b+n_k\mu) &= \int_{b+\nkm
    \mu}^{b+n_k\mu}\bar{F}(u)du\\
  &\leq (n_k-\nkm)\mu \bar{F}(b+\nkm \mu),\\
  \Gib(b+\nkm \mu) - \Gib(b+n_k\mu) &= \int_{b+\nkm
    \mu}^{b+n_k\mu}\Gb(u)du\\
  &\geq (n_k-\nkm)\mu \Gb(b+n_k\mu),\text{
    and }\\
   \frac{\Gb(b+\nkm \mu)}{\Gb(b+n_k \mu)} 
   &\leq r^{\beta}+o(1), \text{ as } b \nearrow \infty.
 \end{align*}
 The last inequality follows by observing that $b+n_k \mu \leq
 r(b+\nkm \mu)$ and subsequently from the regularly varying nature of
 $\Gb(\cdot).$ Therefore as $b \nearrow \infty,$
\begin{align}
  \frac{q_k(b)}{p_k} &\leq \frac{\Gb(b+\nkm \mu)}{\Gb(b+n_k
    \mu)}\frac{\bar{F}(b+\nkm
    \mu)}{\Gb(b+\nkm \mu)} \Gib(b)\nonumber\\
  &= (r^{\beta}+ o(1)) (b+\nkm \mu)^{\beta-\alpha}\Gib(b),
  \label{FRAC-INTER}
\end{align}
for all $k,$ because ${\bar{F}(x)}/{\Gb(x)} = x^{\beta-\alpha}.$
Combining this with Theorem \ref{LOC-EFF}, it follows from
\eqref{INF-INTER1} that 
\begin{align*}
  \E^Q[Z^2(b)] &\leq (cr^{\beta}+o(1))\Gib(b) \sum_k (b+\nkm
  \mu)^{\beta-\alpha}q_k(b)\\
  &\leq (cr^{\beta} + o(1)) \Gib(b)\sum_k (b+\nkm \mu)^{\beta-\alpha}
  \int_{b+\nkm \mu}^{b+n_k \mu} \bar{F}(u)du,\\
  &\leq (cr^{\beta}+o(1))\Gib(b)\sum_k \int_{b+\nkm \mu}^{b+n_k \mu}
  u^{\beta-\alpha}\bar{F}(u)du\\
  &\leq (cr^{\beta}+o(1)) \Gib(b)
  \int_{b}^{\infty}u^{\beta-\alpha}\bar{F}(u)du
\end{align*}
as $b \nearrow \infty.$ Since $2\alpha-\beta > 1,$ it follows from
Karamata's theorem (cf. \eqref{KARAMATA}) that
\[ \E^Q[Z^2(b)] \leq (cr^{\beta}+o(1))
\Gib(b)b^{\beta-\alpha+1}\frac{\bar{F}(b)}{2\alpha-\beta-1}, \text{ as
} b \nearrow \infty.\] Further $(\alpha-1)\Fi(b) \sim b\bar{F}(b)$ and
$b^{\beta-\alpha}\Gib(b) \sim \Fi(b),$ as $b \nearrow \infty.$
Therefore,
\[ \varlimsup_{b \rightarrow \infty} \frac{\E^Q[Z^2(b)]}{\Fi^2(b)}
\leq \frac{(\alpha-1)cr^{\beta}+o(1)}{2\alpha-\beta-1} < \infty.\] Now
since $\Pr \{\tau_b < \infty \} \sim \mu^{-1}\Fi(b),$ we have strong
efficiency.\\\\
2. \textit{Expected termination time:} Since $\nu_b \leq n_{_K},
\E^Q[\nu_b] \leq \E^Q[n_{_K}] = \sum_k p_kn_k.$ For the choice of
$p_k$ in \eqref{PK-SEL-LT2}, following exactly the same steps in the
proof of Theorem \ref{GR2-RT}, we arrive at:
\begin{align*}
  \E^Q[\nu_b] &\leq \frac{r}{\mu}\left( \mu \int_b^{b+r\mu}\Gb(u)du +
    \int_{b+r\mu}^{\infty} u\Gb(u)du -
    b\int_{b+r\mu}^{\infty}\Gb(u)du\right).
\end{align*}
Since $\Gb(\cdot)$ is regularly varying with tail index larger than 2,
by application of Karamata's theorem, we have:
\[ \int_{b+r\mu}^{\infty} u\Gb(u)du \sim
\frac{(b+r\mu)^2}{\beta-2}\Gb(b+r\mu),\]
which would not have been the case if we had persisted with using
$\Fi(\cdot)$ instead of $\Gib(\cdot)$ for $p_k.$ Again following the
remaining steps in the proof of Theorem \ref{GR2-RT}, we conclude
that:
\[\E^Q [ \nu_b] \leq \frac{r+o(1)}{\mu(\beta-2)} b, \text{ as } b
\nearrow \infty, \] thus yielding finite termination time even when
the zero-variance measure fails to offer this  desirable
property. \hfill\(\Box\)

\paragraph{Proof of Theorem \ref{IMPOSS}}
Since $Q(K=k) = p_k,$ see that:
\[\E^Q[Z^2(b)] = \E^Q\left[\frac{Z^2_K(b)}{\pK^2}\right]= \sum_k
\frac{\E^Q[Z_k^2(b)]}{p_k} \geq \sum_k \frac{\Pr \{ \evk \}^2}{p_k},\]
because of Jensen's inequality. To arrive at a contradiction, let us
assume that both $\E^Q[Z^2(b)]$ and $\E^Q[\nu_b]$ are finite. Then,
\begin{align}
  \E^Q[Z^2(b)]\E^Q[\nu_b] &\geq \left(\sum_k \frac{\Pr \{ \evk
      \}^2}{p_k} \right) \left(\sum_k p_k n_k
  \right) \nonumber\\
  &\geq \left( \sum_k \frac{\Pr \{ \evk \}}{\sqrt{p_k}}\cdot
    \sqrt{p_kn_k} \right)^2 \nonumber\\
  &= \left( \sum_k \sqrt{n_k} \Pr \{ \evk
    \}\right)^2. \label{IMPOSS-INTER}
\end{align}
where the penultimate step follows from Cauchy-Schwarz
inequality. Then from Proposition \ref{AK-ASYMP} and Lemma
\ref{QKB-FINT-REL-LEM}, it is immediate that
\begin{align*}
  \sum_k \sqrt{n_k}\ \Pr \{ \evk \} &\geq (1-o(1)) \sum_k
  \sqrt{n_k}\ q_k(b)\\
  &\geq (1-o(1)) \sum_k \sqrt{n_k}
  \int_{\nkm \mu}^{n_k\mu}  \bar{F}(b + u)du\\
  &\geq \frac{1-o(1)}{\sqrt{\mu}} \sum_k \int_{\nkm
    \mu}^{n_k\mu} \sqrt{u}\bar{F}(b + u)du\\
  &= \frac{1-o(1)}{\sqrt{\mu}} \int_{0}^{\infty} \sqrt{u}\bar{F}(b +
  u)du.
 \end{align*}
 Now it can be seen easily that the RHS is finite only when $\alpha
 \geq 1.5,$ via the following change of variable and the subsequent
 integration of the resulting regularly varying tail:
\begin{align*}
  \int_{0}^{\infty} \sqrt{u}\bar{F}(b + u)du &= \int_{b}^{\infty}
  \sqrt{u-b}\bar{F}(u)du\\
  &\geq \int_{b^2}^{\infty}
  \sqrt{u}\cdot\sqrt{1-\frac{b}{u}}\bar{F}(u)du\\
  &\geq \sqrt{1-\frac{1}{b}} \int_{b^2}^{\infty}\sqrt{u}\bar{F}(u)du,
\end{align*}
which cannot be finite if $\alpha < 1.5,$ thus arriving at the desired
contradiction. Therefore from \eqref{IMPOSS-INTER}, we conclude that
we cannot have both the second moment of $Z(b)$ and the expected
termination time $\E^Q[\nu_b]$ to be simultaneously finite if the
tail index $\alpha < 1.5.$ \hfill\(\Box\)\\

\paragraph{Proof of Theorem \ref{LT1P5-EFF-RT}}
The proof is similar to that of Theorem \ref{GR1P5-EFF-RT}, and we
provide only an outline of the steps involved. Since $Q(K=k)=p_k,$
\begin{align*}
  \E^Q[Z^{1+\gamma}(b)] &= \E^Q\left[
    \frac{Z^{1+\gamma}_K(b)}{p_{_K}^{1+\gamma}} \right] = \sum_k
  \frac{\E_k \left[Z^{1+\gamma}_k(b)\right]}{p_k^{1+\gamma}}p_k\\
  &\leq \sum_k \left( \frac{\E_k
      \left[Z^2_k(b)\right]}{q_k^2(b)}\right)^{\frac{1+\gamma}{2}}
  \left(\frac{q_k(b)}{p_k}\right)^\gamma q_k(b)
\end{align*}
Now from Theorem \ref{LOC-EFF} and \eqref{FRAC-INTER} , following the
routine calculation in the proof of Theorem \ref{GR1P5-EFF-RT}, we
deduce that
\begin{align*}
  \E^Q[Z^{1+\gamma}(b)] &\leq \left(c^{\frac{1+\gamma}{2}} r^{\beta
      \gamma}+o(1)\right) \left( \Gib(b) \right)^\gamma \sum_k \left(
    b + \nkm
    \mu\right)^{\gamma (\beta - \alpha)} q_k(b)\\
  &\leq \left(c^{\frac{1+\gamma}{2}}r^{\beta \gamma}+o(1)\right)
  \left( \Gib(b) \right)^\gamma
  \int_b^{\infty}u^{\gamma(\beta-\alpha)}\bar{F}(u)du,
\end{align*}
as $b \nearrow \infty.$ Since $\beta$ is smaller than
$\alpha+\gamma^{-1}(\alpha-1)$ as in the statement of Theorem
\ref{LT1P5-EFF-RT}, the tail index of the integrand, $\alpha - \gamma
(\beta-\alpha) > 1.$ Therefore we can apply Karamata's theorem to
conclude that
\begin{align*}
  \E^Q[Z^{1+\gamma}(b)] &\leq \left(c^{\frac{1+\gamma}{2}}r^{\beta
      \gamma}+o(1)\right) \left( \Gib(b) \right)^\gamma
  \frac{b^{\gamma (\beta - \alpha)+1}}{\alpha -
    \gamma(\beta-\alpha)-1}\bar{F}(b), \text{ as } b \nearrow \infty.
\end{align*}
Now observing that $(\alpha-1)\Fi(b) \sim b\bar{F}(b),
b^{\beta-\alpha}\Gib(b) \sim \Fi(b),$ and $\Pr \{ \tau_b < \infty\}
\sim \mu^{-1}\Fi(b)$ as $b \nearrow \infty,$ we have:
\[ \varlimsup_{b \rightarrow \infty}
\frac{\E^Q[Z^{1+\gamma}(b)]}{\Pr\{\tau_b < \infty\}^{1+\gamma}} \leq
\frac{\mu^2 (\alpha-1) c^{\frac{1+\gamma}{2}}r^{\beta \gamma}+o(1)
}{\alpha - \gamma(\beta-\alpha)-1} < \infty.\] Since $\beta$ is
ensured to be larger than 2, the same proof for $\E^Q[\nu_b] = O(b)$
goes through. \hfill\(\Box\)

\section{Simulation of $\tau_b < \tau$}
\label{SEC-BCYC-SIM}
Let $X,X_1,X_2,\ldots$ be an iid collection of random variables
satisfying the following assumption:
\begin{assumption}
  The tail probabilities of $X$ are given by $\bar{F}(x) := \Pr \{ X >
  x \} = x^{-\alpha}L(x),$ for some slowly varying function $L(\cdot)$
  and $\alpha > 2.$ Further, $\mu := -\E X > 0.$
\label{REG-VAR-ASSUMP}
\end{assumption} 
As in the Sections \ref{SEC-LD} and \ref{SEC-SIM-METH}, let $S_0 = 0,
S_n = X_1 + \ldots + X_n,$ for $n \geq 1.$ Further, let $M_n = \max_{k
  \leq n}S_k, \tau = \inf \{ n \geq 1: S_n \leq 0\}$ and $\tau_b =
\inf \{ n \geq 1: S_n > x \}$ for $b > 0.$ Our aim is to simulate the
tail probabilities of busy cycle maximum $M_{\tau}.$ In other words, we
aim to simulate $\Pr \{ M_\tau > b\} = \Pr \{\tau_b < \tau\}$
efficiently, as $b \nearrow \infty.$ Under Assumption
\ref{REG-VAR-ASSUMP}, it is well-known that (see, for example, Theorem
2.1 of \cite{asmussen1998})
\begin{align}
  \label{BCYC-ASYMP}
  \Pr \{\tau_b < \tau\} \sim \E \tau \bar{F}(b), \text{ as } b
  \nearrow \infty.
\end{align}

As in the simulation of $\{ S_n > b\},$ we partition the probability
of interest into dominant and residual components as below:
\[ \Pr \left\{ \tau_b < \tau \right\} = \Pr \left\{ \tau_b < \tau,
  \max_{k \leq \tau_b} X_k > b \right\} + \Pr \left\{ \tau_b < \tau,
  \max_{k \leq \tau_b} X_k \leq b \right\}.\] Since $S_n > 0$ for all
$n < \tau,$ the first component has a simple representation:
\begin{align*}
  \Pr \left\{ \tau_b < \tau, \max_{k \leq \tau_b} X_k > b \right\} &=
  \Pr \left\{ \tau_b < \tau,  X_{\tau_b} > b \right\}\\
  &= \sum_{n=1}^\infty \Pr \left\{ S_i \in (0,b] \text{ for }
    i=1,\ldots,n-1, X_n > b \right\}\\
  &= \sum_{n=1}^\infty \Pr \left\{ S_i \in (0,b] \text{ for }
    i=1,\ldots,n-1 \right\} \bar{F}(b)\\
  &= \bar{F}(b) \sum_{n=1}^\infty \Pr \left\{ \tau_b \wedge \tau > n-1
  \right\} \\
  &= \E \left[ \tau_b \wedge \tau \right] \bar{F}(b).
\end{align*}
Therefore to estimate $\Pr \left\{ \tau_b < \tau, \max_{k \leq \tau_b}
  X_k > b \right\},$ we draw samples of increments $X_n$ naively from
the distribution $F(\cdot)$, and compute the following as the
estimator:
\begin{align}
  \label{DOM-EST-BCYC}
  \Zd (b) := (\tau_b \wedge \tau) \bar{F}(b).
\end{align}
Now it is straightforward to see that 
\begin{align*}
  \E \left[ \Zd \right] &= \Pr \left\{ \tau_b < \tau, \max_{k \leq
      \tau_b} X_k > b \right\},\\
  \textnormal{Var} \left[ \Zd \right] &= \textnormal{Var} \left[\tau_b
  \wedge \tau \right] \bar{F}^2(b),
\end{align*}
and hence, due to \eqref{BCYC-ASYMP} and monotone convergence,
\begin{align}
  \label{VAR-DOM-BCYC}
  \varlimsup_{b \rightarrow \infty} \frac{\textnormal{Var} \left[ \Zd
    \right]}{ \Pr \left\{ \tau_b < \tau \right\}^2} = \varlimsup_{b
    \rightarrow \infty} \frac{\textnormal{Var} \left[\tau_b \wedge
      \tau \right]}{\E \left[\tau \right]^2} =
  \frac{\textnormal{Var}\left[\tau\right]}{\E \left[ \tau \right]^2}.
\end{align}
To estimate the residual probability $\Pr \left\{ \tau_b < \tau,
  \max_{k \leq \tau_b} X_k \leq b \right\},$ we perform exponential
twisting as in Section \ref{SUBSEC-SIM-AR}. Draw samples of $\{X_n : n
\leq \tau_b \wedge \tau \}$ independently from $F_\theta(\cdot)$ given
by:
\begin{align}
  \label{RES-EXP-TWIST-DIST}
  \frac{dF_\theta}{dF}(x) = \exp \left( \theta_b x -
    \Lambda_b(\theta_b)\right)\mathbf{1}(x \leq b),
\end{align}
where
\begin{align*}
  \Lambda_b(\theta) &:= \log \left( \int_{-\infty}^b \exp \left(
      \theta x\right) F(dx) \right) \text{ for } \theta > 0, \text{ and }\\
  \theta_b &:= -\frac{\log b\bar{F}(b)}{b}.
\end{align*}
Then the resulting estimator is given by
\begin{align}
  \label{RES-EST-BCYC}
  \Zr (b) := \exp \left( - \theta_b S_{\tau_b} + \tau_b
    \Lambda_b(\theta_b)\right)\mathbb{I} \left(\tau_b < \tau, \max_{k
      \leq \tau_b} X_k \leq b \right).
\end{align}
For proving efficiency results of $\Zr (b),$ we shall need the
following results that are proved in the appendix.
\begin{proposition}
  Under Assumption \ref{REG-VAR-ASSUMP},
  \[\Pr \left\{ \tau_b < \tau,  \max_{k \leq \tau_b} X_k \leq b \right\} =
  O\left( \frac{\bar{F}(b)}{b}\right), \text{ as } b \nearrow
  \infty.\]
  \label{RES-BCYC-ASYMP}
\end{proposition}
\begin{lemma}
  Under Assumption \ref{REG-VAR-ASSUMP}, we have that
  \[ \varlimsup_{b \rightarrow \infty}\sup_{n \geq 1} \exp \left(
    n\Lambda_b (\theta_b) \right) \leq 1.\]
  \label{BCYC-NORM-TERM}
\end{lemma}
Let $\Pr_\theta(\cdot)$ and $\E_\theta[\cdot]$ denote the probability
measure and the corresponding expectation operator when the increments
$X_n$ are drawn independent from $F_\theta(\cdot).$ Since $S_{\tau_b}
> b,$ it follows from the definition of $\theta_b$ and
\eqref{RES-EST-BCYC} that
\begin{align*}
  \E_\theta \left[ \Zr ^2(b) \right] = \E \left[ \Zr (b) \right] &\leq
  \E \left[ \exp \left( -\theta_b b \right) \exp \left( \tau_b
      \Lambda_b(\theta_b)\right) ; \tau_b < \tau, \max_{k \leq \tau_b}
    X_k > b \right]\\
  &\leq b \bar{F}(b) \sup_{n \geq 1} \exp \left( n\Lambda_b (\theta_b)
  \right) \Pr \left\{ \tau_b < \tau, \max_{k \leq \tau_b}
    X_k > b \right\}\\
  &= O \left( \bar{F}^2(b)\right), \text{ as } b \nearrow \infty,
\end{align*}
because of Lemma \ref{BCYC-NORM-TERM} and Proposition
\ref{RES-BCYC-ASYMP}. Then due to \eqref{BCYC-ASYMP}, it is immediate
that
\begin{align}
  \frac{\E_\theta \left[ \Zr^2 (b) \right]}{\Pr \left\{ \tau_b < \tau
    \right\}^2} = O(1), \text{ as } b \nearrow \infty.
\label{VAR-RES-BCYC}
\end{align}
\begin{theorem}
  If the realizations of the estimators $\Zd (b)$ and $\Zr (b)$ are
  generated respectively from the measures $\Pr(\cdot)$ and
  $\Pr_{\theta} (\cdot),$ and if we let
  \[ Z(b) := \Zd(b) + \Zr(b),\] then under Assumption
  \ref{REG-VAR-ASSUMP}, the family of estimators $(Z(b): b >0)$ are
  strongly efficient for the estimation of $\Pr\{\tau_b < \tau\},$ as
  $b \nearrow \infty;$ that is,
  \[ \frac{\textnormal{Var} \left[Z(b)\right]}{\Pr\{\tau_b < \tau\}^2}
  = O(1), \text{ as } b \nearrow \infty.\]
\label{EFF-BCYC}
\end{theorem}
\begin{proof}
  Since $\Zd (b)$ and $\Zr (b)$ are generated independently,
  \begin{align*}
    \textnormal{Var} \left[ Z (b)\right] = \textnormal{Var} \left[ \Zd
      (b)\right] + \textnormal{Var} \left[ \Zr (b)\right].
  \end{align*}
  This observation, together with \eqref{VAR-DOM-BCYC} and
  \eqref{VAR-RES-BCYC} proves the claim.
\end{proof}

\section{Numerical Experiments}
\label{SEC-NUM-EG}
In this section, we present the results of numerical simulation
experiments performed on examples previously considered in literature,
and compare them with the performance of our algorithms.

\subsection{Example 1 - Estimation of $\Pr\{S_n > b\}$}
Take $X = \Lambda R,$ where $\Pr \{ \Lambda > x\} = 1 \wedge x^{-4}, R
\sim \text{Laplace}(1),$ and $\Lambda$ is independent of $R.$ We use
$N = 10,000$ simulation runs to estimate $\Pr \{ S_n > n \}$ for $n =
100, 500 \text{ and } 1000.$ In Table \ref{NUM-RES-1}, we compare the
numerical estimates obtained by our simulation procedure with the true
values of $\Pr \{ S_n > n \}$ evaluated in \cite{MR2488534} via
inverse transform techniques; further, a comparison of performance of
our methodology with Algorithms 1 and 2 in \cite{MR2488534} (referred
to as BL1 and BL2) has also been presented. From the columns CV, CV of
BL1, and CV of BL2, it can be inferred that our state-independent
simulation procedures yield estimators with substantially lower
coefficient of variation throughout the range of values
considered. The state-dependent algorithms in comparison have been
proven to be strongly efficient. The numerical performance of our
algorithms in Table \ref{NUM-RES-1} just reflects the vanishing
relative error of the estimators (a notion stronger than strong
efficiency), which has been verified in Theorem \ref{EFF-LD}.
\begin{table} [htb!]
  \centering
 \caption{Numerical result for Example 1 - here Std. error denotes the
   standard deviation of the estimator of $\Pr \{ S_n > n\}$ based on
   10,000 simulation runs;
   CV denotes the empirically observed coefficient of variation}
 \begin{tabular}{l c p{2.0cm} c c p{1cm} p{1cm}}
   \hline
   n    &$\Pr \{ S_n > n\}$ &Estimate $(\hat{z})$ for $\Pr\{ S_n> n\}$&  Std. error & CV of $\hat{z}$  & CV of BL1& CV of BL2\\ \hline
   100  & 2.21$\times 10^{-5}$ & 2.17$\times 10^{-5}$ & 4.31$\times 10^{-7}$ & 1.97 & 10.3 & 4.7 \\
   500  & 1.04$\times 10^{-7}$ & 1.05$\times 10^{-7}$ & 6.91$\times 10^{-10}$ & 0.66 & 1.0 & 4.1\\
   1000 & 1.25$\times 10^{-8}$ & 1.29$\times 10^{-8}$ & 6.91$\times 10^{-11}$ & 0.53 & 1.1 & 3.8\\\hline
 \end{tabular}
 \label{NUM-RES-1}
 \end{table}

\subsection{Example 2 - Estimation of $\Pr\{ \tau_b < \infty\}$}
To facilitate comparison with existing methods, we use the following
example from \cite{MR2434174}: Consider an M/G/1 queue with traffic
intensity $\rho = 0.5$ and Pareto service times having tail $\Pr \{ V
> t\} = (1+t)^{-2.5}.$ The aim is to estimate the probability that
this queue develops a waiting time $b$ in stationarity by equivalently
estimating the level crossing probabilities $\Pr \{ \tau_b < \infty\}$
of the associated negative drift random walk. For this example, we use
the simulation procedures discussed in Section \ref{SEC-SIM-METH} and
compare the results with that of the existing algorithms in literature
in Table \ref{NUM-RES-2}. While Algorithms AK (in \cite{AK06}) and DLW
(in \cite{Dupuis:2007:ISS:1243991.1243995}) restrict the arrivals to
be Poisson, the schemes BGL, BG and BL referring to the algorithms,
respectively, in \cite{BGL2007, MR2434174} and \cite{Blanchet20122994}
do not impose any such restriction.

In our implementation, $r$ has been chosen to be 2 to keep the
expected termination time low, as suggested by Theorem \ref{GR2-RT}.
The results reported in Table \ref{NUM-RES-2} correspond to the
simulation estimates of $\Pr \{ \tau_b < \infty \}$ for values of
$b=10^2,10^3 \text{ and } 10^4$ using $N=10,000$ simulation runs. From
Table \ref{NUM-RES-2}, it can be inferred that the error offered by
the estimates of our simpler state-independent procedure is much
smaller when compared with other existing algorithms. Table
\ref{R-COMPAR} gives a comparison of coefficient of variation of the
estimators empirically observed for different values of $r,$ and a
fixed $b=10^3.$ It can be seen from Table \ref{R-COMPAR} as well that
choosing $r=2$ helps in keeping the relative error low.


\begin{table}[htb!]
  \centering
  \caption{Numerical result for Example 2 - here Std. error denotes the
    standard deviation of the estimator of $\Pr\{ \tau_b <
    \infty\}$ based on 10,000 simulation runs;
    CV denotes the empirically observed coefficient of variation}
  \begin{tabular}{l p{0.5cm} p{3cm}  p{3cm} p{3cm} }
    \hline
    Estimation & & & &\\
    Std. error& &$b = 10^2$& $b = 10^3$& $b = 10^4$\\
    CV& & & & \\
    \hline
       & & $9.75 \times 10^{-4}$& $3.15 \times 10^{-5}$& $9.98 \times 10^{-7}$\\
    Proposed& & $4.11 \times 10^{-6}$& $7.89 \times 10^{-8}$& $1.39 \times 10^{-9}$\\
     method& & 0.42& 0.25& 0.14\\
         & & $1.20 \times 10^{-3}$& $3.15 \times 10^{-5}$& $9.98 \times 10^{-7}$\\
    AK   & & $1.48 \times 10^{-5}$& $2.19 \times 10^{-7}$& $6.95 \times 10^{-9}$\\
         & & 1.23& 0.70& 0.70\\
         & & $1.05 \times 10^{-3}$& $3.16 \times 10^{-5}$& $9.91 \times 10^{-7}$\\
    DLW  & & $5.20 \times 10^{-6}$& $1.69 \times 10^{-7}$& $2.99 \times 10^{-9}$\\
         & & 0.50& 0.53& 0.30\\
         & & $1.02 \times 10^{-3}$& $3.17 \times 10^{-5}$& $1.13 \times 10^{-6}$\\
    BGL  & & $3.84 \times 10^{-5}$& $1.60 \times 10^{-6}$& $7.28 \times 10^{-8}$\\
         & & 3.76& 5.05& 6.44\\
         & & $1.08 \times 10^{-3}$& $3.15 \times 10^{-5}$& $9.98 \times 10^{-7}$\\
    BG   & & $5.97 \times 10^{-6}$& $9.73 \times 10^{-8}$& $2.07 \times 10^{-9}$\\
         & & 0.55& 0.31& 0.21\\
         & & $1.05 \times 10^{-3}$& $3.18 \times 10^{-5}$& $9.88 \times 10^{-7}$\\
    BL   & & $3.76 \times 10^{-5}$& $2.60 \times 10^{-7}$& $8.19 \times 10^{-9}$\\
         & & 3.58& 0.82& 0.83\\
    \hline
  \end{tabular}
  \label{NUM-RES-2}
 \end{table}
\begin{table}[htb!]
  \centering
 \caption{Comparison of relative errors for different choices of $r$ in
   Example 2 with $b=1000;$ here Std. error denotes the
   standard deviation of the estimator of $\Pr\{ \tau_b <
   \infty\}$ based on 10,000 simulation runs;
   CV denotes the empirically observed coefficient of variation}
  \begin{tabular}{l | c c c}
    \hline
    $r$  & Estimate            & Std. error          & CV  \\ \hline
    2  & 3.15$\times 10^{-5}$ & 7.89$\times 10^{-8}$ & 0.25\\
    10 & 3.16$\times 10^{-5}$ & 1.03$\times 10^{-7}$ & 0.33\\
    100& 3.16$\times 10^{-5}$ & 1.55$\times 10^{-7}$ & 0.49\\ \hline
 \end{tabular}
 \label{R-COMPAR}
 \end{table}

\section{Conclusion}
\label{SEC-CONC}

In this paper we revisited the problem of efficient simulation of
commonly encountered rare event probabilities associated with random
walks having regularly varying heavy-tailed increments.  These
comprised the large deviations probability of a random walk exceeding
large values as well as level crossing probabilities corresponding to
negative-drift random walks. In the existing literature there are
results that suggest that state-independent methods for such
probabilities are difficult to design. Significant research over the
last few years has resulted in sophisticated state-dependent
importance sampling techniques for estimating these probabilities. Our
key contribution has been to challenge this view by showing that
simple state-independent importance sampling methods, that are at
least as efficient as the existing state-dependent methods, can indeed
be devised to estimate these probabilities.

Our approach relied on partitioning the rare event of interest into
elementary events that are amenable to straight forward
state-independent importance sampling methods.  We expect that this
approach will generalize to more complex, multi-dimensional problems,
and for similar problems involving Weibull-type sub-exponential tail
distributions.

\appendix
\section{Proofs of certain probability estimates}
In this section we present proofs of Propositions \ref{LD-RES-PROB},
\ref{AK-ASYMP}, \ref{RES-BCYC-ASYMP} and Lemma
\ref{RES-TERM-ASYMP}. These asymptotic results on certain
probabilities of interest have been useful in efficiency analysis of
our algorithms. Some of these involve error estimates that have not
been studied in the literature, and are interesting in their own
right.

\paragraph{Proof of Proposition \ref{LD-RES-PROB}}
Let $M_n := \max\{X_1,\ldots,X_n\}$ for $n \geq 1.$ We first obtain a
lower bound for $\Pr \{ S_n > b, X_n > b, M_{n-1} \leq b\}:$ 
\begin{align}
  \Pr \{ S_n > b, X_n > b, M_{n-1} \leq b\} &\geq \Pr \left\{ S_{n-1}
    >
    -b^\gamma, M_{n-1} \leq b, X_n > b + b^\gamma \right\} \nonumber\\
  &= \Pr \left\{S_{n-1} > -b^\gamma, M_{n-1} \leq b \right\}
  \bar{F}\left(b+b^\gamma\right) \label{LB-LEM1-INTER}
\end{align}
for some $\gamma < 1$ to be chosen later in the proof. Due to
\eqref{MAX-BONF-INEQ},
\[\Pr \{M_{n-1} > b\} \sim (n-1)\bar{F}(b) \searrow 0\]
uniformly for all $b > n^{\beta+\epsilon},$ as $n \nearrow \infty.$
Here recall that $\beta := (\alpha \wedge 2)^{-1}.$ Similarly for
$\gamma > \beta/(\beta+\epsilon),$ because of the convergence of
$S_n/n^\beta$ to the stable distribution, we have $\Pr \{S_{n-1} <
-b^\gamma\} \searrow 0, $ uniformly for all $b > n^{\beta+\epsilon},$
as $n \nearrow \infty.$ Therefore, it follows from union bound that,
\[ \Pr \left\{S_{n-1} \geq -b^\gamma, M_{n-1} \leq b \right\} \geq 1 -
o(1),\] uniformly for all $b > n^{\beta+\epsilon},$ as $n \nearrow
\infty.$ Since $\gamma < 1,$ 
\[\frac{\bar{F}(b+b^\gamma)}{\bar{F}(b)} \geq 1-o(1)\] because of 
\eqref{LONG-TAIL-EXT}. Combining these observations with
\eqref{LB-LEM1-INTER}, it follows that
\begin{align}
  \Pr \{ S_n > b, X_n > b, M_{n-1} \leq b \} &\geq (1-o(1))\bar{F}(b)
  \label{LB-LEM1-INTER}
\end{align}
uniformly for all $b > n^{\beta+\epsilon},$ as $n \nearrow \infty.$\\
Since $\Pr(\Ar(n,b)) = \Pr\{ S_n > b\} - \Pr \{S_n > b, M_n > b \},$
\begin{align*}
  \Pr(\Ar(n,b)) &\leq \Pr\{ S_n > b\} - \sum_{j=1}^n \Pr \left\{ S_n >
    b, X_j > b, \max_{i \neq j, i \leq n} X_i \leq b\right\}\\
  & = \Pr\{ S_n > b\} - n\Pr \{ S_n > b, X_n > b, M_{n-1} \leq b \}\\
  &\leq (1+o(1))n\bar{F}(b)-(1-o(1))n\bar{F}(b) = o \left(n\bar{F}(b)
  \right),
\end{align*}
where the last inequality follows from \eqref{LD-PROBS} and
\eqref{LB-LEM1-INTER}.\hfill\(\Box\)

\paragraph{Proof of Proposition \ref{AK-ASYMP}}
The upper bound follows simply by applying union bound as below:
\begin{align}
  \Pr \{\evk, A_k\} &\leq \Pr
  \left\{\bigcup_{j=\nkm+1}^{n_k}\left\{X_j > b+j\mu\right\}
  \right\} \nonumber\\
  &\leq \sum_{j=\nkm+1}^{n_k} \bar{F}(b+j\mu) = q_k(b).
  \label{AK-ASYMP-UB}
\end{align}
For obtaining a lower bound, see that \[\Pr\{\evk, A_k\} =
\sum_{j=\nkm + 1}^{n_k}\Pr\{\tau_b=j,A_k\}\] is bounded from below by
\begin{align*}
  \sum_{j=\nkm + 1}^{n_k}\Pr \left\{\tau_b=j, S_i > -(b+i \mu)^\gamma
    \text{ for all } i < j, X_j > b + j\mu + (b+j \mu)^\gamma
  \right\},
\end{align*}
for some $\gamma < 1$ to be chosen later in this proof.  $\text{Let }
M_n := \max_{k \leq n}(S_k - k\mu) \text{ and } M := \sup_k (S_k -
k\mu).$ Then $\Pr\{\evk, A_k\}$ is lower bounded by
\begin{align}
  &\sum_{j=\nkm + 1}^{n_k}\Pr \left\{M_{j-1} \leq b, S_i > -(b+i
    \mu)^\gamma \text{ for all } i < j, X_j > b + j\mu + (b+j
    \mu)^\gamma \right\}\nonumber\\
  &\quad\hspace{-1pt}= \sum_{j=\nkm + 1}^{n_k}\Pr \left\{M_{j-1} \leq
    b, S_i > -(b+i \mu)^\gamma \text{ for all } i < j \right\}
  \bar{F}\left(b + j\mu + (b+j\mu)^\gamma\right)\nonumber\\
  &\quad\hspace{-1pt}\geq \Pr \left\{M \leq b, \min_{i < n_k} S_i >
    -(b+\nkm \mu)^\gamma \right\} \sum_{j=\nkm + 1}^{n_k}
  \bar{F}\left(b + j\mu + (b+j\mu)^\gamma\right) \label{AK-LB-INT1}
\end{align}
From \eqref{ASYMP} we have that $\Pr\{ M > b \} \sim \mu^{-1}
\bar{F}_I(b)$ as $b \nearrow \infty.$ Recall that $\beta = (\alpha
\wedge 2)^{-1}.$ If $\gamma > \beta,$ then under the lighter left tail
assumption formally stated in Assumption
\ref{LEFT-TAIL-LIGHTER-ASSUMP},
\[ \Pr\left\{ \min_{i < n_k} S_i < -(b+\nkm \mu)^\gamma \right\} =
O\left(n_k(b+\nkm \mu)^{-\frac{\gamma}{\beta} + o(1)} \right), \text{
  as } b \nearrow \infty.\] This follows from the well-known large
deviation asymptotic that
\[\Pr \left\{\max_{i \leq n} S_i > x \right\} \sim n\bar{F}(x)\]
uniformly for $x > n^{\beta+\epsilon};$ this can be found, for
example, in Theorem 2.2 of \cite{borovkovboxma} and Theorem 5 of
\cite{doi:10.1137/S0040585X97978877}. Therefore, by union bound,
\begin{align}
  \Pr &\left\{M \leq b,\min_{i < n_k} S_i > -(b+\nkm
    \mu)^\gamma \right\} \nonumber\\
  &\hspace{20pt}\geq 1 - \Pr\{M > b\} - \Pr\left\{ \min_{i < n_k} S_i
    < -(b+\nkm \mu)^\gamma \right\} \nonumber\\
  &\hspace{20pt}\geq 1 - \bar{F}_I(b)(1-o(1)) - O\left(n_k(b+\nkm
    \mu)^{-\frac{\gamma}{\beta} + o(1)} \right), \label{AK-LB-INT2}
\end{align}
as $b \nearrow \infty.$ Further because of \eqref{LONG-TAIL-EXT},
\begin{align*}
  \sum_{j=\nkm + 1}^{n_k} \bar{F}\left(b + j\mu +
    (b+j\mu)^\gamma\right) &\geq \sum_{j=\nkm + 1}^{n_k} \left(1 +
    \frac{(b+j\mu)^{\gamma}}{b+j\mu} \right)^{-\alpha+o(1)}
  \bar{F}\left(b + j\mu\right)\\
  &\geq \left( 1-\frac{c}{(b+\nkm \mu)^{1-\gamma}}\right) \sum_{j=\nkm
    + 1}^{n_k} \bar{F}\left(b + j\mu\right)
\end{align*}
for some positive constant $c.$ If we choose $\gamma =
(\alpha+1)/2\alpha,$ then
\[ \sum_{j=\nkm + 1}^{n_k} \bar{F}\left(b + j\mu +
  (b+j\mu)^\gamma\right) \geq \left( 1-\frac{c}{(b+\nkm
    \mu)^{\frac{\alpha-1}{2\alpha}}}\right)q_k(b).\] Combining this
with \eqref{AK-LB-INT1} and \eqref{AK-LB-INT2}, we see that
\begin{align}
  \Pr \{\evk,A_k\} \geq \left( 1-\frac{c+o(1)}{
      b^{\frac{\alpha-1}{2\alpha}}} \right)q_k(b),
\label{AK-LB-WITH-ERR-TERM}
\end{align}
as $b \nearrow \infty.$ Along with \eqref{AK-ASYMP-UB}, we have that 
\[ \sup_k \left|\frac{\Pr\{\evk,A_k\}}{q_k(b)}-1\right| = O\left(
  b^{-\frac{\alpha-1}{2\alpha}}\right), \text{ as } b \nearrow
\infty. \]
\hfill\(\Box\) 

\paragraph{Proof of Lemma \ref{RES-TERM-ASYMP}}
Recall that $n_k = r \nkm$ for some constant $r.$ Therefore, for any
$k \geq 1,$
\begin{align*}
  1 \leq
  \frac{\sum_{j=1}^{n_k}\bar{F}(b+j\mu)}{\sum_{j=1}^{\nkm}\bar{F}(b+j\mu)}
  = 1 +
  \frac{\sum_{j=\nkm+1}^{n_k}\bar{F}(b+j\mu)}{\sum_{j=1}^{\nkm}\bar{F}(b+j\mu)}
  \leq 1+\frac{\nkm \bar{F}(b+\nkm \mu)}{\nkm \bar{F}(b+\nkm \mu)} = 2.
\end{align*}
When $\text{Var}[X]< \infty,$ see from
\eqref{FINITE-HOR-ASYMP-FIN-VAR} and Proposition \ref{AK-ASYMP} that
$\Pr\{\evk,\bar{A}_k\}$ equals
\begin{align}
  &\Pr\{\tau_b \leq n_k\} - \Pr\{\tau_b \leq \nkm \} -
  \Pr\{\evk,A_k\} \nonumber\\
  &=
  \sum_{j=1}^{n_k}\bar{F}(b+j\mu)\left(1+O\left(\frac{1}{b}\right)\right)
  + o\left(\sqrt{n_k \wedge b} \ \bar{F}(b)\right) \nonumber\\
  &\hspace{40pt}-\sum_{j=1}^{\nkm}\bar{F}(b+j\mu)\left(1+O\left(\frac{1}{b}\right)\right)-\left(1-O\left(b^{-\frac{\alpha-1}{2\alpha}}
    \right)\right) \sum_{j=\nkm+1}^{n_k}\bar{F}(b+j\mu), \nonumber\\
  &= \sum_{j=1}^{n_k}\bar{F}(b+j\mu)\left(
    O\left(b^{-\frac{\alpha-1}{2\alpha}}\right)\right) +
  o\left(\sqrt{n_k \wedge b} \ \bar{F}(b)\right)
\label{FIN-VAR-LEM-PF}
\end{align}
as $b \nearrow \infty.$ Similarly when $\text{Var}[X] = \infty,$ for
$k$ such that $n_k\bar{F}(b) = o(1),$ see from
\eqref{FINITE-HOR-ASYMP-INF-VAR} and Proposition \ref{AK-ASYMP} that
$\Pr\{\evk,\bar{A}_k\}$ equals
\begin{align}
  \sum_{j=1}^{n_k}\bar{F}(b+j\mu)\left(1+O\left(\frac{n_k^{\frac{1}{\alpha}+\epsilon}}{b}\right)\right)
  - \sum_{j=1}^{\nkm}\bar{F}(b+j\mu)\left(1+O\left(\frac{\nkm
        ^{\frac{1}{\alpha}+\epsilon}}{b}\right)\right) \nonumber\\
  - \left(1-O\left(b^{-\frac{\alpha-1}{2\alpha}} \right)\right)
  \sum_{j=\nkm +1}^{n_k}\bar{F}(b+j\mu)\nonumber\\
  =\sum_{j=1}^{n_k}\bar{F}(b+j\mu)\left(O\left(\frac{n_k^{\frac{1}{\alpha}+\epsilon}}{b}\right)
    + O\left(b^{-\frac{\alpha-1}{2\alpha}} \right) \right)
  \label{INF-VAR-LEM-PF}
\end{align}
for every $\epsilon > 0.$ Since $n_k = r\nkm$ for some constant $r,$
it follows from \eqref{LONG-TAIL-EXT} that for small enough $\epsilon$
and suitably chosen $\eta > 1,$
\[ \sup_{k:n_k < b^\eta} \frac{b^{-\frac{\alpha-1}
    {2\alpha}}\sum_{j=1}^{n_k}\bar{F}(b+j\mu)}{q_k(b)} \leq
\sup_{k:n_k < b^\eta} \frac{b^{-\frac{\alpha-1}
    {2\alpha}}n_k\bar{F}(b)}{\nkm\bar{F}(b+n_k\mu)} = o(1), \]
\[ \sup_{k:n_k < b^\eta}\frac{n_k^{\frac{1}{\alpha}+\epsilon}}{b}
\frac{\sum_{j=1}^{n_k}\bar{F}(b+j\mu)}{q_k(b)} \leq \sup_{k:n_k <
  b^\eta}
\frac{n_k^{\frac{1}{\alpha}+\epsilon}}{b}\frac{n_k\bar{F}(b)}{\nkm\bar{F}(b+n_k\mu)}
= o(1), \text{ and }\]
\[\sup_{k:n_k < b^\eta} \frac{\sqrt{n_k\wedge b} \ 
  \bar{F}(b)}{\sum_{j=1}^{n_k}\bar{F}(b+j\mu)} \leq \sup_{k:n_k <
  b^\eta} \frac{\sqrt{n_k\wedge b} \ \bar{F}(b)}{n_k\bar{F}(b+n_k\mu)}
= o(1),\] as $b \nearrow \infty.$ Therefore from
\eqref{FIN-VAR-LEM-PF} and \eqref{INF-VAR-LEM-PF}, for some $\eta >
1,$
\begin{align}
  \sup_{k:n_k < b^\eta} \frac{\Pr\{\evk,\bar{A}_k\}}{q_k(b)} = o(1),
  \text{ as } b \nearrow \infty.
  \label{FIRST-PART}
\end{align}
For $k$ such that $n_k > b^\eta,$ we obtain a loose bound that
suffices for our purposes:
\begin{align}
  \Pr \{ \evk \} &= \Pr \left\{ \evk , S_{\nkm} > \frac{b+\nkm
      \mu}{2}\right\} \nonumber\\
  & \hspace{20pt}+ \Pr \left\{ \evk , S_{\nkm} \leq \frac{b+\nkm
      \mu}{2}\right\} \nonumber\\
  &\leq \Pr \left\{S_{\nkm} > \frac{b+\nkm \mu}{2}\right\} + \Pr
  \left\{  \tau_{_\frac{b+\nkm \mu}{2}} < \infty \right\} \nonumber\\
  &\leq (1+\epsilon) \nkm\bar{F}\left(\frac{b+\nkm \mu}{2}\right) +
  \frac{(1+\epsilon)}{\mu} \bar{F}_I\left(\frac{b+\nkm \mu}{2}\right),
  \label{INTER-EQ}
\end{align}
for all $k,b$ large enough. While the final inequality is due to
the asymptotics \eqref{LD-PROBS} and \eqref{ASYMP}, the second term in
the penultimate step follows by observing that whenever the event $\{
\evk , S_{\nkm} \leq (b+\nkm \mu)/2\}$ happens, the random walk $(Z_n:
n \geq 0)$ defined by
\[Z_n := S_{n+\nkm}-S_{\nkm} - n\mu\] crosses the level $(b + \nkm
\mu)/2$ at some finite $n \leq n_k-\nkm.$ Here recall that
$\tau_x:=\inf\{k \geq 1: S_k > x + k\mu \}.$\\
Further, since $n_k = r\nkm$ for some constant $r,$ from
\eqref{KARAMATA} and \eqref{LONG-TAIL-EXT}, we have that
\[ \sup_{k:n_k > b^\eta} \frac{\nkm\bar{F}\left(\frac{b+\nkm
      \mu}{2}\right)}{q_k(b)} < \infty \text{ and } \sup_{k:n_k >
  b^\eta} \frac{\bar{F}_I\left(\frac{b+\nkm \mu}{2}\right)}{q_k(b)} <
\infty.\] Therefore from \eqref{INTER-EQ},
\begin{align*}
  \sup_{k:n_k > b^\eta} \frac{\Pr\{\evk,\bar{A}_k\}}{q_k(b)} < \infty,
\end{align*}
which along with \eqref{FIRST-PART} establishes the
claim. \hfill\(\Box\) 

\paragraph{Proof of Proposition \ref{RES-BCYC-ASYMP}} 
Consider
\begin{align*}
  P_1 &:= \Pr \left\{ \tau_b < \tau, \max_{k \leq \tau_b} X_k \leq b,
    S_{\tau_b-1} < \frac{b}{2}\right\}\\
  &\leq \Pr \left\{ S_n \in \left(0,\frac{b}{2}\right), S_{n+1} > b,
    X_{n+1} < b \text{ for some }  n < \tau \right\}\\
  &\leq \E \left[ \sum_{n=0}^{\tau-1} \mathbb{I}\left( S_n \in
      \left(0,\frac{b}{2}\right), S_{n+1} > b, X_{n+1} < b
    \right)\right]\\
  &\leq \E \left[ \sum_{n=0}^{\tau-1} \mathbb{I}\left( S_n \in
      \left(0,\frac{b}{2}\right)\right) \Pr\left\{b-S_n < X < b
    \right\}\right].
\end{align*}
Let $\pi(B) = \Pr\{ \sup_n S_n \in B \}.$ Then according to the
regenerative ratio representation,
\[ \frac{1}{\E \tau}\E \left[ \sum_{n=0}^{\tau-1} \mathbb{I}\left( S_n
    \in \left(0,\frac{b}{2}\right)\right) \Pr\left\{b-S_n < X < b
  \right\}\right] = \int_0^{\frac{b}{2}} \Pr\left\{b-u < X < b
\right\} \pi(du).\] Therefore,
\begin{align*}
  P_1 &\leq \E \tau \int_0^{\frac{b}{2}} \left(\bar{F}(b-u) -
    \bar{F}(b)\right) \pi(du)\\
  &= \E \tau \bar{F}(b)\int_0^{\frac{b}{2}}
  \left(\frac{\bar{F}(b-u)}{\bar{F}(b)} - 1 \right) \pi(du). 
\end{align*}
From Potter's bounds (see \eqref{LONG-TAIL-EXT}), we have that for all
$u < \frac{b}{2},$
\[ \frac{\bar{F}(b-u)}{\bar{F}(b)} \leq
\left(1-\frac{u}{b}\right)^{-\alpha-\delta} \leq 1 + (\alpha + \delta)
2^{\alpha+\delta+1} \frac{u}{b},\] for any $\delta > 0$ and all $b$
large enough. The last inequality follows from Taylor's theorem. Hence
\begin{align*}
  P_1 \leq (\alpha + \delta) 2^{\alpha+\delta+1} \E \tau
  \frac{\bar{F}(b)}{b}\int_0^{\frac{b}{2}}u\pi(du).
\end{align*}
Recall that $\pi((x,\infty)) \sim \int_x^\infty\bar{F}(u)du.$ Since
$\alpha > 2,$ $\int_0^\infty u \pi(du) < \infty.$ Therefore,
\begin{align}
  \label{P1}
  P_1 = O \left( \frac{\bar{F}(b)}{b}\right), \text{ as } b \nearrow
  \infty. 
\end{align}
Now consider the complementary event $\left\{ \tau_b < \tau, \max_{k
    \leq \tau_b} X_k \leq b, S_{\tau_b-1} > b/2 \right\}:$
\begin{align*}
  P_2 &:= \Pr \left\{ \tau_b < \tau, \max_{k \leq \tau_b} X_k \leq b,
    S_{\tau_b-1} > \frac{b}{2}\right\}\\
  &= \sum_{n=1}^\infty \Pr \left\{ \tau > n, \max_{k \leq n}
    X_k \leq b,  S_{n-1} > \frac{b}{2}, \tau_b = n\right\}\\
  &= \sum_{n=1}^\infty \Pr \left\{ \tau > n, \max_{k \leq n} X_k \leq
    b, S_{n-1} > \frac{b}{2}, M_{n-1} \leq b, S_n >
    b\right\}\\
  &= \sum_{n=1}^\infty \int_{\frac{b}{2}}^b \Pr \left\{ \tau > n,
    \max_{k \leq n} X_k \leq b, S_{n-1} \in dy, M_{n-1} \leq b, S_n >
    b \right\}\\
  &\leq \sum_{n=1}^\infty \int_{\frac{b}{2}}^b \Pr \left\{ \tau > n,
    S_{n-1} \in dy, M_{n-1} \leq b \right\} F((b-y,b]),\\
\end{align*}
where the notation $F((x,y])$ stands for $\Pr\{x < X \leq y\}.$
Consider the taboo renewal function $H_x(\cdot)$ defined below:
\[ H_x(B) := \sum_{n=0}^\infty \Pr \left\{ \tau > n, M_n \leq x, S_n
  \in B \right\}. \] 
Then it is immediate that
\[ P_2 \leq \int_{\frac{b}{2}}^b H_b(dy) F((b-y,b]).\] From Theorem 2
of \cite{denisov2007}, given $\epsilon > 0,$ we have a $y_0$ large
enough such that, for all $x$ and $y$ with $y \in (y_0,x-y_0),$
\[ (1-\epsilon) \frac{\E \tau}{\mu} F((y,x])dy \leq H_x((y,y+dy)) \leq
(1+\epsilon) \frac{\E \tau}{\mu} F((y,x])dy.\] Therefore, for a fixed
$\epsilon > 0,$ we have
\[ H_{b+c}((y,y+dy)) \leq (1+\epsilon) \frac{\E \tau}{\mu}
F((y,b+c])dy \] in the interval $(b/2,b),$ for some constant $c$ and
all $b$ large enough. Since $H_b(\cdot) \leq H_{b+c}(\cdot),$
\begin{align*}
  P_2 &\leq (1+\epsilon) \frac{\E \tau}{\mu} \int_{\frac{b}{2}}^b
  F((y,b+c]) F((b-y,b])dy\\
  &= (1+\epsilon) \frac{\E \tau}{\mu} \int_0^{\frac{b}{2}}
  F((b-y,b+c]) F((y,b])dy\\
  &\leq (1+\epsilon) \frac{\E \tau}{\mu} \left(\bar{F}(b)
    \int_{0}^{\frac{b}{2}} \frac{F((b-y,b])}{\bar{F}(b)} F((y,b])dy +
    \int_{0}^{\frac{b}{2}}F((b,b+c])F((y,b])dy
  \right)\\
\end{align*}
For a fixed $\delta > 0,$ it follows from \eqref{LONG-TAIL-EXT} that
\begin{align*}
  \frac{F((b-y,b])}{\bar{F}(b)} &= \bar{F}(b) \left(
    \frac{\bar{F}(b-y)}{\bar{F}(b)} - 1\right)\\
  &\leq \bar{F}(b) \left( \left(1-\frac{y}{b}\right)^{-\alpha-\delta}
    -1\right)\\
  &\leq (\alpha+\delta)2^{\alpha+\delta+1} \frac{y}{b}\bar{F}(b)
\end{align*}
for all $y < b/2$ and $b$ large enough. The last inequality is a
consequence of Taylor's theorem. Then,
\begin{align}
  P_2 &\leq (1+\epsilon) \frac{\E \tau}{\mu} \left(
    (\alpha+\delta)2^{\alpha+\delta+1} \frac{\bar{F}(b)}{b}
    \int_{0}^{\frac{b}{2}} y\bar{F}(y) dy + F((b,b+c])
    \int_{0}^{\frac{b}{2}}F((y,b])dy \right).
  \label{P2-INTER}
\end{align}
Since $\bar{F}(\cdot)$ is regularly varying, given $\gamma > 0$ it
follows from \eqref{LONG-TAIL-EXT} that for all $b$ large enough,
\begin{align*}
  F((b,b+c]) &=
  \bar{F}(b)\left(1-\frac{\bar{F}(b+c)}{\bar{F}(b)}\right)\\
  &\leq \bar{F}(b)\left(1-\left(1+\frac{c}{b}\right)^{-\alpha-\gamma}\right)\\
  &\leq \bar{F}(b) \left((\alpha+\gamma)\frac{c}{b}\right).
\end{align*}
Therefore
\[F((b,b+c]) = O\left( \frac{\bar{F}(b)}{b}\right), \text{ as } b
\nearrow \infty.\] Further, when $\alpha > 2,$ 
\[\int_{0}^\infty \bar{F}(u)du < \infty \text{ and } \int_{0}^\infty
u\bar{F}(u)du < \infty.\] Using these in \eqref{P2-INTER}, we obtain
that for tails with regularly varying index $\alpha > 2,$
\begin{align}
  P_2 = O\left(\frac{\bar{F}(b)}{b} \right),
  \label{P2}
\end{align}
as $b \nearrow \infty.$ Therefore, from \eqref{P1} and \eqref{P2}, we
obtain
\[ \Pr \left\{ \tau_b < \tau, \max_{k \leq \tau_b} X_k < b \right\} =
P_1 + P_2 = O\left(\frac{\bar{F}(b)}{b} \right), \] as $b \nearrow
\infty.$ This proves the claim. \hfill$\Box$

\section{Proofs of other lemmas}
Here we present proofs of Lemmas \ref{LEM-LD}, \ref{NORM-TERM},
\ref{OTHER-TERM} and \ref{BCYC-NORM-TERM}. To prove Lemmas
\ref{LEM-LD}, \ref{NORM-TERM} and \ref{BCYC-NORM-TERM}, we need Lemmas
\ref{Rthx} and \ref{theta-rate}, which are stated and proved
below. The proof of Lemma \ref{Rthx} follows the lines of Theorem
4.1.2 of \cite{MR2424161}, where bounds for similar integrals have
been derived.
\begin{lemma}
  For any pair of sequences $\{x_n\},\{ \fin \}$ satisfying $x_n
  \nearrow \infty$ and $\phi _n x_n \nearrow \infty,$ the integral,
  \begin{align*}
    \int_{-\infty}^{x_n} e^{\fin x}F(dx) \leq 1 + c \fin^\kappa + e^{2
      \alpha }\bar{F}\left( \frac{2 \alpha}{\fin} \right) + e^{\fin
      x_n} \bar{F}(x_n) (1+ o(1)),
  \end{align*}
  as $n \nearrow \infty,$ for any $1 < \kappa < \alpha \wedge 2,$ and
  some constant $c$ which does not depend on $n$ and $b.$
  \label{Rthx}
\end{lemma}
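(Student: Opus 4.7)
The plan is to split the integral at the threshold $2\alpha/\fin$, handle the bulk $(-\infty,\,2\alpha/\fin]$ by a Taylor bound combined with the cancellation from $\E X = 0$, and the tail $[2\alpha/\fin,\,x_n]$ by integration by parts followed by a rescaling argument based on regular variation of $\bar F$.

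For the bulk, pick any $\kappa \in (1, \alpha \wedge 2)$, which is admissible since $\alpha > 1$. A direct check shows that $(e^y - 1 - y)/|y|^\kappa$ is bounded on $(-\infty, 2\alpha]$: it vanishes like $|y|^{2-\kappa}$ near the origin, is continuous on $[0, 2\alpha]$, and behaves like $|y|^{1-\kappa} \to 0$ as $y \to -\infty$ (here is where $\kappa > 1$ is used). Hence $e^{\fin x} \leq 1 + \fin x + c_\kappa |\fin x|^\kappa$ for $x \leq 2\alpha/\fin$. Integrating, using $\E X = 0$ to conclude $\int_{-\infty}^{2\alpha/\fin} x\, F(dx) = -\int_{2\alpha/\fin}^\infty x\, F(dx) \leq 0$, and invoking Assumption~\ref{LD-ASSUMP} together with $\kappa < \alpha$ to get $\E|X|^\kappa < \infty$, the bulk contribution is bounded by $1 + c\fin^\kappa$.

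For the tail, integration by parts (via $F(dx) = -d\bar F(x)$) gives
\[
\int_{2\alpha/\fin}^{x_n} e^{\fin x}\, F(dx) = -\,e^{\fin x_n}\bar F(x_n) \;+\; e^{2\alpha}\bar F\!\left(\tfrac{2\alpha}{\fin}\right) \;+\; \fin \int_{2\alpha/\fin}^{x_n} \bar F(x)\, e^{\fin x}\, dx.
\]
Two of the three terms already match the target, so the crux is to show that the last integral equals $e^{\fin x_n}\bar F(x_n)(1 + o(1))$. The substitution $v = \fin(x_n - x)$ rewrites it as $e^{\fin x_n}\bar F(x_n)\int_0^{\fin x_n - 2\alpha} \frac{\bar F(x_n - v/\fin)}{\bar F(x_n)}\, e^{-v}\, dv$, and since $\fin x_n \to \infty$, the ratio in the integrand converges pointwise to $1$ by the regular variation of $\bar F$.

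The main obstacle is producing a dominating function for this ratio that is integrable against $e^{-v}$ and uniform in $n$ up to the moving upper limit. On $\{v \leq \fin x_n/2\}$, Potter's bounds yield $\bar F(x_n - v/\fin)/\bar F(x_n) \leq C_\delta (1 - v/(\fin x_n))^{-(\alpha+\delta)} \leq C_\delta\, 2^{\alpha+\delta}$, so dominated convergence forces that piece to $\int_0^\infty e^{-v}dv = 1$. On the complementary range the ratio is at most $\bar F(2\alpha/\fin)/\bar F(x_n)$, which is at most polynomial in $\fin x_n$ by regular variation, while $e^{-v} \leq e^{-\fin x_n/2}$ decays exponentially, rendering this piece negligible. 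Combining the bulk and tail estimates and absorbing $-e^{\fin x_n}\bar F(x_n) + e^{\fin x_n}\bar F(x_n)(1+o(1))$ into $e^{\fin x_n}\bar F(x_n)(1+o(1))$ yields the stated bound.
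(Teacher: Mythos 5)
Your proposal follows essentially the same route as the paper: split at $2\alpha/\phi_n$, control the bulk by a Taylor-type bound exploiting $\E X = 0$, and handle the tail by integration by parts followed by the substitution $v = \phi_n(x_n - x)$ and dominated convergence. Two minor remarks. First, your restriction to $\kappa \in (1, \alpha \wedge 2)$ is genuinely needed for the pointwise inequality $e^y \le 1 + y + c_\kappa |y|^\kappa$ to hold on $(-\infty, 2\alpha]$, since the left side behaves like $|y|$ as $y \to -\infty$; this makes your bulk estimate cleaner (and, strictly, more careful) than the paper's stated elementary inequality $e^y \le 1 + y + |y|^\kappa e^y$, which as written does not hold for large negative $y$. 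The restriction to $\kappa > 1$ does not cover the full range $(0, \alpha\wedge 2)$ asserted in the lemma, but every downstream use (the choice \eqref{KAPPA-VAL} in Lemma \ref{theta-rate} and the $1+\delta$ in Lemma \ref{NORM-TERM}) picks $\kappa > 1$, so nothing is lost. Second, your verification that $\int_0^{\phi_n x_n - 2\alpha} \bar F(x_n - v/\phi_n)/\bar F(x_n)\, e^{-v}\, dv \to 1$ splits at $v = \phi_n x_n/2$ and uses Potter's bounds on the near part and crude polynomial-vs-exponential decay on the far part; the paper instead builds a single monotone majorant $e^{3\alpha u/(2\gamma)}$ from the log-derivative of $(1 - u/(\phi_n x_n))^{-3\alpha/2}$. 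Both yield an integrable dominating function and are equivalent in effect.
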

\begin{proof}
  We split the region of integration into $(-\infty, \gn]$ and $(\gn,
  x_n]$ for some constant $\gamma > 0$; the partition is such that the
  integrand stays bounded in the former region.\\
  Let $I_1 := \int_{-\infty}^{\gn} e^{\fin x} F(dx)$ and $I_2 :=
  \int_{\gn}^{x_n} e^{\fin x} F(dx).$\\
  For any $\kappa \in (1,2]$ and $y > 0,$ it is easily verified that
  \[ e^x \leq 1 + x + |x|^\kappa e^y, \quad x \in (-\infty,y].\]
  Therefore,
\begin{align}
  I_1 &\leq \int_{-\infty}^{\gn}\left(1 + \fin x + \fin^\kappa
    |x|^\kappa \exp(\fin \cdot \gn) \right) F(dx)
  \nonumber\\
  &\leq \int_{-\infty}^{\gn}F(dx) + \fin\int_{-\infty}^{\gn}xF(dx) +
  \fin ^\kappa \ega\int_{-\infty}^{\gn} |x|^\kappa F(dx)
  \nonumber\\
  &\leq \int_{-\infty}^{\infty}F(dx) + \fin
  \int_{-\infty}^{\infty}xF(dx) + \fin ^\kappa \ega
  \int_{-\infty}^{\infty} |x|^\kappa F(dx) \nonumber\\
  &= 1 + c \fin ^\kappa, \label{int-I1}
\end{align}
where $c := \ega \int_{-\infty}^{\infty} |x|^\kappa F(dx) < \infty$
because $\E |X|^\kappa < \infty;$ this follows because $\kappa <
\alpha$ and from Assumption \ref{LEFT-TAIL-LIGHTER-ASSUMP}. We have
also used $\E X = 0$ to arrive at \eqref{int-I1}.  Integrating by
parts for the second integral $I_2:$
\begin{align}
  I_2 &= -\int_{\gn}^{x_n} e^{\fin x} \bar{F}(dx) \nonumber\\
  &= e^{\fin \gn} \bar{F}\left(\frac{\gamma}{\phi_n}\right) - e^{\fin
    x_n}\bar{F}(x_n) + \fin
  \int_{\gn}^{x_n} e^{\fin x} \bar{F}(x)dx \nonumber\\
  &\leq \ega \bar{F}\left(\frac{\gamma}{\phi_n} \right) + I_2',
\label{int-I2}
\end{align}
where, $I_2' := \fin \int_{\gn}^{x_n} e^{\fin x}
\bar{F}(x) dx.$ Now the change of variable $u = \fin (x_n - x)$  results in:
\begin{align}
  I_2' &= e^{\fin x_n} \int_0^{\fin x_n - \gamma} e^{-u}\bar{F} \left(x_n -
    \frac{u}{\fin } \right) du \nonumber\\
  &= e^{\fin x_n} \bar{F}(x_n) \int_0^{\fin x_n - \gamma} e^{-u} g_n(u)
  du,
\label{int-I2'}
\end{align}
where,
\begin{equation*}
  g_n(u) := \frac{\bar{F} \left(x_n - \frac{u}{\fin }
    \right)}{\bar{F}(x_n)} = \frac{\bar{F} \left(x_n \left(1-
        \frac{u}{\fin x_n} \right) \right)}{\bar{F}(x_n)}.
\end{equation*}
Since $L(\cdot)$ is slowly varying and $\fin x_n \rightarrow \infty,$
given any $\delta > 0,$ it follows from \eqref{LONG-TAIL-EXT} that,
\begin{equation*}
(1- \delta) \left(1-\frac{u}{\fin x_n} \right) ^{-\alpha + \delta}
\leq g_n(u) \leq (1 + \delta )\left(1-\frac{u}{\fin x_n} \right) ^{-\alpha - \delta}.
\end{equation*}
for all $n$ large enough. So for any fixed $u,$ we have $g_n(u)
\rightarrow 1$ as $n \nearrow \infty.$ Now fix $\delta =
\frac{\alpha}{2}.$ Then for $n$ large enough,
\begin{equation}
  g_n(u) \leq \left( 1 + \frac{\alpha}{2} \right) \left( 1
    -\frac{u}{\fin x_n }  \right) ^{-\frac{3\alpha }{2}}.
\label{gn-bound}
\end{equation}
Let $h(u) = \left( 1 -u/\fin x_n \right) ^{-\frac{3\alpha
  }{2}}$. Since $\log h(0) = 0$ and $\frac{d}{du}\left( \log(h(u)
\right) \leq \frac{3 \alpha}{2 \gamma }$ for $0 \leq u \leq \fin x_n -
\gamma ,$ we have $h(u) \leq \exp({3 \alpha u}/{2 \gamma })$ on the
same interval. Therefore if we choose $\gamma = 2 \alpha,$ the
integrand in $I_2'$ is bounded for large enough $n$ by an integrable
function as below:
\begin{align*}
  \left| e^{-u}g_n(u)\mathbf{1}(0 \leq u \leq \fin x_n - \gamma)
  \right| &\leq \left| e^{-u} \left( 1 + \frac{\alpha }{2} \right)
    h(u)\mathbf{1}(0 \leq u \leq \fin x_n - \gamma) \right|\\
  &\leq \left( 1 + \frac{\alpha }{2} \right) e^{-u + \frac{3 \alpha
      u}{2 \gamma }} = \left( 1 + \frac{\alpha }{2} \right) e
  ^{-\frac{u}{4}}.
\end{align*}
Applying dominated convergence theorem, we get
\begin{equation*}
\int_0^{\fin x_n -
  \gamma} e^{-u} g_n(u)du \sim 1 \text{ as } n \nearrow \infty.
\end{equation*}
Since $\int_{-\infty}^{x_n} e^{\fin x}F(dx) = I_1 + I_2,$ combining
this result with \eqref{int-I1}, \eqref{int-I2} and \eqref{int-I2'},
completes the proof.
\end{proof}

\begin{lemma}
  Given any $\epsilon > 0,$ uniformly for $b > \thr,$ we have:
\begin{enumerate}[(a)]
\item $n \tnb^\kappa \searrow 0$ for some $1 < \kappa < \alpha \wedge
  2,$ and
\item $\bar{F}\left( 2 \alpha/\tnb \right) = o\left( {1}/{n}\right),$
  as $n \nearrow \infty.$
\end{enumerate}
\label{theta-rate}
\end{lemma}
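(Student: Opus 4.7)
Both parts rest on the identity $\tnb\,b = -\log(n\bar{F}(b))$ and the preliminary observation that $n\bar{F}(b) \to 0$ uniformly for $b > \thr.$ Indeed, $\bar{F}(b) = L(b)b^{-\alpha}$ combined with $\alpha(\beta+\epsilon) \geq 1 + \alpha\epsilon$ (which holds because $\alpha\beta \geq 1$ under $\beta = (\alpha\wedge 2)^{-1}$) yields $n\bar{F}(b) \leq L(b)\,n^{-\alpha\epsilon},$ and Potter bounds on the slowly varying $L(\cdot)$ then force $n\bar{F}(b) \to 0$ uniformly in the admissible range. In turn,
\[ -\log\bigl(n\bar{F}(b)\bigr) \;=\; \alpha\log b - \log n - \log L(b) \;=\; O(\log b) \]
uniformly as $n \nearrow \infty,$ and also $-\log(n\bar{F}(b)) \to \infty$ uniformly.

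\emph{Part (a).} From the above, $\tnb \leq C(\log b)/b$ for some absolute constant $C,$ and hence
\[ n\tnb^\kappa \;\leq\; C^\kappa\,\frac{n(\log b)^\kappa}{b^\kappa}. \]
For $n$ large, the map $b \mapsto n(\log b)^\kappa/b^\kappa$ is decreasing in $b$ on $(e,\infty),$ so its supremum over $b > \thr$ is attained at $b = \thr,$ giving
\[ \sup_{b > \thr} n\tnb^\kappa \;\leq\; C^\kappa\,((\beta+\epsilon)\log n)^\kappa\,n^{1-\kappa(\beta+\epsilon)}. \]
I would now choose any $\kappa$ with $1/(\beta+\epsilon) < \kappa < \alpha \wedge 2;$ this interval is nonempty precisely because $\epsilon > 0.$ With this choice, $\kappa(\beta+\epsilon) > 1,$ so the above bound vanishes, proving $n\tnb^\kappa \to 0$ uniformly for $b > \thr.$

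\emph{Part (b).} Since $b/(2\alpha/\tnb) = \tnb b/(2\alpha) = -\log(n\bar{F}(b))/(2\alpha) \to \infty$ uniformly, we have $2\alpha/\tnb \leq b$ for all sufficiently large $n,$ uniformly in $b > \thr.$ For any small $\delta > 0,$ Potter's bound for the regularly varying tail $\bar{F}$ then yields
\[ \frac{\bar{F}(2\alpha/\tnb)}{\bar{F}(b)} \;\leq\; (1+\delta)\left(\frac{\tnb\,b}{2\alpha}\right)^{\alpha+\delta}. \]
Writing $y := n\bar{F}(b)$ and multiplying by $n,$
\[ n\bar{F}(2\alpha/\tnb) \;\leq\; \frac{1+\delta}{(2\alpha)^{\alpha+\delta}}\;y\,(-\log y)^{\alpha+\delta}. \]
Since $y \to 0$ uniformly by the preliminary step and $y(-\log y)^{\alpha+\delta} \to 0$ as $y \to 0^+,$ the right-hand side tends to $0$ uniformly in $b > \thr,$ which is stronger than the claimed $\bar{F}(2\alpha/\tnb) = O(1/n).$

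The principal obstacle I anticipate is the uniformity over the unbounded range $b > \thr$: both the log-scale estimate $-\log(n\bar{F}(b)) = O(\log b)$ and the convergence $n\bar{F}(b) \to 0$ must hold uniformly, which requires Potter-type control of the slowly varying $L(\cdot).$ The essential structural gain is the cushion $\epsilon > 0$ built into $\thr,$ which produces the strict inequality $\kappa(\beta+\epsilon) > 1$ needed in part (a) and, via the same mechanism, guarantees the uniform decay of $n\bar{F}(b)$ used in part (b).
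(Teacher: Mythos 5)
Your proof follows essentially the same path as the paper's: for (a) you bound $n\tnb^\kappa$ via $\tnb b = -\log(\pn)$, substitute $b > \thr,$ and choose $\kappa$ with $\kappa(\beta+\epsilon) > 1$ and $\kappa < \alpha\wedge 2$; for (b) you invoke Potter-type bounds on the regularly varying $\bar{F}$ to compare $\bar{F}(2\alpha/\tnb)$ with $\bar{F}(b)$ and then observe the estimate reduces to $y(-\log y)^{\alpha+\delta} \to 0$ with $y := n\bar{F}(b)$. These are exactly the paper's moves. If anything, your treatment is slightly tighter on the uniformity over $b$: you explicitly note that $b \mapsto n(\log b)^\kappa/b^\kappa$ is decreasing so the supremum over $b > \thr$ sits at $b = \thr$, whereas the paper's displayed intermediate bound $n^{1-\kappa(\beta+\epsilon)}\log^\kappa\!\left(b^\alpha/(nL(b))\right)$ still depends on $b$ and implicitly needs the same monotonicity observation to conclude.

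One small wrinkle is in your preliminary step: the inequality $n\bar{F}(b) \leq L(b)\,n^{-\alpha\epsilon}$ leaves a dangling $L(b)$ that is not bounded uniformly over the unbounded range $b > \thr$ (e.g., take $L(x)=\log x$), and applying Potter bounds to $L$ \emph{after} this substitution cannot close the gap because they only give $L(b) \leq c_\eta b^\eta$ with $b$ still unbounded. The fix is to apply the Potter bound on $L$ \emph{before} substituting $b > \thr$,
\[
n\bar{F}(b) = nL(b)b^{-\alpha} \leq c_\eta\, n\, b^{-(\alpha-\eta)} \leq c_\eta\, n^{1-(\alpha-\eta)(\beta+\epsilon)} \longrightarrow 0,
\]
once $\eta$ is small enough that $(\alpha-\eta)(\beta+\epsilon) > 1$ (which is available since $\alpha(\beta+\epsilon) \geq 1 + \alpha\epsilon$). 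More simply, use that $\bar{F}$ is non-increasing, so
\[
n\bar{F}(b) \leq n\bar{F}(\thr) = L(\thr)\, n^{1-\alpha(\beta+\epsilon)} \leq L(\thr)\, n^{-\alpha\epsilon} \longrightarrow 0.
\]
With this patch the proof is complete and matches the paper's argument.
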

\begin{proof}
  (a) We have $\bar{F}(x) = x^{-\alpha }L(x).$ Since $L(\cdot)$ is
  slowly varying, following \eqref{LONG-TAIL-EXT} we have that $L(b) =
  b^{o(1)}$ as $b \nearrow \infty.$ Further noting that $b >
  n^{\beta+\epsilon}$ helps us to write:
\begin{align*}
  n \tnb ^\kappa &= \frac{n}{b^\kappa} \log^\kappa \left(
    \frac{1}{n\bar{F}(b)} \right) \leq n^{1-\kappa (\beta+\epsilon)}
  \log^\kappa \left( \frac{b^{\alpha}}{nL(b)} \right).
\end{align*}
If we choose $\kappa \in ((\beta + \epsilon)^{-1}, \alpha)$ then
$\kappa (\beta + \epsilon) > 1$ and subsequently $n \tnb^\kappa
\searrow 0$ as $n \nearrow \infty,$ uniformly for $b > \thr.$\\\\
(b) We have $\tnb := {-\log \left(n\bar{F}(b)\right)}/{b}.$ Therefore,
  \begin{equation*}
    n\bar{F}\left( \frac{2 \alpha}{\tn} \right) =
    n\bar{F}(b)\frac{\bar{F}\left(\frac{2 \alpha
          b}{-\log(n\bar{F}(b))} \right)}{\bar{F}(b)}.
  \end{equation*}
  Since $\bar{F}(\cdot)$ is regularly varying, given any $\delta > 0,$
  it follows from \eqref{LONG-TAIL-EXT} that
\begin{align*}
  \frac{\bar{F}\left(\frac{2 \alpha b}{-\log(n\bar{F}(b))}
    \right)}{\bar{F}(b)} &\leq \left( \frac{ -\log\left( \pn \right)
    }{2 \alpha } \right)^{\alpha + \delta },
\end{align*}
for $n$ large enough.  Therefore,
  \begin{align*}
    n\bar{F}\left( \frac{2 \alpha}{\tn} \right) \leq
    n\frac{L(b)}{b^{\alpha}}\left( \frac{ -\log\left( \pn \right) }{2
        \alpha } \right)^{\alpha + \delta } = o(1),
  \end{align*}
  uniformly for $b > \thr \text{ as } n \nearrow \infty.$ Here the
  convergence to 0 is justified because $\alpha > 1$ and $b > \thr.$
  \qedhere
\end{proof}

\paragraph{Proof of Lemma \ref{LEM-LD}} From the definition of
$\Lambda_b(\cdot)$ and Lemma \ref{Rthx}, we have:
\begin{align*}
  \exp\left( \Lambda_b(\tnb)\right) &= \int_{-\infty}^{b}
  \exp(\tnb x) F(dx)\\
  &\leq 1 + c \tnb^{\kappa} + e^{2 \alpha }\bar{F}\left( \frac{2
      \alpha}{\tnb} \right) + \exp({\tnb}) \bar{F}(b) (1+ o(1)),
\end{align*}
for $\kappa \in ((\beta + \epsilon)^{-1}, \alpha).$ Usage of Lemma
\ref{Rthx} is justified because $b\tnb = -\log\left(\pn\right)
\nearrow \infty.$ The last term,
\[ \exp(\tnb b)\bar{F}(b) = \frac{1}{\pn}\bar{F}(b) = \frac{1}{n}.\]
From Lemma \ref{theta-rate}, we have $n \tnb^{\kappa} =
o\left(1\right)$ and $\bar{F}\left( 2 \alpha/\tnb \right) =
o\left({1}/{n}\right),$ uniformly for $b > \thr.$ Therefore,
\[ \exp \left( \Lambda_b(\theta_n)\right) \leq 1 +
\frac{1}{n}\left(1+o(1)\right), \text{ as } n \nearrow \infty. \]
\hfill\(\Box\)

\paragraph{Proof of Lemma \ref{NORM-TERM}}
Consider $\theta : \mathbb{R}^+ \rightarrow \mathbb{R}^+.$ From Lemma
\ref{Rthx}, we have that: for given $\epsilon > 0,$ if $x\theta(x)
\nearrow \infty,$ then there exists $x_{\epsilon}$ such that for all
$x > x_{\epsilon},$
\[ \int_{-\infty}^{x} e^{\theta(x) u}F(du) \leq 1 + c
\theta^{1+\delta}(x) + e^{2 \alpha }\bar{F}\left( \frac{2
    \alpha}{\theta(x)} \right) + e^{\theta(x)x} \bar{F}(x)
(1+\epsilon),\] for some $\delta > 0.$ 
By definition of $\theta_k(b)$ in \eqref{THETA-K}, we have $(b+ \nkm
\mu)\cdot\theta_k(b) \nearrow \infty,$ either if $b$ or $k$ grows to
infinity. Writing $\theta_k(b)$ as $\theta_k,$ for values of $b$ and
$k$ satisfying $b+\nkm \mu > x_\epsilon,$ we have,
\begin{align*}
  \exp\left( \Lambda_k(\theta_k)\right) &\leq 1 + c
  \theta_k^{1+\delta}+ e^{2 \alpha }\bar{F}\left( \frac{2
      \alpha}{\theta_k} \right) + e^{\theta_k \cdot (b+\nkm \mu)}
  \bar{F}(b+ \nkm \mu) (1+\epsilon)\\
  &\leq \exp\left( c\theta_k^{1+\delta} + e^{2 \alpha }\bar{F}\left(
      \frac{2 \alpha}{\theta_k} \right) + \frac{1}{n_k}(1+\epsilon)
  \right),
\end{align*}
because $1+x \leq e^x$ and $e^{\theta_k\cdot(b+\nkm
  \mu)}\bar{F}(b+\nkm\mu) = 1/n_k.$ Then,
\begin{align}
  \exp\left( n_k \Lambda_k(\theta_k)\right) \leq \exp\left(
    cn_k\theta_k^{1+\delta} + e^{2 \alpha }n_k\bar{F}\left( \frac{2
        \alpha}{\theta_k} \right) + 1+\epsilon \right).
  \label{INT-1}
\end{align}
Also see that,
\begin{align}
  n_k \theta_k^{1+\delta} &= \frac{n_k}{(b+\nkm
    \mu)^{1+\delta}}\left(\log\left(\frac{1}{n_k\bar{F}(b+\nkm
        \mu)}\right)\right)^{1+\delta} < \epsilon,
  \label{INT-2}
\end{align}
if $b$ and $k$ are such that $(b+ \nkm \mu)$ is large
enough. Similarly for given $\delta > 0,$ there exists $x_\delta$ such
that if $b + \nkm \mu > x_\delta,$ then
\begin{align*}
  \frac{\bar{F}\left(\frac{2\alpha}{\theta_k}\right)}{\bar{F}(b+\nkm
    \mu)} &= \frac{\bar{F}\left(\frac{2\alpha(b+\nkm
        \mu)}{-\log\left({n_k\bar{F}(b+ \nkm
            \mu)}\right)}\right)}{\bar{F}(b+\nkm \mu)}\\
  &\leq \left( \frac{1}{2\alpha}\log\left( \frac{1}{n_k\bar{F}(b+ \nkm
        \mu)}\right)\right)^{\alpha + \delta}.
\end{align*}
Then for values of $b$ and $k$ such that $(b + \nkm \mu)$ is large
enough,
\begin{align*}
  n_k\bar{F}\left(\frac{2\alpha}{\theta_k}\right) &\leq n_k\bar{F}(b+
  \nkm \mu) \left( \frac{1}{2\alpha}\log\left( \frac{1}{n_k\bar{F}(b+
        \nkm \mu)}\right)\right)^{\alpha +
    \delta}\\
  &= \frac{n_kL(b+\nkm \mu)}{(b+\nkm \mu)^{\alpha}}\left(
    \frac{1}{2\alpha}\log\left( \frac{1}{n_k\bar{F}(b+ \nkm
        \mu)}\right)\right)^{\alpha + \delta} < \epsilon,
  \end{align*}
  because $\alpha > 1.$ Combining this with \eqref{INT-1} and
  \eqref{INT-2}, for $b$ and $k$ such that $b+ \nkm \mu$ is
  sufficiently large,
  \[ \exp\left( n_k \Lambda_k(\theta_k)\right) \leq
  \exp(1+3\epsilon), \] thus establishing the claim. \hfill\(\Box\)

\paragraph{Proof of Lemma \ref{OTHER-TERM}}
Since $n_k = r\nkm,$
\begin{align*}
  \sup_k \frac{n_k\bar{F}(b+ \nkm \mu)}{q_k(b)} &= \sup_k
  \frac{n_k\bar{F}(b+\nkm \mu)}{\sum_{j=\nkm+1}^{n_k}\bar{F}(b+j\mu)}\\
  &\leq \frac{n_k\bar{F}(b+ \frac{n_k}{r}
    \mu)}{(1-r^{-1})n_k\bar{F}(b+n_k\mu)} < \infty,
\end{align*}
because of \eqref{LONG-TAIL-EXT}.  \hfill\(\Box\)\\

\paragraph{Proof of Lemma \ref{BCYC-NORM-TERM}} 
Since $\theta_b b \nearrow \infty$ and $\E X \neq 0,$ similar to Lemma
\ref{Rthx}, we have:
\begin{align*}
  \exp \left(\Lambda_b(\theta_b) \right) &\leq 1 + \theta_b \E X + c
  \theta_b^2 + \exp(2 \alpha) \bar{F} \left( \frac{2\alpha}{\theta_b}
  \right) +  \exp \left( \theta_b b \right) \bar{F}(b)(1+o(1))\\
  &= 1 - \theta_b \mu + c \theta_b^2 + \exp(2 \alpha) \bar{F} \left(
    \frac{2\alpha}{\theta_b} \right) + \frac{1}{b}(1+o(1)).
\end{align*}
It follows from the definition of $\theta_b$ and a simple application
of \eqref{LONG-TAIL-EXT} that
\[ \frac{1}{b} = o \left(\theta_b \right) \text{ and } \bar{F} \left(
  \frac{2\alpha}{\theta_b} \right) = o \left(\theta_b \right), \text{
  as } b \nearrow \infty.\]
Therefore,
\[ \exp \left(\Lambda_b(\theta_b) \right) \leq 1 - \theta_b \mu
(1+o(1)),\] as $b \nearrow \infty.$ Then 
\[ \varlimsup_{b \rightarrow \infty} \sup_{ n \geq 1} \exp \left(n
  \Lambda_b(\theta_b) \right) \leq \inf_y \sup_n \sup_{b > y}
(1-\theta_b \mu (1+o(1)))^n \leq 1,\] which proves the
claim. \hfill$\Box$

\bibliographystyle{acmtrans-ims}
\bibliography{IS_RV_SI}

\end{document}